\newtheorem{thm}{Theorem}
\newtheorem{prop}{Proposition}
\newtheorem{lem}{Lemma}
\newtheorem{exam}{Example}
\newtheorem{cor}{Corollary}
\newtheorem{rem}{Remark}
\newtheorem{definition}{Definition}
\newtheorem{question}{Question}
\numberwithin{equation}{section}
\DeclareMathOperator{\rp}{\xrightarrow[]{rp}}
\DeclareMathOperator{\nc}{\xrightarrow[]{\lVert \cdot \rVert_X}}
\DeclareMathOperator{\wc}{\xrightarrow[]{w}}
\DeclareMathOperator{\unc}{\xrightarrow[]{un}}
\DeclareMathOperator{\uoc}{\xrightarrow[]{uo}}
\DeclareMathOperator{\upc}{\xrightarrow[]{up}}
\DeclareMathOperator{\oc}{\xrightarrow[]{o}}
\DeclareMathOperator{\pc}{\xrightarrow[]{p}}
\DeclareMathOperator{\norm}{\xrightarrow[]{\| \cdot \|}}
\begin{document}
	
\title{AMS Journal Sample}
	
\author{A. Ayd{\i}n$^{1,4}$, E. Yu. Emelyanov$^{1,2}$, N. Erkur\c{s}un \"Ozcan$^3$, M. A. A. Marabeh$^1$}

\address{$^{1}$ Department of Mathematics, Middle East Technical University, Ankara, 06800 Turkey.} 

\email{{aaydin.aabdullah@gmail.com} and {eduard@metu.edu.tr}}
\email{{mohammad.marabeh@metu.edu.tr} and {m.maraabeh@gmail.com}}

\address{$^{2}$ Sobolev Institute of Mathematics, Novosibirsk, 630090, Russia.}
\email{{emelanov@math.nsc.ru}}

\address{$^{3}$ Department of Mathematics, Hacettepe University, Ankara, 06800, Turkey.}
\email{{erkursun.ozcan@hacettepe.edu.tr}}

\address{$^{4}$ Department of Mathematics, Mu\c{s} Alparslan University, Mu\c{s}, 49250, Turkey.}
\email{{a.aydin@alparslan.edu.tr}}

\subjclass[2010]{47B07, 46B42, 46A40.}
\date{20.01.2017}
	
\keywords{compact operator, vector lattice, lattice-normed space, lattice-normed vector lattice, $up$-convergence, mixed-normed space.}
	
\title{Compact-Like Operators in Lattice-Normed Spaces}
	
\begin{abstract}
A linear operator $T$ between two lattice-normed spaces is said to be $p$-compact if, for any $p$-bounded net $x_\alpha$, 
the net $Tx_\alpha$ has a $p$-convergent subnet. $p$-Compact operators generalize several known classes of operators such as compact, 
weakly compact, order weakly compact, $AM$-compact operators, etc. Similar to $M$-weakly and $L$-weakly compact operators, 
we define $p$-$M$-weakly and $p$-$L$-weakly compact operators and study some of their properties. We also study $up$-continuous 
and $up$-compact operators between lattice-normed vector lattices.
\end{abstract}

\maketitle
	
\section{Introduction}
It is known that order convergence in vector lattices is not topological in general. Nevertheless, via order convergence, continuous-like operators (namely, order continuous operators) 
can be defined in vector lattices without using any topological structure. On the other hand, compact operators play an important role in functional analysis. 
Our aim in this paper is to introduce and study compact-like operators in lattice-normed spaces and in lattice-normed vector lattices by developing topology-free techniques. 

Recall that a net $(x_\alpha)_{\alpha\in A}$ in a vector lattice $X$ is order convergent (or $o$-convergent, for short) to $x\in X$, 
if there exists another net $(y_\beta)_{\beta\in B}$ satisfying $y_\beta \downarrow 0$, and for any $\beta\in B$, there exists $\alpha_\beta\in A$ 
such that $|x_\alpha-x|\leq y_\beta$ for all $\alpha\geq\alpha_\beta$. In this case we write $x_\alpha\oc x$. In a vector lattice $X$, 
a net $x_\alpha$ is unbounded order convergent (or $uo$-convergent, for short) to $x\in X$ if $|x_\alpha-x|\wedge u\oc 0$ for every $u\in X_+$; see \cite{GTX}. 
In this case we write $x_\alpha\uoc x$. In a normed lattice $(X,\left\|\cdot\right\|)$, a net $x_\alpha$ is unbounded norm convergent to $x\in X$, 
written as $x_\alpha\unc x$, if $\left\||x_\alpha-x|\wedge u\right\|\to 0$ for every $u\in X_+$; see \cite{DOT}. 
Clearly, if the norm is order continuous then $uo$-convergence implies $un$-convergence. Throughout the paper, all vector lattices are assumed to be real and Archimedean.

Let $X$ be a vector space, $E$ be a vector lattice, and $p:X\to E_+$ be a vector norm (i.e. $p(x)=0\Leftrightarrow x=0$, 
$p(\lambda x)=|\lambda|p(x)$ for all $\lambda\in\mathbb{R}$, $x\in X$, and $p(x+y)\leq p(x)+p(y)$ for all $x,y\in X$) then the triple $(X,p,E)$ 
is called a {\em lattice-normed space}, abbreviated as LNS. The lattice norm $p$ in an LNS $(X,p,E)$ is said to be {\em decomposable} if for all $x\in X$ and
$e_1,e_2\in E_+$, it follows from $p(x)=e_1+e_2$, that there exist $x_1,x_2\in X$ such that $x=x_1+x_2$ and $p(x_k)=e_k$ for $k=1,2$. If $X$ is a vector lattice, 
and the vector norm $p$ is monotone (i.e. $|x|\leq |y|\Rightarrow p(x)\leq p(y)$) then the triple $(X,p,E)$ is called a {\em lattice-normed vector lattice}, abbreviated as LNVL. In this article we usually use the pair $(X,E)$ or just $X$ to refer to an LNS  $(X,p,E)$ if there is no confusion.

We abbreviate the convergence $p(x_{\alpha}-x)\oc 0$ as $x_\alpha\pc x$ and say in this case that $x_\alpha$ $p$-converges to $x$. 
A net $(x_\alpha)_{\alpha \in A}$ in an LNS $(X,p,E)$ is said to be {\em $p$-Cauchy} if the net $(x_\alpha-x_{\alpha'})_{(\alpha,\alpha') \in A\times A}$ $p$-converges to $0$. 
An LNS $(X,p,E)$ is called (\textit{sequentially}) {\em $p$-complete} if every $p$-Cauchy (sequence) net in $X$ is $p$-convergent. In an LNS $(X,p,E)$ a subset $A$ of $X$ is called {\em $p$-bounded} if there exists $e\in E$ such that $p(a)\leq e$ for all $a\in A$. An LNVL $(X,p,E)$ is called {\em $op$-continuous} if $x_\alpha\oc 0$ implies that $p(x_\alpha)\oc 0$. 

A net $x_\alpha$ in an LNVL $(X,p,E)$ is said to be {\em unbounded $p$-convergent} to $x\in X$ (shortly, $x_\alpha$ $up$-converges to $x$ or  
$x_\alpha\upc x$), if $p(|x_\alpha-x|\wedge u)\oc 0$ for all $u\in X_+;$ see \cite[Def.6]{AEEM}.

Let $(X,p,E)$ be an LNS and $(E,\lVert\cdot\rVert_E)$ be a normed lattice. The \textit{mixed norm} on $X$ is defined by $p\text{-}\lVert x\rVert_E=\lVert p(x)\rVert_E$ for all $x\in X$. In this case the normed space $(X,p\text{-}\lVert\cdot\rVert_E)$ is called a \textit{mixed-normed space} (see, for example \cite[7.1.1, p.292]{K}).

A net $x_\alpha$ in an LNS $(X,p,E)$ is said to {\em relatively uniformly $p$-converge} to $x\in X$ (written as, $x_\alpha\rp x$) if there is $e\in E_+$ such that for any $\varepsilon>0,$ there is $\alpha_\varepsilon$ satisfying $p(x_\alpha-x)\leq \varepsilon e$ for all $\alpha\geq \alpha_\varepsilon.$ In this case we say that $x_\alpha$ $rp$-converges to $x$. A net $x_\alpha$ in an LNS $(X,p,E)$ is called \textit{$rp$-Cauchy} if the net $(x_\alpha-x_{\alpha'})_{(\alpha,\alpha') \in A\times A}$ $rp$-converges to $0.$
It is easy to see that for a sequence $x_n$ in an LNS $(X,p,E)$, $x_n\rp x$ iff there exist $e\in E_+$ and a numerical sequence $\varepsilon_k\downarrow 0$ such that for all $k\in  \mathbb{N}$ and there is $n_k\in  \mathbb{N}$ satisfying $p(x_n-x)\leq \varepsilon_ke$ for all $n\geq n_k.$ An LNS $(X,p,E)$ is said to be \textit{$rp$-complete} if every $rp$-Cauchy sequence in $X$ is $rp$-convergent. It should be noticed that in a $rp$-complete LNS every $rp$-Cauchy net is $rp$-convergent. Indeed, assume $x_\alpha$ is a $rp$-Cauchy net in a $rp$-complete LNS $(X,p,E)$. Then an element $e\in E_+$ exists such that, for all $n\in \mathbb{N},$ there is an $\alpha_n$ such that $p(x_{\alpha'}-x_\alpha)\leq \frac{1}{n}e$ for all $\alpha,\alpha'\geq\alpha_n$. We select a strictly increasing sequence $\alpha_n$. Then it is clear that $x_{\alpha_n}$ is $rp$-Cauchy sequence, and so there is $x\in X$ such that $x_{\alpha_n}\rp x$. Let $n_0\in\mathbb{N}$. Hence, there is $\alpha_{n_0}$ such that for all $\alpha\geq \alpha_{n_0}$ we have $p(x_{\alpha}-x_{\alpha_{n_0}}) \leq \frac{1}{n_0}e$ and, for all $n \geq n_0$ $p(x-x_{\alpha_{n_0}}) \leq \frac{1}{n_0}e$, from which it follows that $x_{\alpha}\rp x$.

We recall the following result (see for example \cite[7.1.2,p.293]{K}). If $(X,p,E)$ is an LNS such that $(E,\lVert\cdot\rVert_E)$ is a Banach space 
then $(X,p\text{-}\lVert\cdot\rVert_E)$ is norm complete iff the LNS $(X,p,E)$ is $rp$-complete. 
On the other hand, it is not difficult to see that if an LNS is sequentially $p$-complete then it is $rp$-complete. Thus, the following result follows readily.

\begin{lem}\label{mixednormisbanach}
Let $(X,p,E)$ be an LNS such that $(E,\lVert\cdot\rVert_E)$ is a Banach space. If $(X,p,E)$ is sequentially $p$-complete then $(X,p\text{-}\lVert\cdot\rVert_E)$ is a Banach space.
\end{lem}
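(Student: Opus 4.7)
The plan is to chain together the two facts the authors have already recorded just before the statement: (i) the equivalence from \cite[7.1.2]{K} that, when $(E,\lVert\cdot\rVert_E)$ is Banach, the mixed-normed space $(X,p\text{-}\lVert\cdot\rVert_E)$ is norm complete iff $(X,p,E)$ is $rp$-complete; and (ii) the implication ``sequentially $p$-complete $\Rightarrow$ $rp$-complete''. The first is invoked as a black box, so the only real work is to justify (ii).

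To verify (ii), I would start with an arbitrary $rp$-Cauchy sequence $(x_n)$ in $(X,p,E)$ and extract the witnessing element $e\in E_+$. The definition of $rp$-Cauchy gives, for every $k\in\mathbb{N}$, an index $n_k$ such that $p(x_n-x_m)\leq \tfrac{1}{k}e$ for $n,m\geq n_k$. Since $E$ is Archimedean, $\tfrac{1}{k}e\downarrow 0$, so $(x_n)$ is $p$-Cauchy; sequential $p$-completeness then yields $x\in X$ with $x_n\pc x$, i.e.\ $p(x_n-x)\oc 0$.

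The key step is upgrading $p$-convergence to $rp$-convergence with the same control $e$. Fix $k\in\mathbb{N}$ and $n\geq n_k$. For every $m\geq n_k$ the triangle inequality gives
\[
p(x_n-x)\leq p(x_n-x_m)+p(x_m-x)\leq \tfrac{1}{k}e+p(x_m-x).
\]
Holding $n$ and $k$ fixed and letting $m\to\infty$, I use that $p(x_m-x)\oc 0$ together with the order-limit principle (if $a\leq b_m$ eventually and $b_m\oc c$, then $a\leq c$, valid in any Archimedean vector lattice) to conclude $p(x_n-x)\leq \tfrac{1}{k}e$. As this holds for all $n\geq n_k$ and all $k$, we obtain $x_n\rp x$, so $X$ is $rp$-complete.

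I do not anticipate a serious obstacle: the only subtle point is the legitimacy of passing to order limits in the triangle-inequality estimate, and that is standard once one notes that both sides live in the Archimedean lattice $E$. With (ii) established, Lemma~\ref{mixednormisbanach} follows immediately from the cited characterization in \cite{K}.
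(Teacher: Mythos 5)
Your proposal follows exactly the route the paper takes: it invokes \cite[7.1.2]{K} for the equivalence of norm completeness of the mixed-normed space with $rp$-completeness, and reduces the lemma to the implication ``sequentially $p$-complete $\Rightarrow$ $rp$-complete,'' which the paper states without proof as ``not difficult to see.'' Your detailed verification of that implication (extracting the regulator $e$, passing to the order limit in $m$ to get $p(x_n-x)\leq\frac{1}{k}e$) is correct and simply fills in the details the paper omits.
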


Consider LNSs $(X,p,E)$ and $(Y,m,F)$. A linear operator $T:X\to Y$ is said to be \textit{dominated} if there is a positive operator $S:E\to F$ satisfying $m(Tx)\leq S(p(x))$ for all $x\in X$. In this case, 
$S$ is called a \textit{dominant} for $T$. The set of all dominated operators from $X$ to $Y$ is denoted by $M(X,Y)$. In the ordered vector space $L^\sim(E,F)$ of all order bounded operators from $E$ into $F$, 
if there is a least element of all dominants of an operator $T$ then such element is called the \textit{exact dominant} of $T$ and denoted by $\pmb{\lvert}T\pmb{\rvert}$; see \cite[4.1.1,p.142]{K}. 

By considering \cite[4.1.3(2),p.143]{K} and Kaplan's example \cite[Ex.1.17]{AB}, we see that not every dominated operator possesses an exact dominant. 
On the other hand if $X$ is decomposable and  $F$ is order complete then every dominated operator $T:X\to Y$ has an exact dominant $\pmb{\lvert}T\pmb{\rvert}$; see \cite[4.1.2,p.142]{K}. 

We refer the reader for more information on LNSs to \cite{BGKKKM,E,KK,K} and \cite{AEEM}. It should be noticed that the theory of lattice-normed spaces is well- developed 
in the case of decomposable lattice norms (cf. \cite{KK,K}). In \cite{C} and \cite{P} the authors studied some classes of operators in LNSs under the assumption that the lattice norms are decomposable.  
In this article, we usually do not assume lattice norms to be decomposable. 

Throughout this article, $L(X,Y)$ denotes the space of all linear operators between  vector spaces $X$ and $Y$. For normed spaces $X$ and $Y$ we use $B(X,Y)$ for the space of all norm bounded linear operators from $X$ into $Y$. We write $L(X)$ for $L(X,X)$ and for $B(X)$ for $B(X,X)$. If $X$ is a normed space then $X^*$ denotes the topological dual of $X$ and $B_X$ denotes the closed unit ball of $X.$ For any set $A$ of a vector lattice $X$, we denote by $sol(A)$ the solid hull of $A$, i.e. 
$sol(A)=\{x\in X:\lvert x\rvert\leq\lvert a\rvert \ \text{for some}\ a\in A\}$.

The following standard fact will be used throughout this article.
\begin{lem}\label{subseq criterion}
Let $(X,\lVert\cdot\rVert)$ be a normed space. Then $x_n\norm x$ iff for any subsequence $x_{n_k}$ there is a further subsequence $x_{n_{k_j}}$ such that $x_{n_{k_j}}\norm x$.
\end{lem}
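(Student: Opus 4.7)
The plan is to prove the two implications separately, with the forward direction being essentially trivial and the reverse direction handled by a standard contradiction argument.

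For the forward direction, suppose $x_n \norm x$. If $x_{n_k}$ is any subsequence, then $x_{n_k} \norm x$ as well (since $\|x_{n_k} - x\| \to 0$ as a subsequence of a convergent real sequence), and I may simply take the further subsequence $x_{n_{k_j}} = x_{n_k}$ to fulfill the requirement.

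For the reverse direction, I would argue by contrapositive: assume that $x_n$ does not converge to $x$ in norm. Then the real sequence $\|x_n - x\|$ does not converge to $0$, so there exists $\varepsilon > 0$ and a subsequence $x_{n_k}$ such that $\|x_{n_k} - x\| \geq \varepsilon$ for every $k$. Any further subsequence $x_{n_{k_j}}$ still satisfies $\|x_{n_{k_j}} - x\| \geq \varepsilon$ and therefore cannot norm-converge to $x$. This exhibits a subsequence with no norm-convergent (to $x$) further subsequence, negating the right-hand side of the biconditional.

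Since both parts rely only on the elementary fact that a real sequence of nonnegative numbers converges to $0$ iff every subsequence has a further subsequence tending to $0$, there is no substantial obstacle. The only thing to be careful about is stating the contrapositive correctly, namely recognizing that ``$\|x_n-x\|\not\to 0$'' yields, by the Bolzano--Weierstrass-free definition of non-convergence in $\mathbb{R}$, a uniformly bounded-below subsequence rather than merely a non-convergent one.
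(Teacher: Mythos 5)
Your proof is correct and is the standard argument for this fact; the paper itself states the lemma without proof, citing it as a well-known standard result, and your contrapositive argument for the reverse direction (extracting a subsequence bounded away from $x$ by some $\varepsilon>0$) is exactly the expected reasoning.
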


The structure of this paper is as follows. In section 2, we recall definitions of $p$-continuous and $p$-bounded operators between LNSs. We study the relation between $p$-continuous operators 
and norm continuous operators acting in mixed-normed spaces; see Proposition \ref{seqpcontimpliesnormcont} and Theorem \ref{normcontimpliespcont}. We show that every $p$-continuous operator is $p$-bounded. 
We end this section by giving a generalization of the fact that {\em any positive operator from a Banach lattice into a normed lattice is norm bounded} in Theorem \ref{positivityandboundedness}.

In section 3, we introduce the notions of $p$-compact and sequentially $p$-compact operator between LNSs. These operators generalize several known classes of operators such as compact, weakly compact, 
order weakly compact, and $AM$-compact operators; see Example \ref{examplesofp-compact}. Also the relation between sequentially $p$-compact operators and compact operators acting in mixed-normed spaces are investigated; 
see Propositions \ref{compactimpliesseq.p-compact} and \ref{seq.p-compactimpliescompact}. Finally we introduce the notion of a $p$-semicompact operator and study some of its properties.

In section 4, we define $p$-$M$-weakly and $p$-$L$-weakly compact operators which correspond respectively to $M$-weakly and $L$-weakly compact operators. Several properties of these operators are investigated.

In section 5, the notions of (sequentially) $up$-continuous and (sequentially) $up$-compact operators acting between LNVLs, are introduced. Composition of a sequentially $up$-compact operator 
with a dominated lattice homomorphism is considered in Theorem \ref{sequentially $up$-compactness}, Corollary \ref{rangeisup-regular}, and Corollary \ref{idealisup-regular}.

\section{$p$-Continuous and $p$-Bounded Operators}
In this section we recall the notion of a $p$-continuous operator in an LNS which generalizes the notion of order continuous operator in a vector lattice. 

\begin{definition}
Let $X$, $Y$ be two LNSs and $T\in L(X,Y)$. Then
\begin{enumerate}
\item[(1)] $T$ is called {\em $p$-continuous} if $x_\alpha\pc 0$ in $X$  implies $Tx_\alpha\pc 0$ in $Y$. 
If the condition holds only for sequences then $T$ is called {\em sequentially $p$-continuous}.
\item[(2)] $T$ is called {\em $p$-bounded} if it maps $p$-bounded sets in $X$ to $p$-bounded sets in $Y$.
\end{enumerate}
\end{definition}
\noindent\

\begin{rem}\ \
\begin{enumerate}
\item[(i)] The collection of all $p$-continuous operators between LNSs is a vector space.
\item[(ii)] Using $rp$-convergence one can introduce the following notion:\\
 A linear operator $T$ from an LNS $(X,E)$ into another LNS $(Y,F)$ is called \textit{$rp$-continuous} if $x_\alpha\rp 0$ in $X$  implies $Tx_\alpha\rp 0$ in $Y$. But this notion is not that interesting because it coincides with $p$-boundedness of an operator $($see \cite[Thm. 5.3.3 (a) ]{BGKKKM}$).$     
\item[(iii)] A $p$-continuous (respectively, sequentially $p$-continuous ) operator between two LNSs is also known as $bo$-continuous (respectively, sequentially $bo$-continuous)  see e.g. \cite[4.3.1,p.156]{K}.

\item[(iv)] Let $(X,E)$ be a decomposable LNS and let $F$ be an order complete vector lattice. 
Then $T\in M_n(X,Y)$ iff its exact dominant $\pmb{\lvert} T \pmb{\rvert}$ is order continuous \cite[Thm.4.3.2]{K}, where $M_n(X,Y)$ denotes the set of all dominated $bo$-continuous operators from $X$ to $Y$.
\item[(v)] Every dominated operator is $p$-bounded. The converse not need be true, for example consider 
the identity operator $I:(\ell_\infty,\lvert \cdot \rvert,\ell_\infty)\to(\ell_\infty,\lVert\cdot\rVert,\mathbb{R})$. It is $p$-bounded but not dominated $($see \cite[Rem.,p.388]{BGKKKM}$)$.
\end{enumerate}
\end{rem}

\noindent

Next we illustrate $p$-continuity and $p$-boundedness of operators in particular LNSs.

\begin{exam}\ \
\begin{enumerate} 
\item[(i)] Let $X$ and $Y$ be vector lattices then $T\in L(X,Y)$ is $($$\sigma$-$)$ order continuous iff $T:(X,\lvert\cdot\rvert,X)\to(Y,\lvert\cdot\rvert,Y)$ is $($sequentially$)$ $p$-continuous.
\item[(ii)] Let $X$ and $Y$ be vector lattices then $T\in L^\sim(X,Y)$ iff  $T:(X,\lvert \cdot \rvert,X)\to(Y,\lvert \cdot \rvert,Y)$ is $p$-bounded.
\item[(iii)] Let $(X,\lVert\cdot\rVert_X)$ and $(Y,\lVert\cdot\rVert_Y)$ be normed spaces then $T\in B(X,Y)$ iff $T:(X,\lVert \cdot \rVert_X,\mathbb{R})\to(Y,\lVert \cdot \rVert_Y,\mathbb{R})$ is $p$-continuous iff 
$T:(X,\lVert\cdot\rVert_X,\mathbb{R})\to(Y,\lVert\cdot\rVert_Y,\mathbb{R})$ is $p$-bounded.
\item[(iv)] Let $X$ be a vector lattice and $(Y,\lVert\cdot\rVert_Y)$ be a normed space. Then $T\in L(X,Y)$ is called \textit{order-to-norm continuous} 
if $x_\alpha\oc 0$ in $X$ implies $Tx_\alpha\xrightarrow{\lVert\cdot\rVert_Y} 0$, see \cite[Sect.4,p.468]{MMP}. 
Therefore, $T:X\to Y$ is order-to-norm continuous iff $T:(X,\lvert\cdot\rvert,X)\to(Y,\lVert\cdot\rVert_Y,\mathbb{R})$ is $p$-continuous.
\end{enumerate}
\end{exam}

\begin{lem}
Given an $op$-continuous LNVL $(Y,m,F)$ and a vector lattice $X$. If $T:X\to Y$ is $($$\sigma$-$)$ order continuous then $T:(X,\lvert\cdot\rvert,X)\to(Y,m,F)$ is $($sequentially$)$ $p$-continuous.
\end{lem}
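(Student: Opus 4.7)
The plan is to unpack both sides of the claimed implication in terms of the underlying order/lattice-valued convergences and then chain the two hypotheses together. First I would note that for the LNS $(X,\lvert\cdot\rvert,X)$, $p$-convergence is literally order convergence in $X$: $x_\alpha\pc 0$ means $\lvert x_\alpha\rvert=p(x_\alpha-0)\oc 0$ in $X$, which is exactly the definition of $x_\alpha\oc 0$. Similarly, $Tx_\alpha\pc 0$ in $(Y,m,F)$ unfolds to $m(Tx_\alpha)\oc 0$ in $F$.

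With these translations in hand, the argument is a two-step chase. Start with an arbitrary net $x_\alpha\pc 0$ in $(X,\lvert\cdot\rvert,X)$, i.e., $x_\alpha\oc 0$ in $X$. By the assumed order continuity of $T:X\to Y$, we get $Tx_\alpha\oc 0$ in $Y$. Now invoke the $op$-continuity hypothesis on $(Y,m,F)$: any order-null net in $Y$ has $m$-image that is order-null in $F$. Hence $m(Tx_\alpha)\oc 0$ in $F$, which is exactly $Tx_\alpha\pc 0$ in $(Y,m,F)$. This establishes $p$-continuity of $T:(X,\lvert\cdot\rvert,X)\to(Y,m,F)$.

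For the parenthetical ``sequentially'' version, the same chain of implications works verbatim once every net is replaced by a sequence: $\sigma$-order continuity of $T$ handles the first step, while $op$-continuity of $(Y,m,F)$ (applied to sequences) handles the second step, since $op$-continuity for nets trivially restricts to sequences.

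There is really no obstacle here; the lemma is almost a tautology once one recognises that $p$-convergence in $(X,\lvert\cdot\rvert,X)$ coincides with order convergence in $X$. The only thing worth double-checking is that the definition of $op$-continuity of $(Y,m,F)$ given earlier in the paper (that $y_\alpha\oc 0$ in $Y$ implies $m(y_\alpha)\oc 0$ in $F$) is exactly the hypothesis consumed in the second step, so no auxiliary lemma is required.
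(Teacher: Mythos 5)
Your argument is correct and is exactly the paper's proof: unwind $p$-convergence in $(X,\lvert\cdot\rvert,X)$ as order convergence, apply the order continuity of $T$, then apply the $op$-continuity of $(Y,m,F)$, with the sequential case handled verbatim. Nothing is missing.
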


\begin{proof}
Assume that $X\ni x_\alpha\pc 0$ in $(X,\lvert\cdot\rvert,X)$ then $x_\alpha\oc 0$ in $X$. Thus, $Tx_\alpha\oc 0$ in $Y$ as $T$ is order continuous. 
Since $(Y,m,F)$ is $op$-continuous then $m(Tx_\alpha)\oc 0$ in $F$. Therefore, $Tx_\alpha\pc 0$ in $Y$ and so $T$ is $p$-continuous.

The sequential case is similar.
\end{proof}

\begin{prop}\label{Toc}
Let $(X,p,E)$ be an $op$-continuous LNVL, $(Y,m,F)$ be an LNVL and $T:(X,p,E)\to(Y,m,F)$ be a $($sequentially$)$ $p$-continuous positive operator. 
Then $T:X\to Y$ is $($$\sigma$-$)$ order continuous.
\end{prop}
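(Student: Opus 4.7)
The plan is to reduce order continuity of the positive operator $T$ to the statement ``$y_\beta\downarrow 0$ in $X$ implies $Ty_\beta\downarrow 0$ in $Y$,'' and then to verify this reduced statement by transporting the convergence through the vector norms $p$ and $m$.

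For the reduction, suppose $x_\alpha\oc 0$ in $X$, so there is $y_\beta\downarrow 0$ with $|x_\alpha|\le y_\beta$ eventually. Positivity of $T$ gives $|Tx_\alpha|\le T|x_\alpha|\le Ty_\beta$ on the same tail, so once $Ty_\beta\downarrow 0$ is established we immediately get $Tx_\alpha\oc 0$ with $Ty_\beta$ as the order majorant.

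For the reduced statement, fix $y_\beta\downarrow 0$ in $X$. Then $(Ty_\beta)$ is a decreasing net in $Y_+$, so it suffices to show that $\inf_\beta Ty_\beta=0$. Take any $z\in Y$ with $0\le z\le Ty_\beta$ for every $\beta$. By $op$-continuity of $(X,p,E)$ we have $p(y_\beta)\oc 0$ in $E$, i.e.\ $y_\beta\pc 0$. The $p$-continuity of $T$ then yields $Ty_\beta\pc 0$ in $Y$, i.e.\ $m(Ty_\beta)\oc 0$ in $F$. Monotonicity of $m$ in the LNVL $(Y,m,F)$ gives $0\le m(z)\le m(Ty_\beta)$ for every $\beta$; choosing a majorant $f_\gamma\downarrow 0$ in $F$ with $m(Ty_\beta)\le f_\gamma$ eventually, the constant $m(z)$ satisfies $m(z)\le f_\gamma$ for every $\gamma$, hence $m(z)=0$ and thus $z=0$. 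Therefore $Ty_\beta\downarrow 0$, and combined with the reduction $T$ is order continuous.

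The sequential statement is proved by the same argument verbatim, with sequences in place of nets and sequential $p$-continuity of $T$ invoked in place of $p$-continuity. There is no substantial obstacle here; the only slightly delicate point---pulling the constant $m(z)$ beneath an eventually arbitrarily small net in $F$---is immediate from the definition of order convergence.
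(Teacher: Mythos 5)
Your proof is correct and follows essentially the same route as the paper: pass to a decreasing net, use $op$-continuity of $X$ to get $y_\beta\pc 0$, apply ($p$-)continuity of $T$ to get $m(Ty_\beta)\oc 0$, and conclude $Ty_\beta\downarrow 0$ from positivity and monotonicity of $m$. The only difference is cosmetic: you spell out the reduction from $x_\alpha\oc 0$ to decreasing nets and prove inline the fact that a decreasing net whose lattice norms order-converge to zero must decrease to zero, whereas the paper cites this as Proposition~1 of the reference \cite{AEEM}.
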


\begin{proof}
We show only the order continuity of $T$, the sequential case is analogous.
Assume $x_\alpha\downarrow 0$ in $X$. Since $X$ is $op$-continuous then $p(x_\alpha)\downarrow 0$. Hence, $x_\alpha\pc 0$ in $X$. By the $p$-continuity of $T$, 
we have $m(Tx_\alpha)\oc 0$ in $F$. Since $0\leq T$ then $Tx_\alpha\downarrow$. Also we have $m(Tx_\alpha)\oc 0$, so it follows from \cite[Prop.1]{AEEM} that $Tx_\alpha\downarrow 0$. 
Thus, $T$ is order continuous.
\end{proof}

\begin{cor}
Let $(X,p,E)$ be an $op$-continuous LNVL, $(Y,m,F)$ be an LNVL such that $Y$ is order complete. If $T:(X,p,E)\to(Y,m,F)$ is $p$-continuous and $T\in L^\sim (X,Y)$ then $T:X\to Y$ is order continuous.
\end{cor}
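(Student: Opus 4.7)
The plan is to reduce to the positive case of Proposition \ref{Toc} via the Riesz decomposition. Since $T\in L^\sim(X,Y)$ and $Y$ is order complete, the positive and negative parts $T^+,T^-\in L^\sim(X,Y)$ and the modulus $|T|=T^++T^-$ all exist, with $T=T^+-T^-$. If I can establish that both $T^+$ and $T^-$ are $p$-continuous, then Proposition \ref{Toc} applied to each positive $p$-continuous operator yields their order continuity, and hence $T=T^+-T^-$ is order continuous as a difference of two order continuous operators.

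A convenient route is to prove first that $|T|$ is $p$-continuous; once this is known, $p$-continuity of $T^{\pm}$ follows because $0\leq T^{\pm}\leq |T|$ and $p$-continuity is inherited from a dominating positive $p$-continuous operator. Indeed, if $0\leq S\leq R$ with $R$ positive and $p$-continuous, then given $x_\alpha\pc 0$ one may reduce to $x_\alpha\geq 0$ using $p(|x_\alpha|)=p(x_\alpha)\oc 0$ together with $|Sx|\leq S|x|$ and the monotonicity of $m$ in the LNVL $Y$; then $0\leq Sx_\alpha\leq Rx_\alpha$ and the monotonicity of $m$ force $m(Sx_\alpha)\leq m(Rx_\alpha)\oc 0$. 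Granted the $p$-continuity of the positive operator $|T|$, Proposition \ref{Toc} upgrades it to order continuity, so for any $x_\alpha\downarrow 0$ in $X$ one has $|T|x_\alpha\downarrow 0$ in $Y$; the bound $|Tx_\alpha|\leq |T|x_\alpha$ then immediately delivers $Tx_\alpha\oc 0$, which is exactly the order continuity of $T$.

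The main obstacle is thus establishing the $p$-continuity of $|T|$. The natural tool is the Riesz--Kantorovich formula $|T|x=\sup\{Ty_1-Ty_2 : 0\leq y_1,y_2\leq x\}$ for $x\geq 0$, whose supremum exists in $Y$ by order completeness. Given $x_\alpha\pc 0$, reduced to $x_\alpha\geq 0$, any admissible $y_i^{(\alpha)}\in[0,x_\alpha]$ satisfies $p(y_i^{(\alpha)})\leq p(x_\alpha)\oc 0$ by monotonicity of $p$, so $Ty_i^{(\alpha)}\pc 0$ in $Y$ by the $p$-continuity of $T$. The delicate step is to lift this pointwise information to $m$-control of the supremum, i.e.\ to establish $m(|T|x_\alpha)\oc 0$ in $F$; this is the $p$-analogue of the classical fact that the order-continuous order-bounded operators form an order ideal of $L^\sim(X,Y)$, and the intended proof would approximate the Riesz--Kantorovich supremum by finite suprema of admissible differences $Ty_1-Ty_2$, exploiting the upward directed structure of this enlarged index set together with the order completeness of $Y$ and the monotonicity of $m$.
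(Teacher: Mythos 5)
Your overall strategy---write $T=T^{+}-T^{-}$ by the Riesz--Kantorovich theorem and feed the positive parts into Proposition \ref{Toc}---is exactly the paper's strategy; the paper's entire proof consists of those two steps. The difference is that you correctly observe that Proposition \ref{Toc} requires the positive operator to be $p$-continuous, so one must first show that $T^{+}$ and $T^{-}$ inherit $p$-continuity from $T$; the paper applies Proposition \ref{Toc} to $T^{\pm}$ without comment, implicitly taking this for granted.

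The genuine gap is that you never actually establish the $p$-continuity of $\lvert T\rvert$: your final paragraph only describes what ``the intended proof would'' do. The reduction of $T^{\pm}$ to $\lvert T\rvert$ by domination is sound (if $0\leq S\leq R$ with $R$ positive and $p$-continuous, then $m(Sx_\alpha)\leq m(S\lvert x_\alpha\rvert)\leq m(R\lvert x_\alpha\rvert)\oc 0$ whenever $x_\alpha\pc 0$, since $p(\lvert x_\alpha\rvert)=p(x_\alpha)$). But the passage from ``$Ty\pc 0$ for every admissible $y\in[0,x_\alpha]$'' to ``$m(\lvert T\rvert x_\alpha)\oc 0$'' is precisely where the work lies, and it does not follow from monotonicity of $m$: in general $m(\sup S)$ is not controlled by $\sup_{s\in S}m(s)$, so $m$-smallness of each difference $Ty_1-Ty_2$ says nothing a priori about the supremum defining $\lvert T\rvert x_\alpha$. (One could instead aim directly at order continuity of $T^{\pm}$: for $x_\alpha\downarrow 0$ one has $T^{\pm}x_\alpha\downarrow$, so by the argument of Proposition \ref{Toc} it would suffice to know $m(T^{\pm}x_\alpha)\oc 0$---but that is the same unproved estimate.) As written, the proposal is an honest but incomplete outline; it makes explicit, without resolving, a step that the paper's own two-line proof passes over in silence.
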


\begin{proof}
Since  $Y$ is order complete and  $T$ is order bounded then $T=T^+-T^-$ by Riesz-Kantorovich formula. 
Now, Proposition \ref{Toc} implies that $T^+$ and $T^-$ are both order continuous. Hence, $T$ is also order continuous.
\end{proof}

\begin{prop}
Let $(X,\lVert\cdot\rVert_X)$ be a $\sigma$-order continuous Banach lattice. Then $T\in B(X)$ iff $T:(X,\lvert\cdot\rvert,X)\to(X,\lVert\cdot\rVert_X,\mathbb{R})$ 
is sequentially $p$-continuous.
\end{prop}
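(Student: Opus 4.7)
The plan is to prove both implications by reducing sequential $p$-continuity from $(X,\lvert\cdot\rvert,X)$ to $(X,\lVert\cdot\rVert_X,\mathbb{R})$ to the statement: $x_n\oc 0$ in $X$ implies $\lVert Tx_n\rVert_X\to 0$. The key underlying fact I would use is the well-known characterization that, for a \emph{sequence} in an Archimedean vector lattice, $x_n\oc 0$ is equivalent to the existence of a sequence $u_n\downarrow 0$ with $\lvert x_n\rvert\le u_n$. I treat this as available and handle the two directions separately.

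For the forward direction, assume $T\in B(X)$ and let $x_n\oc 0$ in $X$. Choose $u_n\downarrow 0$ with $\lvert x_n\rvert\le u_n$. By $\sigma$-order continuity of $\lVert\cdot\rVert_X$, $\lVert u_n\rVert_X\to 0$, and since the norm is a lattice norm, $\lVert x_n\rVert_X\le\lVert u_n\rVert_X\to 0$. Hence $\lVert Tx_n\rVert_X\le\lVert T\rVert\,\lVert x_n\rVert_X\to 0$, which is exactly $Tx_n\pc 0$ in $(X,\lVert\cdot\rVert_X,\mathbb{R})$.

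For the reverse direction, assume $T$ is sequentially $p$-continuous. It suffices to show that $T$ is norm continuous at $0$, that is, $x_n\norm 0$ implies $\lVert Tx_n\rVert_X\to 0$. By Lemma \ref{subseq criterion} it is enough to show that every subsequence of $(x_n)$ admits a further subsequence along which $\lVert T\cdot\rVert_X\to 0$. Given an arbitrary subsequence, extract a further subsequence $(x_{n_k})$ with $\lVert x_{n_k}\rVert_X\le 2^{-k}$. Because $X$ is a Banach lattice, the partial sums of $\sum_k\lvert x_{n_k}\rvert$ are norm-Cauchy, hence converge in norm to some $u\in X_+$; set $u_j=\sum_{k\ge j}\lvert x_{n_k}\rvert$. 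Then $u_j\downarrow$ and $\lVert u_j\rVert_X\le\sum_{k\ge j}2^{-k}\to 0$. Since $\lVert\cdot\rVert_X$ is $\sigma$-order continuous, the monotone sequence $u_j$ cannot decrease to a nonzero positive element (its infimum would have zero norm and therefore vanish), so $u_j\downarrow 0$. As $\lvert x_{n_k}\rvert\le u_k$, we get $x_{n_k}\oc 0$, and the hypothesis on $T$ yields $\lVert Tx_{n_k}\rVert_X\to 0$, which is exactly what Lemma \ref{subseq criterion} requires.

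The main obstacle I anticipate is the second direction: one must convert a norm null sequence into an order null subsequence, and this is where $\sigma$-order continuity and completeness of $X$ are both genuinely used (to sum $\sum\lvert x_{n_k}\rvert$ and to conclude $u_j\downarrow 0$ from $\lVert u_j\rVert_X\to 0$). The forward direction is essentially immediate from the sequence-version domination $\lvert x_n\rvert\le u_n\downarrow 0$ combined with norm boundedness of $T$.
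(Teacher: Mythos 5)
Your proof is correct and takes essentially the same route as the paper's: order convergence is converted to norm convergence via $\sigma$-order continuity in the forward direction, and in the reverse direction a norm-null sequence is shown to admit an order-null subsequence, after which Lemma \ref{subseq criterion} finishes the argument. The only real difference is that where the paper simply cites \cite[Thm.VII.2.1]{V} for the extraction of the order-null subsequence, you reprove that fact inline via the standard $u_j=\sum_{k\ge j}\lvert x_{n_k}\rvert$ construction (which, as a small correction to your closing remark, uses only norm completeness and the lattice-norm property of $\lVert\cdot\rVert_X$, not its $\sigma$-order continuity).
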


\begin{proof}
$(\Rightarrow)$ Assume that $T\in B(X)$, and let $x_n\pc 0$ in $(X,\lvert\cdot\rvert,X)$. 
Then $x_n\oc 0$ in $X$. Since $(X,\lVert\cdot\rVert_X)$ is $\sigma$-order continuous Banach lattice then $x_n\nc 0$ and hence $Tx_n\nc 0$. 
Therefore, $T:(X,\lvert\cdot\rvert,X)\to(X,\lVert\cdot\rVert_X,\mathbb{R})$ is sequentially $p$-continuous.\\
$(\Leftarrow)$ Assume $T:(X,\lvert\cdot\rvert,X)\to(X,\lVert\cdot\rVert_X,\mathbb{R})$ to be sequentially $p$-continuous. 
Suppose $x_n\nc 0$ and let $x_{n_k}$ be a subsequence. Then clearly $x_{n_k}\nc 0$. Since $(X,\lVert\cdot\rVert_X)$ is a Banach lattice, 
there is a subsequence $x_{n_{k_j}}$ such that $x_{n_{k_j}}\oc 0$ in $X$ (cf. \cite[Thm.VII.2.1]{V}), and so $x_{n_{k_j}}\pc 0$ in $(X,\lvert\cdot\rvert,X)$. 
Since $T$ is sequentially $p$-continuous then $Tx_{n_{k_j}}\nc 0$. Thus, it follows from Lemma \ref{subseq criterion} that $Tx_n\nc 0$.
\end{proof}

\begin{prop}\label{seqpcontimpliesnormcont}
Let $(X,p,E)$ be an LNVL with a Banach lattice $(E,\lVert\cdot\rVert_E)$ and $(Y,m,F)$ be an LNS with a $\sigma$-order continuous normed lattice
$(F,\lVert \cdot \rVert_F)$. If $T:(X,p,E)\to(Y,m,F)$ is sequentially $p$-continuous then 
$T:(X,p\text{-}\lVert\cdot\rVert_E)$ $\to(Y,m\text{-}\lVert\cdot\rVert_F)$ is norm continuous.
\end{prop}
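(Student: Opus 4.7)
The goal is to show that $T$ is norm continuous between the mixed-normed spaces, i.e., that $\|p(x_n)\|_E\to 0$ forces $\|m(Tx_n)\|_F\to 0$; linearity will then give genuine norm continuity. My plan is to pull back a norm null sequence in $E$ to an order (in fact, relatively uniform) null sequence along a subsequence, invoke the sequential $p$-continuity of $T$, and then push forward through the $\sigma$-order continuity of $F$'s norm. The subsequence-to-subsequence principle of Lemma \ref{subseq criterion} will let these two passes to subsequences combine cleanly.

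First I would fix a sequence $x_n$ with $\|p(x_n)\|_E\to 0$ and let $Tx_{n_k}$ be any subsequence. Since $\|p(x_{n_k})\|_E\to 0$, one may pass to a further subsequence, relabeled $x_{n_{k_j}}$, for which $\|p(x_{n_{k_j}})\|_E\le 2^{-j}$. The central construction exploits that $E$ is a Banach lattice: the series
\[
v\;:=\;\sum_{j=1}^{\infty}2^{j/2}\,p(x_{n_{k_j}})
\]
is absolutely convergent, since $\bigl\|2^{j/2}p(x_{n_{k_j}})\bigr\|_E\le 2^{-j/2}$ and $\sum 2^{-j/2}<\infty$, so the increasing sequence of positive partial sums converges in norm to some $v\in E_+$. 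Using closedness of the positive cone, each partial sum is dominated by $v$, and in particular $0\le p(x_{n_{k_j}})\le 2^{-j/2}v$ for every $j$.

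Because $E$ is Archimedean, $2^{-j/2}v\downarrow 0$, and so $p(x_{n_{k_j}})\xrightarrow{o}0$ in $E$, i.e.\ $x_{n_{k_j}}\pc 0$ in $(X,p,E)$. The sequential $p$-continuity of $T$ then yields $m(Tx_{n_{k_j}})\xrightarrow{o}0$ in $F$. Finally, $\sigma$-order continuity of $\lVert\cdot\rVert_F$ promotes this to $\|m(Tx_{n_{k_j}})\|_F\to 0$, that is, $Tx_{n_{k_j}}\to 0$ in $(Y,m\text{-}\lVert\cdot\rVert_F)$. Since every subsequence of $Tx_n$ has such a further subsequence converging to $0$, Lemma \ref{subseq criterion} gives $Tx_n\to 0$ in the mixed norm on $Y$, and linearity finishes norm continuity.

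The main technical point, and the only nontrivial step, is the passage from the norm-null sequence $p(x_{n_k})$ in $E$ to an order-null (equivalently, relatively uniformly null) subsequence; everything else is routine. This step really does need $E$ to be a Banach lattice in order to sum the geometrically weighted tails into a single dominating element $v$, and it is also where Archimedean-ness of $E$ is used. Once the bound $p(x_{n_{k_j}})\le 2^{-j/2}v$ is in hand, the sequential $p$-continuity hypothesis on $T$ and the $\sigma$-order continuity of $\lVert\cdot\rVert_F$ do exactly the work they are designed for.
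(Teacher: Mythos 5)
Your proof is correct and follows essentially the same route as the paper's: fix a sequence with $\lVert p(x_n)\rVert_E\to 0$, pass to an arbitrary subsequence, extract a further subsequence along which $p(x_{n_{k_j}})\oc 0$ in $E$, apply sequential $p$-continuity of $T$ and the $\sigma$-order continuity of $\lVert\cdot\rVert_F$, and conclude with Lemma \ref{subseq criterion}. The only difference is that you prove the extraction step (a norm-null sequence in a Banach lattice has a relatively uniformly, hence order, null subsequence) by hand via the weighted absolutely convergent series, whereas the paper simply cites \cite[Thm.VII.2.1]{V}.
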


\begin{proof}
Let $x_n$ be a sequence in $X$ such that $x_n\xrightarrow{p\text{-}\lVert\cdot\rVert_E}0$ (i.e. $\lVert p(x_n)\rVert_E\to 0$). 
Given a subsequence $x_{n_k}$ then $\lVert p(x_{n_k})\rVert_E\to 0$. Since $(E,\lVert\cdot\rVert_E)$ is a Banach lattice, 
there is a further subsequence $x_{n_{k_j}}$ such that $p(x_{n_{k_j}})\oc 0$ in $E$ (cf. \cite[Thm.VII.2.1]{V}). Hence, $x_{n_{k_j}}\pc 0$ in $(X,p,E)$. 
Now, the $p$-continuity of $T$ implies $m(Tx_{n_{k_j}})\oc 0$ in $F$. But $F$ is  $\sigma$-order continuous and so $\lVert m(Tx_{n_{k_j}})\rVert_F\to 0$ 
or $m\text{-}\lVert Tx_{n_{k_j}}\rVert_F\to 0$. Hence, Lemma \ref{subseq criterion} implies $m\text{-}\lVert Tx_n\rVert_F\to 0$. So $T$ is norm continuous.
\end{proof}

The next theorem is a partial converse of Proposition \ref{seqpcontimpliesnormcont}. 
\begin{thm}\label{normcontimpliespcont}
Suppose $(X,p,E)$ to be an LNS with an order continuous $($respectively, $\sigma$-order continuous$)$ normed lattice $(E,\lVert\cdot\rVert_E)$ and $(Y,m,F)$ to be an LNS with an atomic Banach lattice $(F,\lVert\cdot\rVert_F)$. 
Assume further that$:$
\begin{enumerate}
\item[(i)] $T:(X,p\text{-}\lVert \cdot \rVert_E)\to(Y,m\text{-}\lVert\cdot\rVert_F)$ is norm continuous, and
\item[(ii)]$T:(X,p,E)\to(Y,m,F)$ is $p$-bounded.
\end{enumerate}
Then $T:(X,p,E)\to(Y,m,F)$ is $p$-continuous $($respectively, sequentially $p$-continuous$)$.
\end{thm}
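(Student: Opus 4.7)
The plan is to chain the hypotheses: push $x_\alpha\pc 0$ through the mixed norm on $X$ using the (sigma-)order continuity of $\|\cdot\|_E$, apply hypothesis~(i) to transfer to norm convergence in $(Y,m\text{-}\|\cdot\|_F)$, and finally exploit hypothesis~(ii) together with the atomic structure of $F$ to recover $o$-convergence in $F$.

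First I would observe that $x_\alpha\pc 0$ in $X$ means $p(x_\alpha)\oc 0$ in $E$, and (sigma-)order continuity of $\|\cdot\|_E$ upgrades this to $\|p(x_\alpha)\|_E\to 0$, i.e.\ $x_\alpha$ converges to $0$ in the mixed norm on $X$. Hypothesis~(i) then yields $\|m(Tx_\alpha)\|_F\to 0$. Moreover, $p(x_\alpha)\oc 0$ in $E$ supplies an $e\in E_+$ and an index $\alpha_0$ with $p(x_\alpha)\le e$ for all $\alpha\ge\alpha_0$ (just pick one element of the dominating decreasing net), so applying hypothesis~(ii) to the $p$-bounded tail $\{x_\alpha:\alpha\ge\alpha_0\}$ produces $f\in F_+$ with $0\le m(Tx_\alpha)\le f$ for $\alpha\ge\alpha_0$. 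The task therefore reduces to the purely lattice-theoretic statement: an order-bounded, norm-null net (respectively sequence) in the atomic Banach lattice $F$ is $o$-null.

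For this last reduction I would use that atomic Banach lattices are Dedekind complete (each embeds as an ideal in the product space over its atoms), so the tail suprema $h_\beta:=\sup_{\alpha\ge\beta}m(Tx_\alpha)$ exist in $F$ and decrease in $\beta\ge\alpha_0$. For every atom $a\in F$ the band projection $P_a\colon F\to\mathbb{R} a$ is positive, norm bounded (norm $\le 1$), and order continuous; writing $P_a(m(Tx_\alpha))=\lambda_a(\alpha)\,a$, norm convergence forces $\lambda_a(\alpha)\to 0$, whence $P_a(h_\beta)=(\sup_{\alpha\ge\beta}\lambda_a(\alpha))\,a\downarrow 0$. Setting $h:=\inf_\beta h_\beta$, order continuity of $P_a$ gives $P_a(h)=0$ for every atom, so atomicity of $F$ forces $h=0$; since $0\le m(Tx_\alpha)\le h_\beta$ for $\alpha\ge\beta$ with $h_\beta\downarrow 0$, we conclude $m(Tx_\alpha)\oc 0$ in $F$, i.e.\ $Tx_\alpha\pc 0$. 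The sequential case is identical with only countable suprema required. The main obstacle is precisely this atomic-Banach-lattice step where norm-null must be upgraded to order-null: it is the sole place the atomicity of $F$ enters, and what is needed is that atoms separate the positive cone while the associated band projections are simultaneously order-continuous and norm-bounded.
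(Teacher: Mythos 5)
Your first three steps (order continuity of $\lVert\cdot\rVert_E$ gives mixed-norm convergence of $x_\alpha$, hypothesis (i) gives $\lVert m(Tx_\alpha)\rVert_F\to 0$, hypothesis (ii) applied to a tail gives $0\le m(Tx_\alpha)\le f$ for $\alpha\ge\alpha_0$) are correct and are exactly the paper's. The gap is in the final reduction. You assert that atomic Banach lattices are Dedekind complete because each ``embeds as an ideal in the product space over its atoms.'' This is false: the space $c$ of convergent sequences is an atomic Banach lattice (its atoms are the standard unit vectors $e_n$, and $\{e_n:n\in\mathbb{N}\}^{dd}=c$), yet it is not an ideal in $\mathbb{R}^{\mathbb{N}}$ (we have $0\le\chi_{2\mathbb{N}}\le\mathds 1$ with $\mathds 1\in c$ but $\chi_{2\mathbb{N}}\notin c$) and it is not Dedekind complete. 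Consequently the tail suprema $h_\beta=\sup_{\alpha\ge\beta}m(Tx_\alpha)$ on which your whole construction of the dominating net rests need not exist in $F$, and the argument collapses at that point. Passing to the Dedekind completion of $F$ does not immediately rescue it, since the dominating net in the definition of $o$-convergence must be taken inside $F$ itself.

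The statement you are reducing to (an order-bounded, atom-wise null net in an atomic Banach lattice is $o$-null) is nevertheless true, and the correct mechanism is the one the paper makes explicit in Lemma \ref{atomipointwiseconvergence}: do not take tail suprema, but build the dominating net by hand, indexed by pairs $(A,n)$ with $A$ a finite set of atoms and $n\in\mathbb{N}$, setting
$$y_{(A,n)}=\frac{1}{n}\sum_{a\in A}P_a f+\Big(f-\sum_{a\in A}P_a f\Big),$$
where $f$ is the order bound of the tail and $P_a$ is the band projection onto $\mathrm{span}\{a\}$. This net exists with no completeness assumption (only finitely many projection bands are involved), it decreases to $0$ by atomicity (any positive lower bound has vanishing component on every atom, hence is disjoint from all atoms, hence is $0$), and for fixed $(A,n)$ the finitely many conditions $P_a\big(m(Tx_\alpha)\big)\le\frac{1}{n}P_a f$, $a\in A$, hold for all sufficiently large $\alpha$ because $f_a\big(m(Tx_\alpha)\big)\to 0$. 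This yields $m(Tx_\alpha)\oc 0$ directly. So you have correctly located where atomicity must enter and which functionals carry the information, but the specific route through order completeness and tail suprema is not available at the stated level of generality and must be replaced by this finite-set construction.
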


\begin{proof}
We assume that $(E,\lVert\cdot\rVert_E)$ is an order continuous normed lattice and show the $p$-continuity of $T$, the other case is similar. Suppose $x_\alpha\pc 0$ in $(X,p,E)$ then $p(x_\alpha)\oc 0$ in $E$ and so there is $\alpha_0$ such that $p(x_\alpha)\leq e$ for all $\alpha\geq\alpha_0$. 
Thus, $(x_\alpha)_{\alpha\geq\alpha_0}$ is $p$-bounded and, since $T$ is $p$-bounded then $(Tx_\alpha)_{\alpha\geq\alpha_0}$ is $p$-bounded in $(Y,m,F)$. 
	
Since $(E,\lVert\cdot\rVert_E)$ is order continuous and $p(x_\alpha)\oc 0$ in $E$ then $\lVert p(x_\alpha)\rVert_E\to 0$ or $p\text{-}\lVert x_\alpha\rVert_E\to 0$. 
The norm continuity of $T:(X,p\text{-}\lVert\cdot\rVert_E)\to(Y,m\text{-}\lVert\cdot\rVert_F)$ ensures that $\lVert m(Tx_\alpha)\rVert_F\to 0$ or $m\text{-}\lVert Tx_\alpha\rVert_F\to 0$. 
In particular, $\lVert m(Tx_\alpha)\rVert_F\to 0$ for $\alpha\geq \alpha_0$.
	
Let $a\in F$ be an atom, and $f_a$ be the biorthogonal functional corresponding to $a$ then $ f_a\big (m(Tx_\alpha) \big )\to 0.$ Since $m(Tx_\alpha)$ is order bounded for all $\alpha\geq\alpha_0$ and $ f_a\big (m(Tx_\alpha) \big )\to 0$ for any atom $a\in F$, the atomicity of $F$ implies that $m(Tx_\alpha)\oc 0$ in $F$ as $\alpha_0\le\alpha\to\infty$. Thus, $T:(X,p,E)\to(Y,m,F)$ is $p$-continuous.
\end{proof}

The next result extends the well-known fact that every order continuous operator between vector lattices is order bounded, and its proof is similar to \cite[Thm.2.1]{YG}.

\begin{prop}\label{p-cont.isp-bdd}
Let $T$ be a $p$-continuous operator between LNSs $(X,p,E)$ and $(Y,m,F)$ then $T$ is $p$-bounded.
\end{prop}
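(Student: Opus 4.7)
The plan is an argument by contradiction that mirrors the classical $x_n/n$-scaling used for order continuous operators on vector lattices, but adapted via a net because $F$ may fail to admit sequences along which unboundedness is visible. Suppose, for contradiction, that $A\subseteq X$ is $p$-bounded with $p(a)\le e$ for all $a\in A$, yet $\{m(Ta):a\in A\}$ has no upper bound in $F$; equivalently, for every $v\in F_+$ there is some $a\in A$ with $m(Ta)\not\le v$.

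I would then index a net in $A$ by the directed set $D:=\mathbb{N}\times F_+$ with the product order. For each $(n,v)\in D$, select $a_{n,v}\in A$ with $m(Ta_{n,v})\not\le nv$, and set $b_{n,v}:=a_{n,v}/n$. Since $p(b_{n,v})\le e/n$ and $E$ is Archimedean, the sequence $(e/k)_{k\in\mathbb{N}}$ decreases to $0$ and witnesses $p(b_{n,v})\oc 0$ along $D$ (via the thresholds $(k,0)$), so $b_{n,v}\pc 0$. The $p$-continuity of $T$ then yields $m(Ta_{n,v})/n\oc 0$ in $F$, so there is a net $h_\beta\downarrow 0$ in $F_+$ and, fixing any $h:=h_{\beta_0}$, a threshold $(n_0,v_0)\in D$ such that $m(Ta_{n,v})\le nh$ whenever $(n,v)\ge(n_0,v_0)$.

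The contradiction is extracted by a diagonal choice: take $(n,v)=(n_0,\,v_0+h)$, which dominates $(n_0,v_0)$ in $D$. The limiting inequality gives $m(Ta_{n_0,v_0+h})\le n_0 h$, while the construction forces $m(Ta_{n_0,v_0+h})\not\le n_0(v_0+h)=n_0 v_0+n_0 h$; these are incompatible because $n_0 h\le n_0 v_0+n_0 h$. The main obstacle is precisely arranging the indexing so that the two demands do not interfere: the $\mathbb{N}$-coordinate must drive $p$-nullness of $b_{n,v}$ through Archimedeanness of $E$, while the $F_+$-coordinate must remain free to absorb the a posteriori bound $h$ delivered by $p$-continuity. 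Product-indexing over $\mathbb{N}\times F_+$ achieves exactly this, and the diagonal substitution $v\mapsto v_0+h$ is the crux—it is the LNS analogue of the classical $x_n/n$ scaling in the vector-lattice version of the result.
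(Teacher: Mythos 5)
Your argument is correct, but it takes a genuinely different route from the paper's. The paper gives a \emph{direct} proof: it indexes the net by $I=\mathbb{N}\times A$ with the lexicographic order and sets $x_{(n,a)}=\frac{1}{n}a$; the whole point of the lexicographic order is that once a threshold $(n_0,a_0)$ is produced by the order convergence $m(Tx_{(n,a)})\oc 0$, \emph{every} index $(n_0+1,a)$ with $a\in A$ lies above it, so the single bound $z_{\beta_0}$ applies uniformly and yields the explicit majorant $m(Ta)\le(n_0+1)z_{\beta_0}$ for all $a\in A$, with no contradiction and no choice of ``bad'' elements. You instead argue by contradiction, index by $\mathbb{N}\times F_+$ with the product order, select for each $(n,v)$ an element $a_{n,v}\in A$ with $m(Ta_{n,v})\not\le nv$, and close the argument with the diagonal substitution $v\mapsto v_0+h$; this is sound (the selection is legitimate because failure of $p$-boundedness of $T(A)$ means no element of $F$, in particular no $nv$, dominates $\{m(Ta):a\in A\}$, and $(e/k)_{k\in\mathbb{N}}\downarrow 0$ by the Archimedean property once one notes $e\ge 0$). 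Both proofs rest on the same $a/n$-scaling adapted from the order-continuity setting; what the paper's lexicographic indexing buys is a constructive uniform bound and a slightly leaner argument, while your product-order version is closer in form to the classical contrapositive scheme and makes the role of the second coordinate (absorbing the a posteriori bound $h$) explicit.
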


\begin{proof}
Assume that $T:X\to Y$ is $p$-continuous. Let $A\subset X$ be $p$-bounded (i.e. there is $e\in E$ such that $p(a)\leq e$ for all $a\in A$). 
Let $I= \mathbb{N}\times A$ be an index set with the lexicographic order. That is: $(m,a')\leq(n,a)$ iff $m<n$ or else $m=n$ and $p(a')\leq p(a)$. 
Clearly, $I$ is directed upward. Define the following net as $x_{(n,a)}=\frac{1}{n}a$. Then $p(x_{(n,a)})=\frac{1}{n}p(a)\leq \frac{1}{n}e$. 
So $p(x_{(n,a)})\oc 0$ in $E$ or $x_{(n,a)}\pc 0$. By $p$-continuity of $T$, we get $m(Tx_{(n,a)})\oc 0$. So there is a net $(z_\beta)_{\beta\in B}$ 
such that $z_\beta \downarrow 0$ in $F$ and for any $\beta\in B$, there exists $(n',a')\in I$ satisfying $m(Tx_{(n,a)})\leq z_\beta$ for all $(n,a)\geq (n',a')$. 
Fix $\beta_0\in B$. Then there is $(n_0,a_0)\in I$ satisfying $m(Tx_{(n,a)})\leq z_{\beta_0}$ for all $(n,a)\geq (n_0,a_0)$. In particular, $(n_0+1,a)\geq (n_0,a_0)$ 
for all $a\in A$. Thus, $m(Tx_{(n_0+1,a)})=m(\frac {1}{n_0+1}Ta)\leq z_{\beta_0}$ or $m(Ta)\leq (n_0+1)z_{\beta_0}$ for all $a\in A$. Therefore, $T$ is $p$-bounded. 
\end{proof}

\begin{rem} \ \
\begin{enumerate}
	\item[(i)] It is known that the converse of Proposition \ref{p-cont.isp-bdd} is not true. For example, let $X=C[0,1]$ then $X^*=X^\sim$ and $X_c^\sim=X_n^\sim=\{0\}.$ Here $X_c^\sim$ denotes the $\sigma$-order continuous dual of $X$ and $X_n^\sim$ denotes the order continuous dual of $X.$ So, for any $0\neq f \in X^*$ we have $f:(X,\lvert \cdot \rvert,X)\to (\mathbb{R},\lvert \cdot \rvert,\mathbb{R})$ is $p$-bounded, which is not $p$-continuous.
	\item[(ii)] If $T:(X,E)\to(Y,F)$ between two LNVLs is $p$-continuous then $T:X\to Y$ as an operator between two vector lattices 
	need not be order bounded. Let's consider Lozanovsky's example $\big($cf. \cite[Exer.10,p.289]{AB}$\big)$. If $T:L_1[0,1]\to c_0$ is defined by 
	$$
	T(f)=\bigg ( \int_{0}^{1} f(x)sinx \ dx,\int_{0}^{1} f(x)sin2x\ dx,... \bigg).
	$$
	Then it can be shown that $T$ is norm bounded which is not order bounded. So $T: (L_1[0,1],\lVert \cdot \rVert_{L_1},\mathbb{R})\to (c_0,\lVert \cdot \rVert_{\infty},\mathbb{R})$ is $p$-continuous and $T:L_1[0,1]\to c_0$ is not order bounded.
\end{enumerate}
\end{rem}

\noindent

Recall that $T\in L(X,Y)$; where $X$ and $Y$ are normed spaces, is called \textit{Dunford-Pettis} if $x_n\wc 0$ in $X$ implies $Tx_n\norm 0$ in $Y$.

\begin{prop}\label{dunford-pettis}
Let $(X,\lVert\cdot\rVert_X)$ be a normed lattice and $(Y,\lVert\cdot\rVert_Y)$ be a normed space. 
Put $E:=\mathbb{R}^{X^*}$ and define $p:X\to E_+$ by $p(x)[f]=|f|(|x|)$ for $f\in X^*$. 
It is easy to see that $(X,p,E)$ is an LNVL $($cf. \cite[Ex.4]{AEEM}$)$.
\begin{enumerate}
\item[(i)] If $T\in L(X,Y)$ is a Dunford-Pettis operator then $T:(X,p,E)\to(Y,\lVert\cdot\rVert_Y,\mathbb{R})$ is sequentially $p$-continuous.
	
\item[(ii)] The converse holds true if the lattice operations of $X$ are weakly sequentially continuous.
\end{enumerate}
\end{prop}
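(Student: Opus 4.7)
The plan is to translate $p$-convergence in $(X,p,E)$ into a concrete pointwise statement and then use the definition of Dunford–Pettis directly. Since $E=\mathbb{R}^{X^*}$ is a Dedekind complete vector lattice under pointwise order, order convergence of a sequence there coincides with order boundedness plus pointwise convergence. Concretely, $x_n\pc 0$ in $(X,p,E)$ if and only if
\begin{enumerate}
\item[(a)] $\sup_{n}|f|(|x_n|)<\infty$ for every $f\in X^*$, and
\item[(b)] $|f|(|x_n|)\to 0$ for every $f\in X^*$.
\end{enumerate}
I would first record the small fact that, because $\|\cdot\|_X$ is a lattice norm, for any $f\in X^*$ the modulus $|f|$ (defined on $X_+$ by $|f|(x)=\sup\{|f(y)|\colon |y|\le x\}$) is bounded by $\|f\|_{X^*}\|x\|_X$ and hence extends to a positive element of $X^*$; I will need this twice below.

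For part (i), assume $T$ is Dunford–Pettis and take $x_n\pc 0$ in $(X,p,E)$. Condition (b) gives $|f(x_n)|\le |f|(|x_n|)\to 0$ for every $f\in X^*$, so $x_n\wc 0$ in $X$. The Dunford–Pettis property then yields $\|Tx_n\|_Y\to 0$, which is exactly $Tx_n\pc 0$ in the scalar LNS $(Y,\|\cdot\|_Y,\mathbb{R})$. So $T$ is sequentially $p$-continuous from $(X,p,E)$ to $(Y,\|\cdot\|_Y,\mathbb{R})$.

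For part (ii), assume $T\colon(X,p,E)\to(Y,\|\cdot\|_Y,\mathbb{R})$ is sequentially $p$-continuous and that the lattice operations of $X$ are weakly sequentially continuous. Let $x_n\wc 0$ in $X$. By the uniform boundedness principle the sequence $(x_n)$ is norm bounded, so $|f|(|x_n|)\le \|f\|_{X^*}\|x_n\|_X$ is uniformly bounded for each $f$; this delivers condition (a). Weak sequential continuity of the lattice operations gives $|x_n|\wc 0$, and testing against $|f|\in X^*$ yields $|f|(|x_n|)\to 0$ for every $f\in X^*$; this is condition (b). Hence $x_n\pc 0$ in $(X,p,E)$, and sequential $p$-continuity of $T$ forces $\|Tx_n\|_Y\to 0$, so $T$ is Dunford–Pettis.

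I do not expect any real obstacle: the only step that needs a moment of care is verifying the characterization of order convergence in the product lattice $\mathbb{R}^{X^*}$ and confirming that $|f|$ really lies in $X^*$ whenever $f\in X^*$ (which uses nothing beyond the lattice-norm property). Once those are in place both directions are immediate from the definitions.
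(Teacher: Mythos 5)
Your argument is correct and follows essentially the same route as the paper's proof: in both directions you pass between $p$-convergence in $(X,p,E)$ and weak convergence via the identities $p(x_n)[f]=|f|(|x_n|)$ and the weak sequential continuity of the lattice operations. The extra care you take (checking that $|f|\in X^*$ and adding the boundedness condition (a), which for sequences in $\mathbb{R}^{X^*}$ is in fact automatic from pointwise convergence) only makes explicit what the paper leaves implicit.
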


\begin{proof}
(i) Assume that $x_n\pc 0$ in $X$. Then $p(x_n)\oc 0$ in $E$, and hence $p(x_n)[f]\to 0$ or $|f|(|x_n|)\to 0$ for all $f\in X^*$. 
From which, it follows that $|x_n|\wc 0$ and so $x_n\wc 0$ in $X$. Since $T$ is a Dunford-Pettis operator then $Tx_n\xrightarrow{\lVert\cdot\lVert_Y}0$.

(ii) Assume that $x_n\wc 0$. Since the lattice operations of $X$ are weakly sequentially continuous then we get $|x_n|\wc 0$. 
So, for all $f\in X^*$, we have $|f|(|x_n|)\to 0$ or $p(x_n)[f]\to 0$. Thus, $x_n\pc 0$ and, since $T$ is sequentially $p$-continuous, 
we get $Tx_n\xrightarrow{\lVert\cdot\lVert_Y}0$. Therefore, $T$ is Dunford-Pettis.
\end{proof}

\begin{rem}
It should be noticed that there are many classes of Banach lattices that satisfy condition {\em (ii)} of Proposition \ref{dunford-pettis}. For example the lattice operations of atomic order continuous Banach lattices, $AM$-spaces and Banach lattices with atomic topological dual are all weakly sequentially continuous $($see respectively, \cite[Prop. 2.5.23]{M}, \cite[Thm. 4.31]{AB} and \cite[Cor. 2.2]{AE}$)$
\end{rem}

It is known that any positive operator from a Banach lattice into a normed lattice is norm continuous or, equivalently, is norm bounded (see e.g., \cite[Thm.4.3]{AB}). 
Similarly we have the following result.

\begin{thm}\label{positivityandboundedness}
Let $(X,p,E)$ be a sequentially $p$-complete LNVL such that $(E,\lVert\cdot\rVert_E)$ is a Banach lattice, and let $(Y,\lVert\cdot\rVert_Y)$ be a normed lattice.
If $T:X\to Y$ is a positive operator then $T$ is $p$-bounded as an operator from $(X,p,E)$ into $(Y,\lVert\cdot\rVert_Y,\mathbb{R})$.
\end{thm}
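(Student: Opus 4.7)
My plan is to reduce the statement to the classical fact that every positive operator from a Banach lattice to a normed lattice is norm bounded (cited in the excerpt as \cite[Thm.4.3]{AB}), by passing to the mixed-normed space associated to $(X,p,E)$.

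First, I would verify that $(X, p\text{-}\lVert\cdot\rVert_E)$ is in fact a Banach lattice. Completeness is exactly Lemma \ref{mixednormisbanach}, since we are assuming $(X,p,E)$ is sequentially $p$-complete and $(E,\lVert\cdot\rVert_E)$ is Banach. For the lattice-norm property, if $|x|\leq|y|$ in $X$ then monotonicity of $p$ (built into the LNVL definition) gives $p(x)\leq p(y)$ in $E_+$, and then the fact that $\lVert\cdot\rVert_E$ is a lattice norm on $E$ yields $\lVert p(x)\rVert_E\leq\lVert p(y)\rVert_E$, i.e. $p\text{-}\lVert x\rVert_E\leq p\text{-}\lVert y\rVert_E$.

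Next, since $T:X\to Y$ is positive and $(X,p\text{-}\lVert\cdot\rVert_E)$ is now a Banach lattice and $(Y,\lVert\cdot\rVert_Y)$ is a normed lattice, the classical theorem gives a constant $M>0$ such that
\[
\lVert Tx\rVert_Y\leq M\,\lVert p(x)\rVert_E\qquad\text{for all }x\in X.
\]
Finally, take any $p$-bounded set $A\subset X$, so there exists $e\in E$ with $p(a)\leq e$ for all $a\in A$; replacing $e$ by $e^+$ we may assume $e\in E_+$. Then $0\leq p(a)\leq e$ together with the lattice-norm property of $\lVert\cdot\rVert_E$ yields $\lVert p(a)\rVert_E\leq\lVert e\rVert_E$, and therefore $\lVert Ta\rVert_Y\leq M\lVert e\rVert_E$ for every $a\in A$. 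This says exactly that $T(A)$ is $p$-bounded in $(Y,\lVert\cdot\rVert_Y,\mathbb{R})$, completing the proof.

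There is no real obstacle here; the only point worth checking carefully is that the mixed norm is genuinely a lattice norm, which is why monotonicity is part of the LNVL hypothesis. The whole argument is just the observation that in this setting $p$-boundedness in $X$ and norm boundedness in the mixed-normed space are essentially the same notion, and $p$-boundedness in the codomain $(Y,\lVert\cdot\rVert_Y,\mathbb{R})$ coincides with ordinary norm boundedness.
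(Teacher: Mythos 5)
Your proof is correct. It rests on the same two pillars as the paper's argument, namely Lemma \ref{mixednormisbanach} (to get completeness of the mixed-normed space) and the monotonicity of $p$ (to get that the mixed norm is a lattice norm), but you then finish differently: you invoke the classical theorem that a positive operator from a Banach lattice into a normed lattice is norm bounded, applied to $T:(X,p\text{-}\lVert\cdot\rVert_E)\to(Y,\lVert\cdot\rVert_Y)$, and translate norm boundedness back into $p$-boundedness via $p(a)\leq e\Rightarrow\lVert p(a)\rVert_E\leq\lVert e^+\rVert_E$. The paper instead argues by contradiction and re-runs the standard proof of that classical theorem inside the mixed-normed space: it picks $x_n\geq 0$ in a $p$-bounded set with $\lVert Tx_n\rVert_Y\geq n^3$, forms the absolutely convergent series $x=\sum_n n^{-2}x_n$ in $(X,p\text{-}\lVert\cdot\rVert_E)$, and uses positivity of $T$ to get $\lVert Tx\rVert_Y\geq n$ for all $n$. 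So the two arguments are morally identical, but yours is more economical because it black-boxes the series argument into the cited theorem, whereas the paper's version is self-contained and makes visible exactly where sequential $p$-completeness and norm completeness of $E$ enter (which is then exploited in the two counterexamples that follow the theorem). Your explicit verification that the mixed norm is a lattice norm is a point the paper glosses over when it calls $(X,p\text{-}\lVert\cdot\rVert_E)$ a Banach lattice, so including it is a small improvement in rigor.
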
	

\begin{proof}
Assume that $T:(X,p,E)\to(Y,\lVert\cdot \rVert_Y,\mathbb{R})$ is not $p$-bounded. Then there is a $p$-bounded subset $A$ of $X$ such that $T(A)$ is not norm bounded in $Y$. 
Thus, there is $e\in E_+$ such that $p(a)\leq e$ for all $a\in A$, but $T(A)$ is not norm bounded in $Y$. Hence, for any $n\in  \mathbb{N}$, there is an $x_n\in A$ 
such that $\lVert Tx_n\rVert_Y \geq n^3$. Since $\lvert Tx_n \rvert \leq T\lvert x_n \rvert$, we may assume without loss of generality that $x_n\geq 0$. 
Consider the series $\sum\limits_{n=1}^{\infty}\frac{1}{n^2}x_n$ in the mixed-norm space $(X,p\text{-}\lVert\cdot\rVert_E)$, which is a Banach lattice due to Lemma \ref{mixednormisbanach}. Then
$$
  \sum\limits_{n=1}^{\infty}p\text{-}\lVert \frac{1}{n^2}x_n \rVert_E=\sum\limits_{n=1}^{\infty} \frac{1}{n^2}\lVert p(x_n)\rVert_E\leq \lVert e\rVert_E\sum\limits_{n=1}^{\infty}\frac{1}{n^2}<\infty.
$$ 
Since the series $\sum\limits_{n=1}^{\infty}\frac{1}{n^2}x_n$ is absolutely convergent, it converges to some element, say $x$, i.e. $x=\sum\limits_{n=1}^{\infty}\frac{1}{n^2}x_n\in X$. 
Clearly, $x\geq \frac{1}{n^2}x_n$ for every $n\in  \mathbb{N}$ and, since $T\geq 0$ then $T(x)\geq \frac{1}{n^2}Tx_n$, which implies 
$\lVert Tx\rVert_Y\geq\frac{1}{n^2}\lVert Tx_n\rVert_Y\geq n$ for all $n\in \mathbb{N}$; a contradiction. 
\end{proof}

\begin{exam}
$($Sequential $p$-completeness in Theorem \ref{positivityandboundedness} can not be removed$)$
\noindent	
Let $T:(c_{00},\lvert\cdot\rvert,\ell_\infty)\to(\mathbb{R},\lvert\cdot\rvert,\mathbb{R})$ be defined by $T(x_n)=\sum\limits_{n=1}^{\infty}nx_n$. 
Then $T\geq 0$ and clearly the LNVL $(c_{00},\lvert\cdot\rvert,\ell_\infty)$ is not sequentially $p$-complete.
	
Consider the $p$-bounded sequence $e_n$ in $(c_{00},\lvert\cdot\rvert,\ell_\infty)$. Since $Te_n=n$ for all $n\in \mathbb{N}$, the sequence 
$Te_n$ is not norm bounded in $\mathbb{R}$. Hence, $T$ is not $p$-bounded.
\end{exam}

\begin{exam}$($Norm completeness of $(E,\left\|\cdot\right\|_E)$ can not be removed in Theorem \ref{positivityandboundedness}$)$
\noindent
Consider the LNVL $(c_{00},p,c_{00})$, where $p(x_n)= (\sum\limits_{n=1}^{\infty}\lvert x_n\rvert)e_1$. It can be seen easily that  
$(c_{00},p,c_{00})$ is sequentially $p$-complete. Note that $(c_{00},\lVert \cdot\rVert_{\infty})$ is not norm complete. 
Define $S:(c_{00},p,c_{00})\to(\mathbb{R},\lvert\cdot\rvert,\mathbb{R})$ by $S(x_n)=\sum\limits_{n=1}^{\infty}nx_n$. 
Then $S\geq 0$, $p(e_n)\leq e_1$ for each $n\in \mathbb{N}$. But $Se_n=n$ is not bounded in $\mathbb{R}$.
\end{exam}

It is well-known that the adjoint of an order bounded operator between two vector lattices is always order bounded and order continuous 
(see, for example \cite[Thm.1.73]{AB}). The following two results deal with a similar situation.

\begin{thm}\label{adjoint}
Let $(X,\lVert\cdot\rVert_X)$ be a normed lattice and $Y$ be a vector lattice. Let $Y_c^\sim$ denote the $\sigma$-order continuous dual of $Y$. 
If $0\leq T:(X,\lVert\cdot\rVert_X,\mathbb{R})\to(Y,\lvert\cdot\rvert,Y)$ is sequentially $p$-continuous and $p$-bounded 
then the operator $T^{\sim}:(Y_c^\sim,\lvert\cdot\rvert,Y_c^\sim)\to(X^*,\lVert\cdot\rVert_{X^*},\mathbb{R})$ defined by $T^{\sim}(f):=f\circ T$ is $p$-continuous.
\end{thm}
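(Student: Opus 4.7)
The plan is to show that the $p$-boundedness of $T$ alone already produces a single element $y_0\in Y_+$ which dominates $\lvert Tx\rvert$ uniformly on the unit ball, and then reduce both well-definedness of $T^{\sim}$ and its $p$-continuity to the behavior of $\lvert f_\alpha\rvert$ at that one element $y_0$.

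First I would unpack $p$-boundedness of $T:(X,\lVert\cdot\rVert_X,\mathbb{R})\to(Y,\lvert\cdot\rvert,Y)$. The closed unit ball $B_X$ is trivially $p$-bounded in the domain, so $T(B_X)$ is $p$-bounded in the target, meaning there exists $y_0\in Y_+$ with $\lvert Tx\rvert\leq y_0$ for every $x\in B_X$. For any $f\in Y_c^\sim$ and $x\in B_X$, the elementary inequality $\lvert f(y)\rvert\leq\lvert f\rvert(\lvert y\rvert)$ for $y\in Y$ then gives
\[
 \lvert T^{\sim}f(x)\rvert=\lvert f(Tx)\rvert\leq\lvert f\rvert(\lvert Tx\rvert)\leq\lvert f\rvert(y_0),
\]
so $\lVert T^{\sim}f\rVert_{X^*}\leq\lvert f\rvert(y_0)<\infty$. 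This shows simultaneously that $T^{\sim}f\in X^{*}$ (so $T^{\sim}:Y_c^\sim\to X^*$ is a well-defined linear map) and supplies the master estimate
\[
 \lVert T^{\sim}f\rVert_{X^*}\leq\lvert f\rvert(y_0)\qquad(f\in Y_c^\sim),
\]
which is the only tool I will need from the hypotheses on $T$.

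Now suppose $f_\alpha\pc 0$ in $(Y_c^\sim,\lvert\cdot\rvert,Y_c^\sim)$, i.e., $\lvert f_\alpha\rvert\oc 0$ in the vector lattice $Y_c^\sim$. By definition of order convergence, there is a net $g_\beta\downarrow 0$ in $Y_c^\sim$ and, for each $\beta$, an index $\alpha_\beta$ with $\lvert f_\alpha\rvert\leq g_\beta$ for $\alpha\geq\alpha_\beta$. Since $Y_c^\sim$ is a band (hence an order ideal) of the Dedekind complete vector lattice $Y^\sim$, the relation $g_\beta\downarrow 0$ also holds in $Y^\sim$, and a routine argument shows that then $g_\beta(y)\downarrow 0$ for every $y\in Y_+$: the pointwise limit $L(y):=\lim_\beta g_\beta(y)$ is additive and positively homogeneous on $Y_+$, extends to a positive linear functional on $Y$ with $L\leq g_\beta$ for all $\beta$, and hence $L=0$. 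Applying this at $y_0$, given $\varepsilon>0$, I pick $\beta$ with $g_\beta(y_0)<\varepsilon$; then for $\alpha\geq\alpha_\beta$,
\[
 \lVert T^{\sim}f_\alpha\rVert_{X^*}\leq\lvert f_\alpha\rvert(y_0)\leq g_\beta(y_0)<\varepsilon,
\]
so $\lVert T^{\sim}f_\alpha\rVert_{X^*}\to 0$, which is exactly $T^{\sim}f_\alpha\pc 0$ in $(X^*,\lVert\cdot\rVert_{X^*},\mathbb{R})$.

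The only step I expect to require any care is the passage from $g_\beta\downarrow 0$ in $Y_c^\sim$ to pointwise decrease $g_\beta(y_0)\downarrow 0$ in $\mathbb{R}$; this is where one uses that $Y_c^\sim$ is an ideal of $Y^\sim$ (so infima are computed consistently) together with the standard identification of the infimum of a downward directed net of positive functionals with the pointwise infimum. Everything else is a direct translation of the hypotheses via the uniform dominator $y_0$ produced by $p$-boundedness. I note in passing that sequential $p$-continuity of $T$ does not seem to enter the argument above — it would, alternatively, give well-definedness of $T^{\sim}$ on its own by combining norm-null sequences with $\sigma$-order continuity of each $f\in Y_c^\sim$ — but the proof via $p$-boundedness is the cleaner route.
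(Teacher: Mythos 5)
Your proof is correct, and its core mechanism is the same as the paper's: use $p$-boundedness of $T$ to produce a single $y_0\in Y_+$ dominating $\lvert Tx\rvert$ over $B_X$, and then reduce $\lVert T^{\sim}f_\alpha\rVert_{X^*}\to 0$ to the fact that an order-null (downward directed to zero) net of positive functionals in $Y^\sim$ converges pointwise to zero. The differences are real but modest. First, the paper establishes $T^{\sim}f\in X^*$ by a separate argument using sequential $p$-continuity of $T$ together with $\sigma$-order continuity of $f$ (norm-null sequences go to order-null sequences, hence $f(Tx_n)\to 0$), whereas you get well-definedness for free from the master estimate $\lVert T^{\sim}f\rVert_{X^*}\le\lvert f\rvert(y_0)$; your observation that sequential $p$-continuity is therefore never used is accurate, and in fact positivity of $T$ is not used in either argument. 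Second, where the paper cites Vulikh (Thm.~VIII.2.3) for the pointwise convergence $g_\beta(y_0)\downarrow 0$, you prove it directly via the Riesz--Kantorovich identification of the infimum of a downward directed net of positive functionals with its pointwise infimum; this is exactly the content of the cited theorem, so your proof is self-contained where the paper's is not. Third, you handle a general net $f_\alpha\pc 0$ by passing to $\lvert f_\alpha\rvert\oc 0$ and dominating it by $g_\beta$, which is slightly more careful than the paper's reduction to the case $0\le f_\alpha\oc 0$. Each step you flag as delicate (that $Y_c^\sim$ is a band of the Dedekind complete $Y^\sim$, so $g_\beta\downarrow 0$ persists in $Y^\sim$ and the pointwise-infimum formula applies) is standard and correctly invoked.
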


\begin{proof}
First, we prove that $T^\sim(f)\in X^*$ for each $f\in Y_c^\sim$. Assume $x_n\norm 0$. Since $T$ is sequentially $p$-continuous then $Tx_n\oc 0$ in $Y$. 
Since $f$ is $\sigma$-order continuous then $f(T x_n)\to 0$ or $(f\circ T)(x_n)\to 0$. Hence, we have $f\circ T\in X^*$.
	
Next, we show that $T^{\sim}$ is $p$-continuous. Assume $0\leq f_\alpha\oc 0$ in $Y_c^\sim$, we show $\lVert T^{\sim} f_\alpha\rVert_{X^*}\to 0$ or $\lVert f_\alpha\circ T\rVert_{X^*}\to 0$. 
Now, $\lVert f_\alpha\circ T\rVert_{X^*}=\sup\limits_{x\in B_X}\lvert(f_\alpha \circ T)x\rvert$. Since $B_X$ is $p$-bounded in $(X,\lVert \cdot \rVert_X,\mathbb{R})$ and $T$ is $p$-bounded operator 
then $T(B_X)$ is order bounded in $Y$. So there exists $y\in Y_+$ such that $-y\leq Tx\leq y$ for all $x\in B_X$.  Hence $-f_\alpha y\leq(f_\alpha\circ T)x\leq f_\alpha y$ for all $x\in B_X$  
and for all $\alpha$.  So $\lVert f_\alpha\circ T\rVert_{X^*}\subseteq [-f_\alpha y,f_\alpha y]$ for all $\alpha$.  
It follows from \cite[Thm.VIII.2.3]{V} that $\lim\limits_{\alpha}f_\alpha y=0$. Thus, $\lim\limits_{\alpha}\lVert f_\alpha\circ T\rVert_{X^*}=0$. Therefore, $T^{\sim}$ is $p$-continuous.
\end{proof}

\begin{thm}\label{AL-adjoint}
Let $X$ be a vector lattice and $Y$ be an $AL$-space. Assume $0\leq T:(X,\lvert\cdot\rvert,X)\to(Y,\lVert\cdot\rVert_Y,\mathbb{R})$ is sequentially $p$-continuous. 
Define $T^{\sim}:(Y^*,\lVert\cdot\rVert_{Y^*},\mathbb{R})\to(X^\sim,\lvert\cdot\rvert,X^\sim)$ by $T^{\sim}(f)=f\circ T$. Then $T^{\sim}$ is sequentially $p$-continuous and $p$-bounded.
\end{thm}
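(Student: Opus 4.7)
The plan is to manufacture a single positive functional $\tilde\rho\in X^{\sim}_+$ that dominates every $\lvert T^{\sim}f\rvert$ up to the factor $\lVert f\rVert_{Y^{*}}$; this one inequality then yields both sequential $p$-continuity and $p$-boundedness of $T^{\sim}$ simultaneously. The $AL$-space structure of $Y$ is exactly what allows such a $\tilde\rho$ to be built out of the a priori nonlinear map $x\mapsto\lVert Tx\rVert_Y$.

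First I would verify that $T^{\sim}f$ genuinely lands in $X^{\sim}$ for every $f\in Y^{*}$. Given $u\in X_+$ and any $y$ with $\lvert y\rvert\le u$, positivity of $T$ gives $\lvert Ty\rvert\le T\lvert y\rvert\le Tu$, and since $Y$ is a normed lattice this forces $\lVert Ty\rVert_Y\le\lVert Tu\rVert_Y$, so $\lvert f(Ty)\rvert\le\lVert f\rVert_{Y^{*}}\lVert Tu\rVert_Y$. Hence $f\circ T$ is order bounded, i.e.\ $T^{\sim}f\in X^{\sim}$. Because $X^{\sim}$ is a Dedekind complete Riesz space, the Riesz--Kantorovich formula then delivers, for every $u\in X_+$,
\[
\lvert T^{\sim}f\rvert(u)=\sup\{\lvert f(Ty)\rvert:\lvert y\rvert\le u\}\le\lVert f\rVert_{Y^{*}}\,\lVert Tu\rVert_Y.
\]

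Next I would exploit the $AL$-property to linearize $u\mapsto\lVert Tu\rVert_Y$. On $X_+$ the map $\rho(x):=\lVert Tx\rVert_Y$ is positively homogeneous, and for $x_1,x_2\in X_+$ one has $Tx_1,Tx_2\ge 0$, so additivity of the $AL$-norm on $Y_+$ yields $\rho(x_1+x_2)=\lVert Tx_1+Tx_2\rVert_Y=\lVert Tx_1\rVert_Y+\lVert Tx_2\rVert_Y=\rho(x_1)+\rho(x_2)$. By Kantorovich's extension theorem $\rho$ extends uniquely to a positive linear functional $\tilde\rho\in X^{\sim}_+$, and the previous estimate upgrades to the uniform bound $\lvert T^{\sim}f\rvert\le\lVert f\rVert_{Y^{*}}\,\tilde\rho$ in $X^{\sim}$, valid for every $f\in Y^{*}$.

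From this master inequality the two claims fall out painlessly. For sequential $p$-continuity: if $f_n\pc 0$ in $(Y^{*},\lVert\cdot\rVert_{Y^{*}},\mathbb{R})$, i.e.\ $\lVert f_n\rVert_{Y^{*}}\to 0$, set $c_n:=\sup_{k\ge n}\lVert f_k\rVert_{Y^{*}}$; then $c_n\downarrow 0$ in $\mathbb{R}$, and $\lvert T^{\sim}f_k\rvert\le c_n\tilde\rho$ for all $k\ge n$, while Archimedeanness of $X^{\sim}$ forces $c_n\tilde\rho\downarrow 0$ in $X^{\sim}$, so $\lvert T^{\sim}f_n\rvert\oc 0$, which is precisely $T^{\sim}f_n\pc 0$. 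For $p$-boundedness: a norm bounded $B\subset Y^{*}$ with $\lVert f\rVert_{Y^{*}}\le C$ is mapped by $T^{\sim}$ into the order interval $[-C\tilde\rho,C\tilde\rho]$, which is $p$-bounded in $(X^{\sim},\lvert\cdot\rvert,X^{\sim})$. The single delicate point will be deploying the $AL$-property at exactly the right moment: without additivity of $\lVert\cdot\rVert_Y$ on $Y_+$, the sublinear quantity $\lVert T\cdot\rVert_Y$ cannot be majorized by a linear functional, the Kantorovich extension step breaks, and the entire strategy collapses.
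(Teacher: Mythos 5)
Your proof is correct, and it reaches the paper's conclusion by a noticeably different route. The paper works entirely on the dual side: since $Y$ is an $AL$-space, $Y^*$ is an $AM$-space with a strong unit $e$, so a norm bounded set $A\subseteq Y^*$ is order bounded by some $g$ (giving $-g\circ T\le T^\sim f\le g\circ T$, hence $p$-boundedness), and a norm null sequence $0\le f_n$ converges $e$-uniformly, so $f_n\circ T$ converges $(e\circ T)$-uniformly to zero (giving sequential $p$-continuity via \cite[Thm.62.4]{LZ1}). You instead work on the $Y$ side, using additivity of the $AL$-norm on $Y_+$ to linearize $x\mapsto\lVert Tx\rVert_Y$ into $\tilde\rho\in X^\sim_+$ via Kantorovich's extension theorem, and then derive the single inequality $\lvert T^\sim f\rvert\le\lVert f\rVert_{Y^*}\tilde\rho$ from the Riesz--Kantorovich formula. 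The two dominating functionals coincide --- the strong unit $e$ of $Y^*$ is precisely the linear extension of the norm on $Y_+$, so $e\circ T=\tilde\rho$ --- but your packaging has two small advantages: both conclusions drop out of one master inequality, and the continuity argument applies to arbitrary sequences $f_n\to 0$ rather than only positive ones (the paper's proof is written for $0\le f_n$ and tacitly relies on the reduction $T^\sim f_n=T^\sim f_n^+-T^\sim f_n^-$). The paper's version, on the other hand, avoids invoking the Riesz--Kantorovich formula for $\lvert T^\sim f\rvert$ and the Kantorovich extension theorem, needing only the standard duality $AL\leftrightarrow AM$ with strong unit. Your detailed verification that $f\circ T\in X^\sim$ is also welcome, as the paper dismisses this step with ``clearly.''
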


\begin{proof}
Clearly, if $f\in Y^*$ then $f\circ T$ is order bounded, and so $T^{\sim}(f)\in X^\sim$.
	
We prove that $T^{\sim}$ is $p$-bounded. Let $A\subseteq Y^*$ be a $p$-bounded set in $(Y^*,\lVert\cdot\rVert_{Y^*},\mathbb{R})$ 
then there is $0<c<\infty$ such that $\lVert f\rVert_{Y^*}\leq c$ for all $f\in A$. Since $Y^*$ is an $AM$-space with a strong unit then $A$ is order bounded in $Y^*$; 
i.e., there is a $g\in Y^*_+$ such that $-g\leq f\leq g$ for all $f\in A$. That is, $-g(y)\leq f(y)\leq g(y)$ for any $y\in Y_+$, which implies $-g(Tx)\leq f(Tx)\leq g(Tx)$ for all $x\in X_+.$ 
Thus, $-g\circ T\leq f\circ T\leq g\circ T$ or $-g\circ T\leq T^{\sim}f\leq g\circ T$ for every $f\in A$. Therefore, $T^{\sim}(A)$ is $p$-bounded in $(X^\sim,\lvert\cdot\rvert,X^\sim).$
	
Next, we show that $T^{\sim}$ is sequentially $p$-continuous. Assume $0\leq f_n\xrightarrow{\left\|\cdot\right\|_{Y^*}}0$ in $(Y^*,\lVert\cdot \rVert_{Y^*})$.  
Since $Y^*$ is an $AM$-space with a strong unit, say $e$, then $f_n \xrightarrow{\lVert\cdot\rVert_e} 0$. It follows from \cite[Thm.62.4]{LZ1} that $f_n$ $e$-converges to zero in $Y^*$. 
Thus, there is a sequence $\varepsilon_k\downarrow 0$ in $\mathbb{R}$ such that for all $k\in \mathbb{N}$ there is $n_k\in  \mathbb{N}$ satisfying $f_n\leq\varepsilon_ke$ for all $n\geq n_k$. In particular, $f_n(Tx)\leq\varepsilon_ke(Tx)$ 
for all $x\in X_+$ and for all $n\geq n_k$. From which it follows that $f_n\circ T$ $e$-converges to zero in $X^\sim$ and so $f_n\circ T\oc 0$ in $X^\sim$. 
Hence, $T^{\sim}(f_n)\oc 0$ in $X^\sim$ and $T^{\sim}$ is sequentially $p$-continuous.
\end{proof}

\section{$p$-Compact Operators}

Given normed spaces $X$ and $Y$. Recall that $T\in L(X,Y)$ is said to be compact if $T(B_X)$ is relatively compact in $Y$. 
Equivalently, $T$ is compact iff for any norm bounded sequence $x_n$ in $X$ there is a subsequence $x_{n_k}$ such that the sequence $Tx_{n_k}$ is convergent in $Y$. 
Motivated by this, we introduce the following notions.

\begin{definition}
Let $X$, $Y$ be two LNSs and $T\in L(X,Y)$. Then
\begin{enumerate}
\item[(1)] $T$ is called {\em $p$-compact} if, for any $p$-bounded net $x_\alpha$ in $X$, 
there is a subnet $x_{\alpha_\beta}$ such that $Tx_{\alpha_\beta}\pc y$ in $Y$ for some $y\in Y$.
\item[(2)] $T$ is called {\em sequentially $p$-compact} if, for any $p$-bounded sequence $x_n$ in $X$, 
there is a subsequence $x_{n_k}$ such that $Tx_{n_k}\pc y$ in  $Y$ for some $y\in Y$.
\end{enumerate}
\end{definition}

\begin{exam}\label{seq.p-compact not p-compact}
$($A sequentially $p$-compact operator need not be $p$-compact$)$\\
Let's take the vector lattice
 $$
 c_{\aleph_1} (\mathbb{R}):=\big\{f:\mathbb{R}\to \mathbb{R}:\exists a\in \mathbb{R}, \forall \varepsilon >0, \ \mathbf{card}\big(\{x\in\mathbb{R}: \lvert f(x)-a\rvert\geq \varepsilon\}\big)<{\aleph_1}\big\}.
 $$
Consider the identity operator $I$ on $\big(c_{\aleph_1}(\mathbb{R}),\lvert \cdot \rvert,c_{\aleph_1}(\mathbb{R})\big)$. 
Let $f_n$ be a $p$-bounded sequence in $\big(c_{\aleph_1}(\mathbb{R}),\lvert \cdot \rvert,c_{\aleph_1}(\mathbb{R})\big)$. So there is $g\in c_{\aleph_1}(\mathbb{R})$ such that $0\leq f_n\leq g$ for all $n\in  \mathbb{N}.$\\
For any $n\in \mathbb{N}$, there is $a_n\in \mathbb{R}_+$ such that for all $\varepsilon>0$, $\mathbf{card}\big(\{x\in\mathbb{R}: \lvert f(x)-a_n\rvert\geq \varepsilon\}\big)<{\aleph_1}.$ Clearly the sequence $a_n$ is bounded in $\mathbb{R},$ so there is a subsequence $a_{n_k}$ and $a\in \mathbb{R}$ such that $a_{n_k}\to a$ as $k\to \infty.$ For each $m, k \in  \mathbb{N}$, let $A_{m,n_k}:=\{x\in \mathbb{R}:\lvert f_{n_k}(x)-a_{n_k}\rvert\geq \frac{1}{m}\}.$ Put $A=\bigcup\limits_{m=1}^{\infty}\bigcup\limits_{k=1}^{\infty}A_{m,n_k}$ and let $h=a\chi_{\mathbb{R}\backslash A}$ then $f_{n_k}\oc h$, since order convergence in $c_{\aleph_1}(\mathbb{R})$ is pointwise convergence. Thus, $I$ is sequentially $p$-compact.

On the other hand; let $\mathcal{F}(\mathbb{R})$ be the collection of all finite subsets of $\mathbb{R}$. For each $\alpha \in \mathcal{F}(\mathbb{R})$ let $f_\alpha:= \chi_{\mathbb{R}\backslash \alpha}$. 
Then $f_\alpha\leq \mathds 1 \in c_{\aleph_1}(\mathbb{R})$ and $a_\alpha =1$.  But, for every subnet $f_{\alpha_\beta}$, we have  $f_{\alpha_\beta}(x) \not \to 1$ for any $x \in \mathbb{R}$, so $f_{\alpha_\beta}$ does not converge in order to $\mathds 1$. Therefore, $I$ is not $p$-compact.
\end{exam}

\noindent

In connection with Example \ref{seq.p-compact not p-compact} the following question arises naturally.
\begin{question}
Is it true that every $p$-compact operator is sequentially $p$-compact?
\end{question}

\noindent
\begin{definition}
Let $X$, $Y$ be two LNSs and $T\in L(X,Y)$. Then
\begin{enumerate}
	\item[(1)] $T$ is called {\em $rp$-compact}, if for any $p$-bounded net $x_\alpha$ in $X$, 
	there is a subnet $x_{\alpha_\beta}$ such that $Tx_{\alpha_\beta}\rp y$ in $Y$ for some $y\in Y.$
	\item[(2)] $T$ is called {\em sequentially $rp$-compact}, if for any $p$-bounded sequence $x_n$ in $X$, there is a subsequence $x_{n_k}$ such that $Tx_{n_k}\rp y$ in  $Y$ for some $y\in Y$.
	\end{enumerate}
\end{definition}

\noindent
\begin{rem}\ \
\begin{enumerate}
	\item[(i)] Every $($sequentially$)$ $rp$-compact is $($sequentially$)$ $p$-compact.
	\item[(ii)] The converse of {\em (i)} in the sequential case need not to be true. Consider the identity operator $I$ on $(\ell_\infty,\lvert \cdot \rvert,\ell_\infty)$. 
It can be easily seen that $I$ is sequentially $p$-compact but is not sequentially $rp$-compact. 
	\item[(iii)] We do not know whether or not every $rp$-compact operator is sequentially $rp$-compact and whether or not the vice versa is true.
\end{enumerate}
\end{rem}

In the following example we show that $p$-compact operators generalize many well-known classes of operators.  

\noindent
\begin{exam}\label{examplesofp-compact}\ \ 
\begin{enumerate}

\item[(i)] Let $(X,\lVert\cdot\rVert_X)$ and $(Y,\lVert\cdot\rVert_Y)$ be normed spaces. 
Then $T:(X,\lVert\cdot\rVert_X,\mathbb{R})\to(Y,\lVert\cdot\rVert_Y,\mathbb{R})$ is $($sequentially$)$ $p$-compact 
iff $T:X\to Y$ is compact. 

\item[(ii)] Let $X$ be a vector lattice and $Y$ be a normed space. An operator $T\in L(X,Y)$ is said to be $AM$-compact if $T[-x,x]$ is relatively compact for every $x\in X_+$ $($cf. \cite[Def.3.7.1]{M}$)$. 
Therefore, $T\in L(X,Y)$ is $AM$-compact operator iff $T:(X,\lvert\cdot\rvert,X)\to(Y,\lVert\cdot\rVert_Y,\mathbb{R})$ is $p$-compact.

\item[(iii)] Let $X$ and $Y$ be normed spaces. An operator $T\in L(X,Y)$ is said to be weakly compact if $T(B_X)$ is relatively weakly compact.\\
Let $X$ be a normed space and $(Y,\lVert\cdot\rVert_Y)$ be a normed lattice. Let $E:=\mathbb{R}^{Y^*}$ and consider the LNVL $(Y,p,E)$, 
where $p(y)[f]=|f|(|y|)$ for all $f\in Y^*$. Then $T\in L(X,Y)$ is weakly compact iff $T:(X,\lVert\cdot\rVert_X,\mathbb{R})\to(Y,p,E)$ is sequentially $p$-compact.

\item[(iv)] Let $X$ be a vector lattice and $Y$ be a normed space. An operator $T\in L(X,Y)$ is said to be order weakly compact 
if $T[-x,x]$ is relatively weakly compact for every $x\in X_+$ $($cf. \cite[Def.3.4.1.ii)]{M}$)$.\\ 
Let $X$ be a vector lattice and $(Y,\lVert\cdot\rVert_Y)$ be a normed lattice. Let $E:=\mathbb{R}^{Y^*}$ and consider the LNVL $(Y,p,E)$,  
where $p(y)[f]=|f|(|y|)$ for all $f\in Y^*$. Then $T\in L(X,Y)$ is order weakly compact iff $T:(X,\lvert\cdot\rvert,X)\to(Y,p,E)$ is sequentially $p$-compact.
\end{enumerate}
\end{exam}

\begin{rem}
It is known that any compact operator is norm continuous, but in general we may have a $p$-compact operator which is not $p$-continuous. 
Indeed, consider the following example taken from \cite{MMP}. Denote by $\mathcal{B}$ the Boolean algebra of the Borel subsets of $[0,1]$ equals up to measure null sets. 
Let $\mathcal{U}$ be any ultrafilter on $\mathcal{B}$. Then it can be shown that the linear operator $\varphi_{\mathcal{U}}:L_\infty[0,1]\to\mathbb{R}$ defined by 
$$ 
  \varphi_{\mathcal{U}}(f):=\lim\limits_{A\in\mathcal{U}}\frac{1}{\mu(A)}\int_{A}fd\mu
$$ 
is $AM$-compact which is not order-to-norm continuous; see \cite[Ex.4.2]{MMP}.  
That is, the operator $\varphi_{\mathcal{U}}:(L_\infty[0,1],\lvert\cdot\rvert,L_\infty[0,1])\to(\mathbb{R},\lvert\cdot\rvert,\mathbb{R})$  is $p$-compact, which is not $p$-continuous.
\end{rem}

\begin{exam}
$($A sequentially $p$-compact operator need not be $p$-bounded$)$\\
Let's consider again Lozanovsky's example $\big($cf. \cite[Exer.10,p.289]{AB}$\big)$. If $T:L_1[0,1]\to c_0$ is defined by 
$$
T(f)=\bigg ( \int_{0}^{1} f(x)sinx\ dx,\int_{0}^{1} f(x)sin2x\ dx,... \bigg).
$$
Then it can be shown that $T$ is not order bounded. So $T$ is not $p$-bounded as an operator from the LNS $\big(L_1[0,1],\lvert \cdot \rvert,L_1[0,1]\big)$ into the LNS $\big(c_0,\lvert \cdot \rvert,c_0\big).$

On the other hand, let $f_n$ be a $p$-bounded sequence in $\big(L_1[0,1],\lvert \cdot \rvert,L_1[0,1]\big)$ then $f_n$ is order bounded in $L_1[0,1]$. By a standard diagonal argument there are a subsequence $f_{n_k}$ and a sequence $a=(a_k)_{k\in  \mathbb{N}}\in c_0$ such that $Tf_{n_k}\oc a$ in $c_0.$ Therefore, $T:\big (L_1[0,1],\lvert \cdot \rvert,L_1[0,1] \big )\to (c_0,\lvert \cdot \rvert,c_0)$ is sequentially $p$-compact.
\end{exam}

Since any compact operator is norm bounded, the following question arises naturally.

\begin{question} \label{Q1}
	Is it true that every $p$-compact operator is $p$-bounded?
\end{question}
Regarding (sequentially) $rp$-compact operators, we have the following.
\begin{question}\ \
	\begin{enumerate}
		\item[(1)] Is it true that every $rp$-compact operator is $p$-bounded or equivalently $rp$-continuous?
		\item[(2)] Is it true that every sequentially $rp$-compact operator is $p$-bounded?
	\end{enumerate}
\end{question}

Let $(X,E)$ be a decomposable LNS and $(Y,F)$ be an LNS such that $F$ is order complete then, by \cite[4.1.2,p.142]{K}, each dominated operator $T:X\to Y$ has the exact dominant $\pmb{\lvert} T \pmb{\rvert}$. 
Therefore, the triple $\big(M(X,Y),p,L^\sim(E,F)\big)$ is an LNS, where $p:M(X,Y)\to L_+^\sim(E,F)$ is defined by $p(T)=\pmb{\lvert}T\pmb{\rvert}$ (see, for example \cite[4.2.1,p.150]{K}). Thus, if $T_\alpha$ is a net in $M(X,Y)$ then  $T_\alpha\pc  T$ in $M(X,Y)$, whenever $\pmb{\lvert} T_\alpha-T\pmb{\rvert}\oc 0$ in $L^\sim(E,F)$.

\begin{thm}
Let $(X,p,E)$ be a decomposable LNS and $(Y,q,F)$ be a sequentially $p$-complete LNS such that $F$ is order complete. 
If $T_m$ is a sequence in $M(X,Y)$ and each $T_m$ is sequentially $p$-compact with $T_m\pc T$ in $M(X,Y)$ then $T$ is sequentially $p$-compact. 
\end{thm}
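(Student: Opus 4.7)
My plan is to prove that $T$ is sequentially $p$-compact by the standard recipe of a diagonal extraction followed by a $p$-Cauchy argument that is then closed out by sequential $p$-completeness of $Y$. Let $(x_n)$ be any $p$-bounded sequence in $X$ with $p(x_n)\le e$ for some $e\in E_+$. Using the sequential $p$-compactness of $T_1$, extract a subsequence $(x_n^{(1)})$ of $(x_n)$ with $T_1 x_n^{(1)}\pc y_1$ for some $y_1\in Y$; iteratively, extract $(x_n^{(k)})$ from $(x_n^{(k-1)})$ with $T_k x_n^{(k)}\pc y_k$, and put $z_n:=x_n^{(n)}$. Then $(z_n)$ is a subsequence of $(x_n)$, and for each $m$ the tail $(z_n)_{n\ge m}$ is a subsequence of $(x_n^{(m)})$, so $T_m z_n\pc y_m$ in $Y$ as $n\to\infty$.

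The main estimate comes from the triangle inequality plus the exact dominant of $T-T_m\in M(X,Y)$ (which exists by decomposability of $X$ and order completeness of $F$): for all $m,n,k$,
\[
  q(Tz_n-Tz_k)\le q\bigl((T-T_m)z_n\bigr)+q(T_m z_n-T_m z_k)+q\bigl((T-T_m)z_k\bigr)\le 2\,\pmb{\lvert} T-T_m \pmb{\rvert}(e)+q(T_m z_n-T_m z_k).
\]
Set $a_m:=\pmb{\lvert} T-T_m\pmb{\rvert}(e)\in F_+$. The hypothesis $T_m\pc T$ in $M(X,Y)$ reads $\pmb{\lvert} T-T_m\pmb{\rvert}\oc 0$ in $L^\sim(E,F)$; because $F$ is order complete, the Riesz--Kantorovich identity represents infima of downward directed sets of positive operators pointwise, so evaluation at $e$ yields $a_m\oc 0$ in $F$, and in particular $\inf_m a_m=0$. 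Likewise, for each $m$ the sequence $(T_m z_n)_n$ is $p$-Cauchy, so $q(T_m z_n-T_m z_k)\oc 0$ as $(n,k)\to\infty$.

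Using order completeness of $F$, set $\chi(N):=\sup_{n,k\ge N}q(Tz_n-Tz_k)$ and $\psi_m(N):=\sup_{n,k\ge N}q(T_m z_n-T_m z_k)$; both exist in $F$ for $N$ large, because $T_m z_n\pc y_m$ makes $\{q(T_m z_n-T_m z_k)\}_{n,k\ge N}$ eventually order bounded and a triangle estimate with $m=1$ then bounds $\{q(Tz_n-Tz_k)\}_{n,k\ge N}$. The standard sup-reformulation of order convergence in an order complete lattice gives $\psi_m(N)\downarrow 0$ as $N\to\infty$ for each $m$. Taking $\sup_{n,k\ge N}$ on both sides of the main estimate yields $\chi(N)\le 2a_m+\psi_m(N)$, so $\inf_N\chi(N)\le 2a_m$ for every $m$, and hence $\inf_N\chi(N)\le 2\inf_m a_m=0$. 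Since $\chi(N)$ is decreasing we conclude $\chi(N)\downarrow 0$, proving $(Tz_n)$ is $p$-Cauchy in $Y$; sequential $p$-completeness then gives $Tz_n\pc y$ for some $y\in Y$, so $T$ is sequentially $p$-compact.

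The only genuinely delicate step is the passage from $\pmb{\lvert} T-T_m\pmb{\rvert}\oc 0$ in $L^\sim(E,F)$ to $\pmb{\lvert} T-T_m\pmb{\rvert}(e)\oc 0$ in $F$; this rests on order completeness of $F$ together with the Riesz--Kantorovich representation of pointwise infima of downward directed families of positive operators. The rest is the routine diagonal-plus-Cauchy scheme, rephrased through the $\sup\!/\!\inf$ reformulation of order convergence that is always available in an order complete lattice.
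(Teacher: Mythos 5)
Your argument is correct and follows essentially the same route as the paper's proof: a diagonal extraction over the $T_m$, the estimate $q(T_mz_n-T_jz_n)\le\pmb{\lvert}T_m-T_j\pmb{\rvert}(e)$ via exact dominants (which exist by decomposability of $X$ and order completeness of $F$), the pointwise evaluation of the order convergence $\pmb{\lvert}T_m-T\pmb{\rvert}\oc 0$ at $e$, and sequential $p$-completeness of $Y$. The only difference is organizational: the paper runs the Cauchy argument on the sequence of limits $(y_m)$ and then shows $Tx_{n_k}\pc y$ by a $\limsup$ estimate, whereas you show directly that $(Tz_n)$ is $p$-Cauchy via the $\sup/\inf$ reformulation of order convergence; both are valid instances of the same scheme.
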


\begin{proof}
Let $x_n$ be a $p$-bounded sequence in $X$ then there is $e\in E_+$ such that $p(x_n)\leq e$ for all $n\in \mathbb{N}$. 
By a standard diagonal argument, there exists a subsequence $x_{n_k}$ such that for any $m\in \mathbb{N}$, $T_mx_{n_k}\pc y_m$ for some $y_m\in Y$. 

We show that $y_m$ is a $p$-Cauchy sequence in $Y$. 
\begin{eqnarray*}
q(y_m-y_j)&=&q(y_m-T_mx_{n_k}+T_mx_{n_k}-T_jx_{n_k}+T_jx_{n_k}-y_j)\\ &\leq& q(y_m-T_mx_{n_k})+q(T_mx_{n_k}-T_jx_{n_k})+q(T_jx_{n_k}-y_j).
\end{eqnarray*}
The first and the third terms in the last inequality both order converge to zero as $m\to\infty$ and $j\to\infty$, respectively. Since $T_m\in M(X,Y)$ for all $m\in \mathbb{N}$ then
$$
  q(T_mx_{n_k}-T_jx_{n_k})\leq\pmb{\lvert} T_m-T_j\pmb{\rvert}(p(x_{n_k}))\leq\pmb{\lvert} T_m-T_j\pmb{\rvert} (e).
$$
Since $T_m\pc T$ in $M(X,Y)$ then, by \cite[Thm.VIII.2.3]{V}, it follows that $\pmb{\lvert}T_m-T_j\pmb{\rvert}(e)\oc 0$ in $F$, as $m,j\to\infty$. 
Thus, $q(y_m-y_j)\oc 0$ in $F$ as $m,j\to\infty$. Therefore, $y_m$ is $p$-Cauchy. 
Since $Y$ is sequentially $p$-complete then there is $y\in Y$ such that $q(y_m-y)\oc 0$ in $F$ as $m\to\infty$. Hence,
\begin{eqnarray*}
q(Tx_{n_k}-y)&\leq& q(Tx_{n_k}-T_mx_{n_k})+q(T_mx_{n_k}-y_m)+q(y_m-y)\\ &\leq& \pmb{\lvert} T_m-T 
\pmb{\rvert}(p(x_{n_k}))+q(T_mx_{n_k}-y_m)+q(y_m-y)\\ &\leq& \pmb{\lvert} T_m-T\pmb{\rvert} (e)+q(T_mx_{n_k}-y_m)+q(y_m-y).
\end{eqnarray*}
Fix $m\in \mathbb{N}$ and let $k\to\infty$ then 
$$
  \limsup\limits_{k\to\infty}q(Tx_{n_k}-y)\leq\pmb{\lvert}T_m-T\pmb{\rvert} (e)+q(y_m-y).
$$ 
But $m\in \mathbb{N}$ is arbitrary, so $\limsup\limits_{k\to\infty}q(Tx_{n_k}-y)=0$. Hence,
$q(Tx_{n_k}-y)\oc 0$. Therefore, $T$ is sequentially $p$-compact.
\end{proof}

\begin{prop}\label{leftandrightmultiplication}
Let $(X,p,E)$ be an LNS and $R,T,S \in L(X)$.
\begin{enumerate}
\item[(i)] If $T$ is $($sequentially$)$  $p$-compact and $S$ is $($sequentially$)$ $p$-continuous then $S\circ T$ is $($sequentially$)$ $p$-compact.
\item[(ii)] If $T$ is $($sequentially$)$ $p$-compact and $R$ is $p$-bounded then $T\circ R$ is $($sequentially$)$ $p$-compact.
\end{enumerate}
\end{prop}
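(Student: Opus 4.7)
For both parts, the plan is to unwind the definitions and chain them together; neither argument requires a clever idea, but I should be careful with how subnets pass through $p$-continuous maps and how $p$-bounded sets behave under $p$-bounded operators.

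For part (i), I would start with an arbitrary $p$-bounded net $(x_\alpha)$ in $X$. Since $T$ is $p$-compact, there exist a subnet $(x_{\alpha_\beta})$ and some $y\in X$ with $Tx_{\alpha_\beta}\pc y$, i.e., $p(Tx_{\alpha_\beta}-y)\oc 0$. Because $S$ is linear, the sequential/net definition of $p$-continuity (which is literally ``$z_\gamma\pc 0\Rightarrow Sz_\gamma\pc 0$'') extends immediately to ``$z_\gamma\pc z\Rightarrow Sz_\gamma\pc Sz$'' by applying it to $z_\gamma-z$. Hence $S(Tx_{\alpha_\beta})\pc Sy$, which gives $(S\circ T)x_{\alpha_\beta}\pc Sy$. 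This shows $S\circ T$ is $p$-compact. The sequential case is verbatim the same with sequences and subsequences replacing nets and subnets; here I rely on the fact that a subsequence of a $p$-bounded sequence is $p$-bounded and a sequential $p$-continuous operator maps $p$-convergent sequences to $p$-convergent sequences.

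For part (ii), take a $p$-bounded net $(x_\alpha)$ in $X$, so there is $e\in E_+$ with $p(x_\alpha)\leq e$ for all $\alpha$. Since $R$ is $p$-bounded, the set $\{Rx_\alpha:\alpha\}$ is $p$-bounded in $X$, so there exists $e'\in E_+$ with $p(Rx_\alpha)\leq e'$ for all $\alpha$. In particular the net $(Rx_\alpha)$ is $p$-bounded, and so the $p$-compactness of $T$ produces a subnet $(Rx_{\alpha_\beta})$ and $y\in X$ with $T(Rx_{\alpha_\beta})\pc y$, i.e.\ $(T\circ R)x_{\alpha_\beta}\pc y$. Hence $T\circ R$ is $p$-compact. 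Again the sequential version is identical.

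There is essentially no obstacle: the only minor subtlety is ensuring that $p$-continuity (stated at $0$) transfers to convergence to an arbitrary limit, which follows from linearity, and that the image of a $p$-bounded net under a $p$-bounded operator is again $p$-bounded as a net (not merely as a set), which is automatic from the definition of $p$-boundedness in terms of a single dominating $e\in E_+$.
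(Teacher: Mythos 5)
Your proof is correct and follows essentially the same route as the paper's: apply $p$-compactness of $T$ to get a convergent subnet, then push it through $S$ by $p$-continuity in part (i), and in part (ii) first pass the $p$-bounded net through $R$ before invoking $p$-compactness of $T$. The extra remarks you make (translating $p$-continuity at $0$ to an arbitrary limit via linearity, and noting that the image net is $p$-bounded via a single dominating element) are implicit in the paper's argument and are correctly justified.
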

 
\begin{proof}
(i) Assume $x_\alpha$ to be a $p$-bounded net in $X$. Since $T$ is $p$-compact, there are a subnet $x_{\alpha_{\beta}}$ and $x\in X$ such that $p(Tx_{\alpha_{\beta}}-x)\oc 0$. It follows from the $p$-continuity of $S$ that $p\big(S(Tx_{\alpha_{\beta}})-Sx\big)\oc 0$. Therefore, $S\circ T$ is $p$-compact.
 
(ii) Assume $x_\alpha$ to be a $p$-bounded net in $X$. Since $R$ is $p$-bounded then $Rx_\alpha$ is $p$-bounded. 
Now, the $p$-compactness of $T$ implies that there are a subnet  $x_{\alpha_{\beta}}$ and $z\in X$ such that $p\big(T(Rx_{\alpha_{\beta}})-z\big)\oc 0$. Therefore, $T\circ R$ is $p$-compact.
 
The sequential case is analogous.
\end{proof}

\begin{prop}\label{compactimpliesseq.p-compact}
Let $(X,p,E)$ be an LNS, where $(E,\lVert\cdot\rVert_E)$ is a normed lattice and $(Y,m,F)$ be an LNS, where $(F,\lVert\cdot\rVert_F)$ is a Banach lattice. 
If $T:(X,p\text{-}\lVert\cdot\rVert_E)\to(Y,m\text{-}\lVert\cdot\rVert_F)$ is compact then $T:(X,p,E)\to(Y,m,F)$ is sequentially $p$-compact.  
\end{prop}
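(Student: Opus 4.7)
My plan is to start from a $p$-bounded sequence $(x_n)$ in $(X,p,E)$ and convert $p$-boundedness into norm-boundedness in the mixed-normed space. By definition of $p$-bounded there is $e\in E_+$ with $p(x_n)\le e$ for all $n$, and applying $\lVert\cdot\rVert_E$ gives $p\text{-}\lVert x_n\rVert_E=\lVert p(x_n)\rVert_E\le \lVert e\rVert_E$. So $(x_n)$ sits in a norm-bounded set of $(X,p\text{-}\lVert\cdot\rVert_E)$.

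Next, I would invoke the hypothesis that $T:(X,p\text{-}\lVert\cdot\rVert_E)\to(Y,m\text{-}\lVert\cdot\rVert_F)$ is compact (in the normed-space sense). Because $(x_n)$ is norm-bounded, compactness supplies a subsequence $(x_{n_k})$ and a point $y\in Y$ with $m\text{-}\lVert Tx_{n_k}-y\rVert_F\to 0$, i.e.\ $\lVert m(Tx_{n_k}-y)\rVert_F\to 0$ in the Banach lattice $F$.

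The final step is to upgrade this norm convergence in $F$ to order convergence. Since $(F,\lVert\cdot\rVert_F)$ is a Banach lattice and $m(Tx_{n_k}-y)\xrightarrow{\lVert\cdot\rVert_F}0$, the standard fact cited already in the excerpt (see \cite[Thm.VII.2.1]{V}, used for instance in the proof of Proposition \ref{seqpcontimpliesnormcont}) provides a further subsequence $(x_{n_{k_j}})$ with $m(Tx_{n_{k_j}}-y)\xrightarrow{o}0$ in $F$. By definition this is exactly $Tx_{n_{k_j}}\pc y$ in $(Y,m,F)$, so $T$ is sequentially $p$-compact.

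I do not foresee any real obstacle: the argument is essentially a two-line reduction—translate $p$-boundedness into mixed-norm boundedness, apply compactness, and then use the Banach-lattice fact to pass from norm to order convergence. The only subtlety worth pointing out is that the Banach-lattice completeness of $F$ is used exclusively in the last step, while the normed-lattice structure of $E$ is used only in the first step to bound $\lVert p(x_n)\rVert_E$; neither step requires any decomposability or order-continuity assumption on $p$, $m$, $E$, or $F$.
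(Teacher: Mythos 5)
Your proposal is correct and coincides step for step with the paper's own proof: translate $p$-boundedness into mixed-norm boundedness via $\lVert p(x_n)\rVert_E\leq\lVert e\rVert_E$, apply compactness to obtain a norm-convergent subsequence of images, and pass to a further subsequence that order converges in $F$ using \cite[Thm.VII.2.1]{V}. Nothing further is needed.
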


\begin{proof}
Let $x_n$ be a $p$-bounded sequence in $(X,p,E)$. Then there is $e\in E$ such that $p(x_n)\leq e$ for all $n\in \mathbb{N}$. So $\lVert p(x_n)\rVert_E\leq\lVert e\rVert_E<\infty$. 
Hence, $x_n$ is norm bounded in $(X,p\text{-}\lVert\cdot\rVert_E)$. Since $T$ is compact then there are a subsequence $x_{n_{k}}$ and $y\in Y$ 
such that $m\text{-}\lVert Tx_{n_{k}}-y\rVert_F\to 0$ or $\lVert m(Tx_{n_{k}}-y)\rVert_F\to 0$. Since $(F,\lVert\cdot\rVert_F)$ is a Banach lattice 
then, by \cite[Thm.VII.2.1]{V} there is a further subsequence $x_{n_{k_j}}$ such that $m(Tx_{n_{k_j}}-y)\oc 0$. Therefore, $T:(X,p,E)\to(Y,m,F)$ is sequentially $p$-compact.   
\end{proof}

\begin{prop}\label{seq.p-compactimpliescompact}
Let $(X,p,E)$ be an LNS, where $(E,\lVert\cdot\rVert_E)$ is an $AM$-space with a strong unit. Let $(Y,m,F)$ be an LNS, where $(F,\lVert\cdot\rVert_F)$ is an order continuous normed lattice. 
If $T:(X,p,E)\to(Y,m,F)$ is sequentially $p$-compact then $T:(X,p\text{-}\lVert\cdot\rVert_E)\to(Y,m\text{-}\lVert\cdot\rVert_F)$ is compact.
\end{prop}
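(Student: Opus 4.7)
The plan is to verify compactness of $T:(X,p\text{-}\lVert\cdot\rVert_E)\to(Y,m\text{-}\lVert\cdot\rVert_F)$ by taking an arbitrary norm bounded sequence $(x_n)$ in $(X,p\text{-}\lVert\cdot\rVert_E)$ and producing a subsequence whose $T$-image converges in norm. The two hypotheses on $E$ and $F$ dovetail for this: the AM-space-with-strong-unit structure on $E$ turns norm boundedness into $p$-boundedness (so that sequential $p$-compactness applies), while the order continuity of $F$ turns the resulting order convergence of $m(Tx_{n_k}-y)$ into norm convergence.

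In detail, suppose $\lVert p(x_n)\rVert_E\le M$ for all $n$. Since $(E,\lVert\cdot\rVert_E)$ is an AM-space with strong unit, there exists $e\in E_+$ such that (up to equivalence of norms) $\lVert u\rVert_E=\inf\{\lambda>0:\lvert u\rvert\le \lambda e\}$, and therefore $p(x_n)\le Me$ for every $n$. Hence $(x_n)$ is $p$-bounded in $(X,p,E)$. The sequential $p$-compactness of $T$ now yields a subsequence $x_{n_k}$ and $y\in Y$ with $m(Tx_{n_k}-y)\oc 0$ in $F$. Because $(F,\lVert\cdot\rVert_F)$ is order continuous, this implies $\lVert m(Tx_{n_k}-y)\rVert_F\to 0$, i.e.\ $m\text{-}\lVert Tx_{n_k}-y\rVert_F\to 0$ in $(Y,m\text{-}\lVert\cdot\rVert_F)$. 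Since the target is a metric space, producing a convergent image subsequence for every norm bounded sequence shows that $T(B_{(X,p\text{-}\lVert\cdot\rVert_E)})$ is relatively compact, so $T$ is compact.

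The only potentially nontrivial point is the first step, namely the identification of the closed unit ball of an AM-space with strong unit (up to equivalence) with an order interval $[-e,e]$, which lets us pass from the norm bound $\lVert p(x_n)\rVert_E\le M$ to the order bound $p(x_n)\le Me$. This is a standard structural fact about AM-spaces. Once it is used, the rest of the argument is a clean composition of the two hypotheses with the well-known implication that $o$-convergence entails norm convergence in any order continuous normed lattice, and no substantial obstacle is expected.
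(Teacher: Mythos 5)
Your proof is correct and follows essentially the same route as the paper's: use the strong unit of the $AM$-space $E$ to convert norm boundedness of $p(x_n)$ into order boundedness (hence $p$-boundedness of $x_n$), apply sequential $p$-compactness to extract a subsequence with $m(Tx_{n_k}-y)\oc 0$, and invoke order continuity of $F$ to upgrade this to norm convergence. The only difference is cosmetic: the paper simply cites the standard fact that norm bounded sets in an $AM$-space with strong unit are order bounded, rather than spelling out the identification of the norm with the order unit norm.
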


\begin{proof}
Let $x_n$ be a normed bounded sequence in $(X,p\text{-}\lVert\cdot\rVert_E)$. That is: $p\text{-}\lVert x_n\rVert_E=\lVert p(x_n)\rVert_E\leq k<\infty$ for all $n\in \mathbb{N}$. 
Since  $(E,\lVert\cdot\rVert_E)$ is an $AM$-space with a strong unit then $p(x_n)$ is order bounded in $E$. Thus, $x_n$ is a $p$-bounded sequence in  $(X,p,E)$. 
Since $T$ is sequentially $p$-compact, there are a subsequence $x_{n_{k}}$ and $y\in Y$ such that $m(Tx_{n_{k}}-y)\oc 0$ in $F$. 
Since $(F,\lVert\cdot\rVert_F)$ is order continuous then $\lVert m(Tx_{n_{k}}-y)\rVert_F\to 0$ or $m\text{-}\lVert Tx_{n_{k}}-y\rVert_F\to 0$. 
Thus, the operator $T:(X,p\text{-}\lVert\cdot\rVert_E)\to(Y,m\text{-}\lVert\cdot\rVert_F)$ is compact.
\end{proof}

The following result could be known but since we do not have a reference for it we include a proof for the sake of completeness.

\begin{lem}\label{atomipointwiseconvergence}
Let $X$ be an atomic vector lattice. Then a net $x_\alpha$ is $uo$-null iff it is pointwise null, $($that is, $\lvert x_\alpha \rvert \wedge a \oc 0$ for all atoms in $X$$)$. 
\end{lem}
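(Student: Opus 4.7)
The forward direction $(\Rightarrow)$ is immediate from the definition of $uo$-null, since atoms lie in $X_+$.

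For $(\Leftarrow)$, fix $u \in X_+$ and set $y_\alpha := |x_\alpha| \wedge u$, so that $0 \le y_\alpha \le u$; the aim is to show $y_\alpha \oc 0$. First I would develop the band-projection machinery available in any atomic Archimedean vector lattice. For each atom $a$ the principal band equals $\mathbb{R}a$, and setting $c_a(x) := \sup\{\lambda \ge 0 : \lambda a \le x\}$ for $x \in X_+$ produces a finite number (finite by the Archimedean property of $X$), with $P_a x := c_a(x)a$ being a band projection onto $\mathbb{R}a$: indeed, any nonzero $(x - P_a x) \wedge a$ would be a positive multiple of $a$, contradicting the maximality of $c_a(x)$. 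Setting $P_F := \sum_{a \in F} P_a$ for finite sets $F$ of pairwise inequivalent atoms, I would then establish $u - P_F u \downarrow 0$: if $v := \sup_F P_F u$ were strictly less than $u$, atomicity yields an atom $b \le u - v$; then $b \le v$ (since $b \le c_b(u)b = P_{\{b\}}u \le v$) combined with $b \le u - v$ gives $(c_b(u)+1)b \le u$, contradicting the maximality of $c_b(u)$.

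The hypothesis enters as follows: for each atom $a$ one checks $P_a y_\alpha = y_\alpha \wedge c_a(u)a \le |x_\alpha| \wedge c_a(u)a$, and the identity $cx \wedge cy = c(x \wedge y)$ gives $|x_\alpha| \wedge ca \le (c \vee 1)(|x_\alpha| \wedge a) \oc 0$, so $P_a y_\alpha \oc 0$. Summing over a finite $F$ yields $P_F y_\alpha \oc 0$ in $\alpha$, and the band decomposition $y_\alpha = P_F y_\alpha + P_F^\perp y_\alpha$ together with $P_F^\perp y_\alpha \le P_F^\perp u = u - P_F u$ gives the domination $y_\alpha \le P_F y_\alpha + (u - P_F u)$.

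The main obstacle is the final step: packaging these partial estimates into a single decreasing net witnessing $y_\alpha \oc 0$. For each $F$, pick a witnessing net $w^F_\beta \downarrow 0$ in $X$ with $P_F y_\alpha \le w^F_\beta$ eventually in $\alpha$, and form the set
\[
 S := \bigl\{w^F_\beta + (u - P_F u) : F \text{ a finite set of atoms},\ \beta \in B_F\bigr\};
\]
iterating the identity $\inf_\beta(c + x_\beta) = c + \inf_\beta x_\beta$ gives $\inf S = 0$ in $X$. The net $\xi_G := \bigwedge_{s \in G} s$, indexed by finite $G \subseteq S$ ordered by reverse inclusion, then decreases to $0$ in $X$, and for each such $G$ only finitely many individual eventualities need be intersected to conclude $y_\alpha \le \xi_G$ eventually. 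This is exactly the definition of $y_\alpha \oc 0$. The delicate point throughout is that every appealed-to supremum and infimum must exist in $X$ itself, with no passage to a Dedekind completion, which is why atomicity and Archimedeanness must be used in concert.
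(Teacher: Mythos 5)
Your proof is correct and follows essentially the same route as the paper's: both decompose $|x_\alpha|\wedge u$ via the band projections onto finitely many atoms, use the hypothesis to make the finite part eventually small and atomicity to show the tail $u-P_Fu$ decreases to zero, and then assemble a dominating net indexed by finite sets of atoms together with a smallness parameter. You merely supply the details (the construction of $P_a$, the proof that $u-P_Fu\downarrow 0$, and the final packaging via finite infima, where the paper instead uses the explicit net $\frac{1}{n}\sum_{a\in F}a+\sum_{a\in\Omega\setminus F}P_au$) that the paper dismisses as ``easy to see''; the only slip is that your net $\xi_G$ should be indexed by finite subsets of $S$ ordered by inclusion, not reverse inclusion, so that it actually decreases.
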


\begin{proof}
The forward implication is trivial. \\
For the converse, let $x_\alpha$ be a pointwise null net in $X$. Without loss of generality, we may assume that $x_\alpha\geq 0$. Take $u\in X_+$. Then we need to show that $x_\alpha \wedge u\oc 0$. 
Consider the following directed set $\Delta =\mathcal{P}_{fin}(\Omega)\times\mathbb{N}$, where $\Omega$ is the collection of all atoms in $X$. For each $\delta=(F,n)\in\Delta$, put $y_\delta=\frac{1}{n}\sum\limits_{a\in F}a+\sum\limits_{a\in \Omega\setminus F}P_au$, where $P_a$ denotes the band projection onto 
$span\{a\}$. It is easy to see that $y_\delta\downarrow 0$ and for any $\delta \in \Delta$ there is an $\alpha_\delta$ such that for any $\alpha\geq \alpha_\delta$ we have that 
$0\leq x_\alpha \wedge u\leq y_\delta$. Therefore, $x_\alpha \wedge u\oc 0$.
\end{proof}

\begin{rem}\label{atomic}
If $X$ is an atomic $KB$-space then every order bounded net has an order convergent subnet. Indeed, let $x_\alpha$ be an order bounded net in $X$. 
Then clearly $x_\alpha$ is norm bounded and so, by \cite[Thm.7.5]{KMT} there is a subnet $x_{\alpha_{\beta}}$ such that $x_{\alpha_{\beta}}\unc x$ for some $x\in X$. 
But, in atomic order continuous Banach lattices $un$-convergence coincides with pointwise convergence $($see \cite[Cor. 4.14]{KMT}$)$. Therefore, by 
Lemma \ref{atomipointwiseconvergence} $x_{\alpha_\beta}\uoc x$. Thus $x_{\alpha_\beta}\oc x$, since $x_\alpha$ is order bounded. 
\end{rem}

\begin{prop}\label{orderboundedimpliesp-compactness}
Let $X$ be a vector lattice and $(Y,m,F)$ be an $op$-continuous LNVL such that $Y$ is atomic $KB$-space. If $T\in L^\sim(X,Y)$ then $T:(X,\lvert\cdot\rvert,X)$ $\to(Y,m,F)$ is $p$-compact.
\end{prop}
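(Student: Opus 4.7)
The goal is to show that an arbitrary $p$-bounded net in $(X,|\cdot|,X)$ — which is the same thing as an order-bounded net in the vector lattice $X$ — admits, after passing to a subnet, an image under $T$ that is $p$-convergent in $(Y,m,F)$.

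The plan is to reduce the problem to Remark \ref{atomic} by means of the hypothesis that $T$ is order bounded, and then convert order convergence in $Y$ into $p$-convergence by using the $op$-continuity of $(Y,m,F)$. First, I would start with a $p$-bounded net $(x_\alpha)$ in $(X,|\cdot|,X)$; by definition, this means that $|x_\alpha|\oc 0$ is bounded above by some $u \in X_+$, i.e.\ $|x_\alpha|\le u$ for all $\alpha$, so $(x_\alpha)$ lies in the order interval $[-u,u]$. Since $T \in L^\sim(X,Y)$, the image set $\{T x_\alpha\}$ is order bounded in $Y$: indeed, $T[-u,u]$ is an order bounded subset of $Y$ because $T$ sends order bounded sets to order bounded sets.

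Next, since $Y$ is an atomic $KB$-space, Remark \ref{atomic} applies to the order-bounded net $(Tx_\alpha)$ in $Y$: there exist a subnet $(T x_{\alpha_\beta})$ and some $y \in Y$ such that $T x_{\alpha_\beta} \oc y$ in $Y$. Finally, the $op$-continuity of the LNVL $(Y,m,F)$ applied to $T x_{\alpha_\beta} - y \oc 0$ yields $m(T x_{\alpha_\beta} - y) \oc 0$ in $F$, which is exactly $T x_{\alpha_\beta} \pc y$ in $(Y,m,F)$. Hence $T:(X,|\cdot|,X)\to(Y,m,F)$ is $p$-compact.

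There is really no serious obstacle here: the proof is a direct three-step chain (order boundedness of $T$ $\Rightarrow$ Remark \ref{atomic} gives an order-convergent subnet $\Rightarrow$ $op$-continuity upgrades order convergence to $p$-convergence). The only place where one must be careful is the first step, namely verifying that $T\in L^\sim(X,Y)$ indeed maps the order interval $[-u,u]$ into an order bounded subset of $Y$; this is immediate from the definition of an order bounded operator, but it is the pivot that allows Remark \ref{atomic} to be invoked.
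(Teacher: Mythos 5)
Your proof is correct and follows exactly the same route as the paper's: a $p$-bounded net in $(X,\lvert\cdot\rvert,X)$ is order bounded, order boundedness of $T$ makes $(Tx_\alpha)$ order bounded in the atomic $KB$-space $Y$, Remark \ref{atomic} yields an order-convergent subnet, and $op$-continuity upgrades this to $p$-convergence. (Only a typographical slip: the phrase ``$|x_\alpha|\oc 0$ is bounded above by some $u$'' should simply read ``$|x_\alpha|$ is bounded above by some $u$''.)
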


\begin{proof}
Let $x_\alpha$ be a $p$-bounded net in $(X,\lvert\cdot\rvert,X)$ then $x_\alpha$ is order bounded in $X$. Since $T$ is order bounded then $Tx_\alpha$ is order bounded in $Y$, 
which is an atomic $KB$-space. So, by Remark \ref{atomic}, there are a subnet $x_{\alpha_{\beta}}$ and $y\in Y$ such that $Tx_{\alpha_{\beta}}\oc y$. Since $(Y,m,F)$ is $op$-continuous 
then $m(Tx_{\alpha_{\beta}}-y)\oc 0$. Thus, $T$ is $p$-compact.
\end{proof} 

\begin{prop}\label{p-bdd implies p-compact}
Let $(X,p,E)$ and $(Y,\lvert\cdot\rvert,Y)$ be two LNVLs such that $Y$ is an atomic $KB$-space. If $T:(X,p,E)\to(Y,\lvert\cdot\rvert,Y)$ is $p$-bounded then $T$ is $p$-compact.
\end{prop}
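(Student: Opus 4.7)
The plan is to mirror the strategy of Proposition \ref{orderboundedimpliesp-compactness}, essentially reducing everything to Remark \ref{atomic}. The key observation is that in the LNVL $(Y,\lvert\cdot\rvert,Y)$, $p$-convergence coincides with order convergence, so the only thing that needs to be produced is an order convergent subnet of $Tx_\alpha$ in $Y$.

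First, I would start with an arbitrary $p$-bounded net $x_\alpha$ in $(X,p,E)$. The hypothesis that $T$ is $p$-bounded then says that $Tx_\alpha$ is $p$-bounded in $(Y,\lvert\cdot\rvert,Y)$; unwinding the definition of the lattice norm $\lvert\cdot\rvert$ on $Y$, this just means that $Tx_\alpha$ is order bounded in $Y$, i.e.\ there exists $u\in Y_+$ with $\lvert Tx_\alpha\rvert\leq u$ for all $\alpha$.

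Next I would invoke Remark \ref{atomic}: since $Y$ is an atomic $KB$-space and $Tx_\alpha$ is order bounded, there exist a subnet $Tx_{\alpha_\beta}$ and an element $y\in Y$ with $Tx_{\alpha_\beta}\oc y$ in $Y$. By definition of $p$-convergence in the LNVL $(Y,\lvert\cdot\rvert,Y)$, this is exactly $Tx_{\alpha_\beta}\pc y$, which establishes $p$-compactness of $T$.

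There is no real obstacle here; the statement is essentially a re-packaging of Remark \ref{atomic} once one notes that $p$-boundedness of $T$ into $(Y,\lvert\cdot\rvert,Y)$ is the same thing as sending $p$-bounded sets to order bounded sets, and that $p$-convergence in $(Y,\lvert\cdot\rvert,Y)$ is just order convergence in $Y$. The only point to double-check is that the remark, which is stated for order bounded nets, applies equally well to the image net $Tx_\alpha$, and this is immediate from the previous paragraph.
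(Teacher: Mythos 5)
Your proof is correct and follows exactly the same route as the paper's: use $p$-boundedness of $T$ to get that $Tx_\alpha$ is order bounded in $Y$, then apply Remark \ref{atomic} to extract an order convergent subnet, which is the required $p$-convergent subnet since $p$-convergence in $(Y,\lvert\cdot\rvert,Y)$ is order convergence. Nothing further is needed.
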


\begin{proof}
Let $x_\alpha$ be a $p$-bounded net in $X$. Since $T$ is $p$-bounded then $Tx_\alpha$ is order bounded in $Y$. Since $Y$ is an atomic $KB$-space 
then, by Remark \ref{atomic}, there is a subnet $x_{\alpha_\beta}$ such that $Tx_{\alpha_\beta}\oc y$ for some $y\in Y$. Therefore, $T$ is $p$-compact.
\end{proof}

\begin{rem}\ \
\begin{enumerate}
\item[(i)] We can not omit the atomicity in Propositions \ref{orderboundedimpliesp-compactness} and \ref{p-bdd implies p-compact}$;$ 
consider the identity operator $I$ on $(L_{1}[0,1],\lvert\cdot\rvert,L_1[0,1])$ 
then the sequence of Rademacher functions is order bounded and has no order convergent subsequence, so $I$ is not $p$-compact.
\item[(ii)]	The identity operator $I$ on $(\ell_1,\lvert\cdot\rvert,\ell_1)$ satisfies 
the conditions of Proposition \ref{orderboundedimpliesp-compactness}, so $I$ is $p$-compact. 
This shows that the identity operator on an infinite dimensional space can be $p$-compact.
\item[(iii)] We do not know whether or not the identity operator $I$ on the LNS $(L_\infty[0,1],\lvert\cdot\rvert,L_\infty[0,1])$ could be $p$-compact or sequentially $p$-compact.
\end{enumerate}
\end{rem}

\begin{prop}\label{rankoneoperator}
Let $(X,p,E)$ and $(Y,m,F)$ be LNSs. Let $T:(X,p,E)\to(Y,m,F)$ be a $p$-bounded finite rank operator. Then $T$ is $p$-compact.
\end{prop}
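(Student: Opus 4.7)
The plan is to reduce to a finite-dimensional compactness argument. Write $Tx=\sum_{i=1}^n f_i(x)y_i$ with $y_1,\dots,y_n$ a basis of $T(X)$ and $f_i\colon X\to\mathbb{R}$ linear. Set $Z:=T(X)$ and equip $Z$ with the Euclidean norm $\|\sum c_iy_i\|_2:=(\sum c_i^2)^{1/2}$.

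The easy half is that $\|\cdot\|_2$-convergence in $Z$ forces $p$-convergence in $(Y,m,F)$: if $z_k\to z$ in $\|\cdot\|_2$, write $z_k-z=\sum_i\varepsilon_i^k y_i$ with $\varepsilon_i^k\to 0$ in $\mathbb{R}$; then $m(z_k-z)\le\sum_i|\varepsilon_i^k|\,m(y_i)$, and each summand is order null in $F$ because, for every $j\in\mathbb{N}$, eventually $|\varepsilon_i^k|\,m(y_i)\le m(y_i)/j$ while $m(y_i)/j\downarrow 0$ by the Archimedean property.

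Given a $p$-bounded net $x_\alpha$ in $X$, $p$-boundedness of $T$ supplies $g\in F_+$ with $m(Tx_\alpha)\le g$ for all $\alpha$. The crucial step is to show that $B_g:=\{z\in Z:m(z)\le g\}$ is bounded in $(Z,\|\cdot\|_2)$. Suppose not: pick $z_k\in B_g$ with $\|z_k\|_2\to\infty$ and normalize $w_k:=z_k/\|z_k\|_2$. Then $m(w_k)\le g/\|z_k\|_2$, which is order null (same Archimedean argument, now applied to the scalar sequence $1/\|z_k\|_2\to 0$ and the fixed vector $g$). By compactness of the Euclidean unit sphere of $Z$, a subsequence $w_{k_j}\to w$ with $\|w\|_2=1$. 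The easy half gives $m(w_{k_j}-w)\oc 0$, so the triangle inequality yields $m(w)\le m(w-w_{k_j})+m(w_{k_j})$, whose right-hand side is order null in $j$. Since a fixed nonnegative vector dominated by an order-null net in an Archimedean lattice must vanish, $m(w)=0$ and thus $w=0$, contradicting $\|w\|_2=1$.

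Once the claim is established, $\{Tx_\alpha\}\subseteq B_g$ lies in a $\|\cdot\|_2$-bounded subset of the finite-dimensional space $(Z,\|\cdot\|_2)$; by compactness of closed Euclidean balls, some subnet $Tx_{\alpha_\beta}$ converges in $\|\cdot\|_2$ to an element $y\in Z$, and by the easy half $Tx_{\alpha_\beta}\pc y$ in $(Y,m,F)$. The main obstacle is precisely the boundedness claim: because $m$ takes values in the general lattice $F$ rather than $\mathbb{R}$, the classical equivalence-of-norms theorem on finite-dimensional spaces cannot be invoked directly, and the normalization-and-contradiction argument above is what replaces it.
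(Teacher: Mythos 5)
Your proof is correct, but it takes a genuinely different route from the paper's. The paper reduces at once to the rank-one case $Tx=f(x)y_0$ with $f$ a $p$-bounded functional, extracts a subnet along which $f(x_{\alpha_\beta})\to\lambda$ by Bolzano--Weierstrass, and concludes from $m(Tx_{\alpha_\beta}-\lambda y_0)=\lvert f(x_{\alpha_\beta})-\lambda\rvert\, m(y_0)\oc 0$. That reduction is stated as a ``without loss of generality,'' and it silently assumes that the coordinate functionals of a $p$-bounded finite-rank operator are themselves $p$-bounded --- which is immediate for rank one (if $\lvert f(a_k)\rvert\to\infty$ with $\lvert f(a_k)\rvert m(y_0)\le g$, then $m(y_0)=0$, so $y_0=0$) but is exactly the nontrivial point for rank $n$. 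Your argument confronts that point directly: the claim that $B_g=\{z\in T(X): m(z)\le g\}$ is Euclidean-bounded is a lattice-valued analogue of the equivalence of norms on a finite-dimensional space, proved by the classical normalize-and-contradict scheme, and it is precisely equivalent to the $p$-boundedness of all the coordinate functionals at once. Each step checks out: the ``easy half'' is the Archimedean estimate $\lvert\varepsilon\rvert\, m(y_i)\le m(y_i)/j\downarrow 0$; the limit $w$ of the normalized sequence satisfies $0\le m(w)\le m(w-w_{k_j})+m(w_{k_j})$ with order-null right-hand side, forcing $m(w)=0$ and hence $w=0$ by definiteness of the vector norm, against $\lVert w\rVert_2=1$; and the final subnet extraction uses only compactness of closed balls in the finite-dimensional range together with the easy half. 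In short, your proof is somewhat longer but self-contained and fills the gap in the paper's ``WLOG,'' while the paper's proof is shorter at the cost of leaving the rank-$n$ to rank-one reduction unjustified.
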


\begin{proof}
Without lost of generality, we may suppose that $T$ is given by $Tx=f(x)y_0$ for some $p$-bounded functional $f:(X,p,E)\to(\mathbb{R},\lvert \cdot \rvert,\mathbb{R})$
and $y_0\in Y$.

Let $x_\alpha$ be a $p$-bounded net in $X$ then $f(x_\alpha)$ is bounded in $\mathbb{R}$, so there is a subnet $x_{\alpha_{\beta}}$ such that $f(x_{\alpha_{\beta}})\to\lambda$ for some $\lambda\in\mathbb{R}$. 
Now, $m(Tx_{\alpha_{\beta}}-\lambda y_0)=m\big((fx_{\alpha_{\beta}}-\lambda)y_0\big)=\lvert f(x_{\alpha_{\beta}})-\lambda\rvert m(y_0)\oc 0$ in $F$. Thus, $T$ is $p$-compact.
\end{proof}

\begin{exam}
$($The $p$-boundedness of $T$ in Proposition \ref{rankoneoperator} can not be removed$)$
\noindent	
Let $(X,p,E)$ be an LNS and $f:(X,p,E)\to(\mathbb{R},\lvert\cdot\rvert,\mathbb{R})$ be a linear functional which is not $p$-bounded. 
Then there is a $p$-bounded sequence $x_n$ such that $\lvert f(x_n)\rvert\geq n$ for all $n\in \mathbb{N}$. 
Therefore, any rank one operator $T:(X,p,E)\to(Y,m,F)$ given by the rule $Tx=f(x)y_0$, where $0\ne y_0\in Y$, is not $p$-compact.
\end{exam}

\noindent
Recall that: 
\begin{enumerate}
\item[(1)] A subset $A$ of a normed lattice $(X,\lVert\cdot\rVert)$ is called \textit{almost order bounded} if, for any $\varepsilon>0$, there is $u_\varepsilon\in X_+$ such that 
$$
  \lVert (\lvert x\rvert-u_\varepsilon)^+\rVert=\lVert\lvert x\rvert-u_\varepsilon\wedge\lvert x\rvert\rVert\leq\varepsilon \ \ \ \ (\forall x\in A).
$$  
\item[(2)] Given an LNVL $(X,p,E)$. A subset $A$ of $X$ is said to be {\em $p$-almost order bounded} if, for any $w\in E_+$, there is $x_w\in X$ such that 
$$
  p\big((\lvert x\rvert-x_w)^+\big)=p(\lvert x\rvert-x_w\wedge\lvert x\rvert)\leq w  \ \ \ \ (\forall x\in A),
$$ 
see \cite[Def.7]{AEEM}.
\noindent
If $(X,\lVert\cdot\rVert)$ is a normed lattice then a subset $A$ of $X$ is $p$-almost order bounded in $(X,\lVert\cdot\rVert,\mathbb{R})$ iff $A$ is almost order bounded in $X$. 
On the other hand, if $X$ is a vector lattice, a subset in $(X,\lvert\cdot\rvert,X)$ is $p$-almost order bounded iff it is order bounded in $X$. 
\item[(3)] An operator $T\in L(X,Y)$, where $X$ is a normed space and $Y$ is a normed lattice, is called \textit{semicompact} if $T(B_X)$ is almost order bounded in $Y$.
\end{enumerate}

\begin{definition}
Let $(X,E)$ be an LNS and $(Y,F)$ be an LNVL. A linear operator $T:X\to Y$ is called {\em $p$-semicompact} if, for any $p$-bounded set $A$ in $X$, we have that $T(A)$ is $p$-almost order bounded in $Y$.
\end{definition}

\begin{rem}\ \
\begin{enumerate}
\item[(i)] Any $p$-semicompact operator is $p$-bounded operator.
\item[(ii)] Let $T,S\in L(X)$, where $X$ is an LNS. If $T$ is $p$-semicompact and $S$ is $p$-compact then it follows easily from Proposition \ref{leftandrightmultiplication} {\em(ii)}, that $S\circ T$ is $p$-compact.
\item[(iii)] Given $T\in L(X,Y)$; where $X$ is a normed space and $Y$ is a normed lattice. 
Then $T$ is semicompact iff  $T:(X,\lVert\cdot\rVert_X,\mathbb{R})\to(Y,\lVert\cdot\rVert_Y,\mathbb{R})$ is $p$-semicompact.
\item[(iv)] For vector lattices $X$ and $Y$, we have $T\in L^\sim (X,Y)$ iff  $T:(X,\lvert\cdot\rvert,X)$ $\to(Y,\lvert\cdot\rvert,Y)$ is $p$-semicompact.
\end{enumerate}
\end{rem}

\begin{prop}
Let $(X,p,E)$ be an LNS with an $AM$-space $(E,\lVert\cdot\rVert_E)$ possessing a strong unit and $(Y,m,F)$ be an LNVL with a normed lattice $(F,\lVert\cdot\rVert_F)$. 
If $T:(X,p,E)\to(Y,m,F)$ is $p$-semicompact then $T:(X,p\text{-}\lVert\cdot\rVert_E)\to(Y,m\text{-}\lVert\cdot\rVert_F)$ is semicompact.
\end{prop}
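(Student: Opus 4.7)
The plan is to reduce almost order boundedness of $T(B_X)$ in the mixed-norm space to $p$-almost order boundedness of $T$ on $p$-bounded subsets of $X$, where $B_X$ denotes the closed unit ball of the normed space $(X,p\text{-}\lVert\cdot\rVert_E)$. The whole argument rests on the classical fact that in an $AM$-space with a strong unit, norm-bounded sets are order-bounded.

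First I would show that $B_X$ is $p$-bounded in $(X,p,E)$. Since $(E,\lVert\cdot\rVert_E)$ is an $AM$-space with a strong unit $e$, the norm $\lVert\cdot\rVert_E$ is equivalent to the $e$-uniform norm $\lVert u\rVert_e=\inf\{\lambda>0:|u|\leq\lambda e\}$, so there exists a constant $M>0$ such that $\lVert u\rVert_E\leq 1$ implies $|u|\leq Me$. Consequently, for any $x\in B_X$ one has $\lVert p(x)\rVert_E\leq 1$, whence $p(x)\leq Me$, and $B_X$ is $p$-bounded by $Me\in E_+$.

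Next, the $p$-semicompactness of $T$ gives that $T(B_X)$ is $p$-almost order bounded in $(Y,m,F)$; that is, for each $w\in F_+$ there exists $y_w\in Y_+$ such that $m\big((\lvert Tx\rvert-y_w)^+\big)\leq w$ for every $x\in B_X$. Now fix $\varepsilon>0$ and choose $w\in F_+$ with $\lVert w\rVert_F\leq\varepsilon$ (if $F=\{0\}$ the conclusion is vacuous; otherwise scale any nonzero element of $F_+$). Using monotonicity of the norm on the normed lattice $F$, together with the definition of the mixed norm, I obtain
$$m\text{-}\lVert(\lvert Tx\rvert-y_w)^+\rVert_F=\big\lVert m\big((\lvert Tx\rvert-y_w)^+\big)\big\rVert_F\leq\lVert w\rVert_F\leq\varepsilon\qquad(\forall x\in B_X),$$
which is exactly the almost order boundedness of $T(B_X)$ in the normed lattice $(Y,m\text{-}\lVert\cdot\rVert_F)$. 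Hence $T$ is semicompact.

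The only nontrivial step is the first one: the $AM$-space-with-strong-unit hypothesis on $E$ is essential to convert mixed-norm boundedness into $p$-boundedness. Without it, there is no reason for $B_X$ to be $p$-bounded, and one could not feed anything into the $p$-semicompactness of $T$; the remainder of the argument is a straightforward transcription of the $p$-language into the mixed-norm language.
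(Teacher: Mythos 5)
Your proof is correct and follows essentially the same route as the paper's: show that $B_X$ is $p$-bounded using the strong-unit property of the $AM$-space $E$, invoke $p$-semicompactness to get $p$-almost order boundedness of $T(B_X)$, and then transfer the estimate through the mixed norm by choosing $w\in F_+$ with $\lVert w\rVert_F\leq\varepsilon$. The only differences are cosmetic (your explicit constant $M$ and the remark about $F=\{0\}$).
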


\begin{proof}
Consider the closed unit ball $B_X$ of $(X,p\text{-}\lVert\cdot\rVert_E)$. Then $p\text{-}\lVert x\rVert_E\leq 1$ or  $\lVert p(x)\rVert_E\leq 1$ for all $x\in B_X$. 
We show that $T(B_X)$ is almost order bounded in $(Y,m\text{-}\lVert\cdot\rVert_F)$. Given $\varepsilon >0$. Let $w\in F_+$ such that 
\begin{equation}\label{epsilon}
\lVert w\rVert_F=\varepsilon.
\end{equation}
Since $\lVert p(x)\rVert_E\leq 1$ for all $x\in B_X$ and $(E,\lVert\cdot\rVert_E)$ is an $AM$-space with a strong unit, there exists $e\in E_+$ such that $p(x)\leq e$ for all $x\in B_X$. 
Thus, $B_X$ is $p$-bounded in $(X,p,E)$ and, since $T$ is $p$-semicompact, we get that $T(B_X)$ is $p$-almost order bounded in $(Y,m,F)$. So, for $w\in F_+$ in (\ref{epsilon}), 
there is $y_w\in Y_+$ such that $m\big((\lvert Tx\rvert -y_w)^+\big)\leq w$ for all $x\in B_X$, 
which implies that $\rVert m\big((\lvert Tx\rvert -y_w)^+\big)\rVert_F\leq\lVert w\rVert_F$ for all $x\in B_X$. 
Hence, $m\text{-}\rVert(\lvert Tx\rvert -y_w)^+\rVert_F\leq\varepsilon$ for all $x\in B_X$. Therefore, $T$ is semicompact.
\end{proof}

\begin{prop}
Let $(X,p,E)$ and $(Y,m,F)$ be two LNVLs. Suppose a positive linear operator $T:X\to Y$ to be $p$-semicompact. If $0\leq S\leq T$ then $S$ is $p$-semicompact.
\end{prop}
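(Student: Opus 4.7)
The plan is a direct verification from the definition: given an arbitrary $p$-bounded set $A\subseteq X$, I will exhibit, for each $w\in F_+$, a vector $y_w\in Y$ witnessing that $S(A)$ is $p$-almost order bounded in $Y$. The strategy is to apply the $p$-semicompactness of $T$ not to $A$ itself but to the set of absolute values $A':=\{|a|:a\in A\}$, and then to transfer the resulting bound from $T$ to $S$ using $0\leq S\leq T$ together with monotonicity of $m$.

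First I will observe that $A'$ is $p$-bounded. This uses monotonicity of the lattice norm $p$ in an LNVL, combined with the fact that $\bigl||a|\bigr|=|a|$, which forces $p(|a|)=p(a)$; hence the same $e\in E_+$ that bounds $p$ on $A$ bounds $p$ on $A'$. By $p$-semicompactness of $T$, the image $T(A')$ is $p$-almost order bounded in $Y$: given $w\in F_+$ there exists $y_w\in Y$ with
\[
m\bigl((|T|a||-y_w)^+\bigr)\leq w\qquad(\forall a\in A).
\]
Since $T\geq 0$, $T|a|\geq 0$, so $|T|a||=T|a|$ and the displayed inequality reads $m\bigl((T|a|-y_w)^+\bigr)\leq w$.

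Next I will transfer this estimate to $S$. Because $S\geq 0$, the standard inequality $|Sa|\leq S|a|$ holds; because $S\leq T$ and $|a|\geq 0$, we also have $S|a|\leq T|a|$. Combining, $|Sa|\leq T|a|$, and since $t\mapsto (t-y_w)^+$ is monotone in $t$ in any vector lattice, $(|Sa|-y_w)^+\leq (T|a|-y_w)^+$. Applying monotonicity of $m$ (which is part of the definition of an LNVL) yields
\[
m\bigl((|Sa|-y_w)^+\bigr)\leq m\bigl((T|a|-y_w)^+\bigr)\leq w\qquad(\forall a\in A),
\]
so the \emph{same} $y_w$ witnesses $p$-almost order boundedness of $S(A)$. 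Since $A$ was an arbitrary $p$-bounded set and $w$ was arbitrary, $S$ is $p$-semicompact.

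There is no real obstacle here: the only subtlety is keeping track of absolute values (which is why one replaces $A$ by $A'$ before invoking the hypothesis on $T$) and ensuring that monotonicity of both $p$ and $m$ is used where needed. No completeness or decomposability assumption is invoked, so the argument is valid for arbitrary LNVLs.
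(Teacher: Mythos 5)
Your proof is correct and follows essentially the same route as the paper's: replace $A$ by its set of absolute values, apply the $p$-semicompactness of $T$ there, and transfer the bound to $S$ via $|Sa|\leq S|a|\leq T|a|$, monotonicity of $t\mapsto(t-y_w)^+$, and monotonicity of $m$. The only cosmetic difference is that the paper passes through the intermediate estimate on $(S|a|-y_w)^+$ explicitly, while you collapse the two inequalities into one step.
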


\begin{proof}
Let $A$ be a $p$-bounded set in $X$. Put $\lvert A\rvert:=\{\lvert a\rvert:a\in A\}$. Clearly $\lvert A\rvert$ is $p$-bounded.
Since $T$ is $p$-semicompact then $T(\lvert A\rvert)$ is $p$-almost order bounded. Given $w\in F_+$, there is $y_w\in Y_+$ such that 
$$
  m\big((T\lvert a\rvert-y_w)^+\big)\leq w \ \ \ \ \ \  (a\in A).
$$
Thus, for any $a\in A$,
$$
  S\lvert a\rvert\leq T\lvert a\rvert\Rightarrow(S\lvert a\rvert-y_w)^+\leq (T\lvert a\rvert-y_w)^+\Rightarrow m\big((S\lvert a\rvert-y_w)^+\big)\leq w
$$
Since $(\lvert Sa\rvert - y_w)^+\leq (S\lvert a\rvert -y_w)^+$, we have 
\begin{eqnarray*}
m\big((\lvert Sa\rvert - y_w)^+\big)\leq m\big((S\lvert a\rvert -y_w)^+\big)\leq w \ \ \ \ (\forall a\in A).
\end{eqnarray*}
Therefore, $S(A)$ is $p$-almost order bounded, and $S$ is $p$-semicompact.
\end{proof}

A linear operator $T$ from an LNS $(X,E)$ to a Banach space $(Y,\lVert\cdot\rVert_Y)$ is called \textit{generalized $AM$-compact} or \textit{$GAM$-compact} if, for any $p$-bounded set $A$ in $X$, 
$T(A)$ is relatively compact in $(Y,\lVert\cdot\rVert_Y)$; see \cite[p.1281]{P}. Clearly, $T:(X,p,E)\to(Y,\lVert\cdot\rVert_Y,\mathbb{R})$ is $GAM$-compact iff it is (sequentially) $p$-compact.

\begin{prop}\label{GAM-compact}
Let $(X,p,E)$ be an LNS and $(Y,m,F)$ be an $op$-continuous LNVL with a norming Banach lattice $(Y,\lVert\cdot\rVert_Y)$. If $T:(X,p,E)\to(Y,\lVert\cdot\rVert_Y)$ is $GAM$-compact 
then $T:(X,p,E)\to(Y,m,F)$ is sequentially $p$-compact.
\end{prop}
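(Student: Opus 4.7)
The plan is to take a $p$-bounded sequence $x_n$ in $X$, extract a subsequence on which $Tx_n$ converges in the $\lVert\cdot\rVert_Y$-norm via $GAM$-compactness, then pass to a further subsequence where convergence becomes order convergence (using the Banach lattice structure of $Y$), and finally transfer that order convergence to $m$-convergence in $F$ via $op$-continuity of $(Y,m,F)$.

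More concretely, first I would let $x_n$ be an arbitrary $p$-bounded sequence in $(X,p,E)$. Since $T:(X,p,E)\to(Y,\lVert\cdot\rVert_Y)$ is $GAM$-compact, the set $T(\{x_n:n\in\mathbb{N}\})$ is relatively norm compact in $(Y,\lVert\cdot\rVert_Y)$; hence there exist a subsequence $x_{n_k}$ and an element $y\in Y$ such that $\lVert Tx_{n_k}-y\rVert_Y\to 0$.

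Next I would appeal to the same Banach-lattice fact that was already used in Propositions \ref{seqpcontimpliesnormcont} and \ref{compactimpliesseq.p-compact}: in any Banach lattice, a norm-convergent sequence admits an order-convergent subsequence with the same limit (cf.\ \cite[Thm.VII.2.1]{V}). Applying this to $Tx_{n_k}\to y$ in $(Y,\lVert\cdot\rVert_Y)$ yields a further subsequence $x_{n_{k_j}}$ with $Tx_{n_{k_j}}\oc y$ in $Y$. Finally, because $(Y,m,F)$ is $op$-continuous, the order-null sequence $Tx_{n_{k_j}}-y$ satisfies $m(Tx_{n_{k_j}}-y)\oc 0$ in $F$, i.e.\ $Tx_{n_{k_j}}\pc y$ in $(Y,m,F)$. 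This is exactly what sequential $p$-compactness of $T:(X,p,E)\to(Y,m,F)$ requires.

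There is no real obstacle here; the argument is a direct three-step chaining together of (i) the definition of $GAM$-compactness, (ii) the Banach-lattice subsequence principle, and (iii) $op$-continuity. The only mildly subtle point is making explicit that the hypothesis that $\lVert\cdot\rVert_Y$ makes $Y$ a Banach lattice is what legitimises the passage from norm to order convergence; once that is noted, the rest is routine bookkeeping of subsequences.
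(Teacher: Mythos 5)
Your argument is correct and is essentially identical to the paper's own proof: extract a norm-convergent subsequence via $GAM$-compactness, pass to a further order-convergent subsequence using \cite[Thm.VII.2.1]{V} in the Banach lattice $(Y,\lVert\cdot\rVert_Y)$, and conclude via $op$-continuity of $(Y,m,F)$. No differences worth noting.
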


\begin{proof}
Let $x_n$ be a $p$-bounded sequence in $X$. Since $T$ is $GAM$-compact then there are a subsequence $x_{n_k}$ and some $y\in Y$ such that  $\lVert Tx_{n_k}-y\rVert_Y\to 0$. 
As $(Y,\lVert\cdot\rVert_Y)$ is Banach lattice then, by \cite[Thm.VII.2.1]{V}, there is a subsequence $x_{n_{k_j}}$ such that $Tx_{n_{k_j}}\oc y$ in $Y$. 
Then, by $op$-continuity of  $(Y,m,F)$, we get $Tx_{n_{k_j}}\pc y$ in $Y$. Hence, $T$ is sequentially $p$-compact.
\end{proof}

In particular, if $(X,p,E)$ is an LNS, $(Y,\lVert\cdot\rVert_Y)$ is a Banach lattice and $T:(X,p,E)\to(Y,\lVert\cdot\rVert_Y)$ is $GAM$-compact operator 
then, since $(Y,\lvert\cdot\rvert,Y)$ is always $op$-continuous LNVL, we get that $T:(X,p,E)\to(Y,\lvert\cdot\rvert,Y)$ is sequentially $p$-compact.

It is known that any compact operator is semicompact. So, the following question arises naturally.
\begin{question}\label{Q2}
Is it true that every $p$-compact operator is $p$-semicompact?
\end{question}

It should be noticed that, if Question \ref{Q1} has a negative answer then Question \ref{Q2} has a negative answer as well, since every $p$-semicompact operator is $p$-bounded, and if Question \ref{Q1} has a positive answer then every $p$-compact operator $T:(X,\lvert\cdot\rvert,X)\to(Y,\lvert\cdot\rvert,Y)$ is $p$-semicompact, where $X$ and $Y$ are vector lattices.

The converse of Question \ref{Q2} is known to be false. For instance, the identity operator $I$ on $(\ell_\infty,\lVert \cdot \rVert_\infty)$ is semicompact which is not compact.

\section{$p$-$M$-Weakly and $p$-$L$-Weakly Compact Operators}

Recall that an operator $T\in B(X,Y)$ from a normed lattice $X$ into a normed space $Y$ is called \textit{M-weakly compact}, whenever $\lim\|Tx_n\|=0$ 
holds for every norm bounded disjoint sequence $x_n$ in $X$, and $T\in B(X,Y)$ from a normed space $X$ into a normed lattice $Y$ is called \textit{L-weakly compact}, 
whenever $\lim\|y_n\|=0$ holds for every disjoint sequence $y_n$ in $sol(T(B_X))$ (see for example, \cite[Def.3.6.9]{M}). Similarly we have:

\begin{definition} 
Let $T:(X,p,E)\to (Y,m,F)$ be a $p$-bounded and sequentially $p$-continuous operator between LNSs.
\begin{enumerate} 
\item[(1)] If $X$ is an LNVL and $m(Tx_n)\oc 0$ for every $p$-bounded disjoint sequence $x_n$ in $X$ then $T$ is said to be {\em $p$-$M$-weakly compact}.

\item[(2)] If $Y$ is an LNVL and $m(y_n)\oc 0$ for every disjoint sequence $y_n$ in $sol(T(A))$, where $A$ is a $p$-bounded subset of $X$, then $T$ is said to be {\em $p$-$L$-weakly compact}.
\end{enumerate}
\end{definition}

\begin{rem} \
\begin{enumerate}
\item[(1)] Let $(X,\lVert\cdot\rVert_X)$ be a normed lattice and $(Y,\lVert\cdot\rVert_Y)$ be a normed space. 
Assume $T\in B(X,Y)$ then $T:(X,\lVert\cdot\rVert_X,\mathbb{R})\to(Y,\lVert\cdot\rVert_Y,\mathbb{R})$ is $p$-$M$-weakly compact iff  $T:X\to Y$ is $M$-weakly compact.
\item[(2)] Let $(X,\lVert\cdot\rVert_X)$ be a normed space and $(Y,\lVert\cdot\rVert_Y)$ be a normed lattice. Assume $T\in B(X,Y)$ 
then $T:(X,\lVert\cdot\rVert_X,\mathbb{R})\to(Y,\lVert\cdot\rVert_Y,\mathbb{R})$ is $p$-$L$-weakly compact iff  $T:X\to Y$ is $L$-weakly compact.
\end{enumerate} 
\end{rem}

\noindent

In the sequel, the following fact will be used frequently.

\begin{rem}\label{disjointsequence}
If $x_n$ is a disjoint sequence in a vector lattice $X$ then $x_n\uoc 0$ $($see \cite[Cor.3.6]{GTX}$)$. If, in addition, $x_n$ is order bounded in $X$ then clearly $x_n\oc 0$.
\end{rem}

It is shown below that, in some cases, the collection of $p$-$M$ and $p$-$L$-weakly compact operators can be very large.

\begin{prop}\label{p-M-weakly-compact}
Assume $X$ to be a vector lattice and $(Y,\lVert\cdot\rVert_Y)$ a normed space. If $T:(X,\lvert\cdot\rvert,X)\to(Y,\lVert\cdot\rVert_Y,\mathbb{R})$ is $p$-bounded and sequentially $p$-continuous 
then $T$ is $p$-$M$-weakly compact.
\end{prop}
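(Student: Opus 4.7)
The plan is to reduce the $p$-$M$-weakly compactness condition to an application of sequential $p$-continuity after invoking Remark \ref{disjointsequence} to upgrade ``disjoint + order bounded'' to an order-null sequence.

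First, I would unpack what all the ingredients mean in this specific LNS setup. A $p$-bounded set in $(X,\lvert\cdot\rvert,X)$ is exactly an order bounded subset of $X$; thus a $p$-bounded disjoint sequence $x_n \subset X$ is simply an order bounded disjoint sequence. Similarly, the lattice norm $m$ on $(Y,\lVert\cdot\rVert_Y,\mathbb{R})$ is the scalar $m(y)=\lVert y\rVert_Y$, so the required conclusion $m(Tx_n)\oc 0$ in $\mathbb{R}$ is just $\lVert Tx_n\rVert_Y\to 0$.

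Next, given such an order bounded disjoint sequence $x_n$ in $X$, Remark \ref{disjointsequence} yields $x_n\uoc 0$, and since $x_n$ is order bounded this upgrades to $x_n\oc 0$ in $X$. Rewriting in the LNS language, this means $\lvert x_n - 0\rvert = \lvert x_n\rvert \oc 0$, i.e. $x_n\pc 0$ in $(X,\lvert\cdot\rvert,X)$. Now the sequential $p$-continuity hypothesis applies directly to give $Tx_n\pc 0$ in $(Y,\lVert\cdot\rVert_Y,\mathbb{R})$, which unwinds to $\lVert Tx_n\rVert_Y\oc 0$ in $\mathbb{R}$, hence $\lVert Tx_n\rVert_Y\to 0$. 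Together with the standing assumption that $T$ is $p$-bounded and sequentially $p$-continuous, this is exactly the definition of $p$-$M$-weakly compact.

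There is no real obstacle here; the whole argument is a one-line reduction once one recognizes that order boundedness plus disjointness forces order convergence to zero (via Remark \ref{disjointsequence}), after which the definition of sequential $p$-continuity does all the work. The only thing to be careful about is not to forget that the hypotheses of the proposition already supply the two standing assumptions (p-boundedness and sequential p-continuity) that the definition of $p$-$M$-weakly compact demands on top of the disjoint-sequence condition.
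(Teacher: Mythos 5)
Your proposal is correct and follows essentially the same route as the paper: identify a $p$-bounded disjoint sequence in $(X,\lvert\cdot\rvert,X)$ as an order bounded disjoint sequence, apply Remark \ref{disjointsequence} to get $x_n\oc 0$ (hence $x_n\pc 0$), and then invoke sequential $p$-continuity to conclude $\lVert Tx_n\rVert_Y\to 0$. No differences worth noting.
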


\begin{proof}
Let $x_n$ be a $p$-bounded disjoint sequence in $(X,\lvert\cdot\rvert,X)$. Then $x_n$ is order bounded in $X$ and, by Remark \ref{disjointsequence}, we get $x_n\oc 0$. 
That is, $x_n\pc 0$ in $(X,\lvert\cdot\rvert,X)$. Since $T$ is sequentially $p$-continuous then $Tx_n\xrightarrow{\lVert\cdot\rVert_Y}0$. 
Therefore, $T:(X,\lvert\cdot\rvert,X)\to(Y,\lVert\cdot\rVert_Y,\mathbb{R})$ is $p$-$M$-weakly compact.
\end{proof}

\begin{cor}
Let $(X,\lVert\cdot\rVert_X)$ be a normed lattice and $Y$ be a vector lattice. Let $Y_c^\sim$ denote the $\sigma$-order continuous dual of $Y$. 
If $0\leq T:(X,\lVert\cdot\rVert_X,\mathbb{R})\to(Y,\lvert\cdot\rvert,Y)$ is sequentially $p$-continuous and $p$-bounded 
then the operator $T^{\sim} :(Y_c^\sim,\lvert\cdot\rvert,Y_c^\sim)\to(X^*,\lVert\cdot\rVert_{X^*},\mathbb{R})$ defined by $T^{\sim} (f):=f\circ T$ is $p$-$M$-weakly compact.
\end{cor}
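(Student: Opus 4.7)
The plan is to deduce this corollary directly by combining two earlier results: Theorem \ref{adjoint}, which under the present hypotheses produces $p$-continuity of $T^\sim$, and Proposition \ref{p-M-weakly-compact}, which turns a sequentially $p$-continuous $p$-bounded operator with domain of the form $(Z,\lvert\cdot\rvert,Z)$ and codomain of the form $(W,\lVert\cdot\rVert_W,\mathbb{R})$ into a $p$-$M$-weakly compact operator.

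First, I would observe that the source LNS $(Y_c^\sim,\lvert\cdot\rvert,Y_c^\sim)$ is of the required form in Proposition \ref{p-M-weakly-compact}, since $Y_c^\sim$ is a vector lattice (it is a band in $Y^\sim$). Likewise, the target $(X^*,\lVert\cdot\rVert_{X^*},\mathbb{R})$ is already of the desired form, namely a normed space viewed as an LNS over $\mathbb{R}$. So the only task is to verify that $T^\sim$ is $p$-bounded and sequentially $p$-continuous.

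For sequential $p$-continuity, I would apply Theorem \ref{adjoint} verbatim: the hypotheses that $X$ is a normed lattice, $Y$ is a vector lattice, and $0\leq T:(X,\lVert\cdot\rVert_X,\mathbb{R})\to(Y,\lvert\cdot\rvert,Y)$ is sequentially $p$-continuous and $p$-bounded are exactly those needed to conclude that $T^\sim:(Y_c^\sim,\lvert\cdot\rvert,Y_c^\sim)\to(X^*,\lVert\cdot\rVert_{X^*},\mathbb{R})$ is $p$-continuous; in particular it is sequentially $p$-continuous. For $p$-boundedness, I would invoke Proposition \ref{p-cont.isp-bdd}, which guarantees that every $p$-continuous operator between LNSs is automatically $p$-bounded.

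Having established both properties of $T^\sim$, the conclusion is immediate from Proposition \ref{p-M-weakly-compact} applied to $T^\sim$ in place of $T$. The argument is essentially a bookkeeping step, so there is no substantive obstacle; the only thing to double-check is that the LNS structures genuinely match the templates of the cited results (in particular that the domain carries the absolute-value vector norm and the codomain the scalar-norm structure), which they do by construction.
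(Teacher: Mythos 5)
Your proposal is correct and follows exactly the paper's own argument: Theorem \ref{adjoint} gives $p$-continuity of $T^\sim$, Proposition \ref{p-cont.isp-bdd} upgrades this to $p$-boundedness, and Proposition \ref{p-M-weakly-compact} then yields $p$-$M$-weak compactness. The extra verification that the LNS structures match the templates of the cited results is sound bookkeeping but adds nothing beyond what the paper does.
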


\begin{proof}
Theorem \ref{adjoint} implies that $T^\sim$ is $p$-continuous, and so it is $p$-bounded by Proposition \ref{p-cont.isp-bdd}. Thus, we get from Proposition \ref{p-M-weakly-compact}, that $T^\sim$ is $p$-$M$-weakly compact.
\end{proof}

\begin{prop}\label{p-L-weakly-compact}
Assume $(X,\lVert\cdot\rVert_X)$ to be a normed lattice and $Y$ a vector lattice. If $T:(X,\lVert\cdot\rVert_X,\mathbb{R})\to(Y,\lvert\cdot\rvert,Y)$ is $p$-bounded and sequentially $p$-continuous operator 
then $T$ is $p$-$L$-weakly compact.
\end{prop}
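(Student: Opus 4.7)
The plan is to unwind all the definitions and reduce the claim to Remark \ref{disjointsequence}. The hypotheses of $p$-boundedness and sequential $p$-continuity are needed essentially to make the notion of $p$-$L$-weakly compactness applicable to $T$, while the real content will come from the fact that order bounded disjoint sequences are order null.

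First I would observe that since the lattice norm on the domain is the scalar norm $\lVert\cdot\rVert_X$ with values in $\mathbb{R}$, a $p$-bounded subset $A$ of $(X,\lVert\cdot\rVert_X,\mathbb{R})$ is just a norm bounded subset of $X$. Next, since $T:(X,\lVert\cdot\rVert_X,\mathbb{R})\to (Y,\lvert\cdot\rvert,Y)$ is $p$-bounded, the image $T(A)$ is $p$-bounded in $(Y,\lvert\cdot\rvert,Y)$, i.e.\ order bounded in $Y$. Thus there exists $y_0\in Y_+$ such that $\lvert Ta\rvert\leq y_0$ for every $a\in A$.

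Now let $y_n$ be any disjoint sequence in $sol(T(A))$. By definition of the solid hull, for each $n$ there is $a_n\in A$ with $\lvert y_n\rvert\leq\lvert Ta_n\rvert\leq y_0$, so $\lvert y_n\rvert$ is an order bounded sequence. Since $y_n$ is disjoint, so is $\lvert y_n\rvert$ (as $\lvert y_n\rvert\wedge\lvert y_m\rvert=0$ whenever $n\ne m$). Applying Remark \ref{disjointsequence} to the disjoint order bounded sequence $\lvert y_n\rvert$ yields $\lvert y_n\rvert\oc 0$ in $Y$. Since the codomain lattice norm is $m=\lvert\cdot\rvert$, this says exactly that $m(y_n)\oc 0$, which is the defining condition of $p$-$L$-weakly compactness.

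There is no real obstacle: the only subtle points are recognizing that $p$-boundedness of $A$ here reduces to norm boundedness (thanks to the choice $E=\mathbb{R}$), and that $p$-boundedness of $T$ together with the codomain structure $(Y,\lvert\cdot\rvert,Y)$ converts images of bounded sets into order bounded sets, so that the solid hull is order bounded and Remark \ref{disjointsequence} applies directly.
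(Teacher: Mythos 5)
Your argument is correct and is essentially identical to the paper's own proof: both reduce $p$-boundedness of $A$ to norm boundedness, use $p$-boundedness of $T$ to get $T(A)$ (hence $sol(T(A))$) order bounded in $Y$, and then apply Remark \ref{disjointsequence} to the order bounded disjoint sequence $y_n$ to conclude $m(y_n)=\lvert y_n\rvert\oc 0$. No issues.
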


\begin{proof}
Let $A$ be a $p$-bounded set in  $(X,\lVert\cdot\rVert_X,\mathbb{R})$. Since $T$ is a $p$-bounded operator then $T(A)$ is $p$-bounded in $(Y,\lvert\cdot\rvert,Y)$, i.e. 
$T(A)$ is order bounded and hence $sol(T(A))$ is order bounded. Let $y_n$ be a disjoint sequence in $sol(T(A))$.  
Then, by Remark \ref{disjointsequence}, we have $y_n\oc 0$ in $Y$, i.e. $y_n\pc 0$ in $(Y,\lvert\cdot\rvert,Y)$. Thus, $T$ is $p$-$L$-weakly compact.
\end{proof}

\begin{cor}
Let $X$ be a vector lattice and $Y$ be an $AL$-space. Assume $0\leq T:(X,\lvert\cdot\rvert,X)\to(Y,\lVert\cdot\rVert_Y,\mathbb{R})$ to be sequentially $p$-continuous. 
Define $T^{\sim}:(Y^*,\lVert\cdot\rVert_{Y^*},\mathbb{R})\to(X^\sim,\lvert\cdot\rvert,X^\sim)$ by $T^{\sim}(f)=f\circ T$. Then $T^{\sim}$ is $p$-$L$-weakly compact.
\end{cor}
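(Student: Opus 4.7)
The plan is to combine the two immediately preceding results. The hypothesis of the corollary is exactly the setting of Theorem \ref{AL-adjoint}, which yields that $T^{\sim}:(Y^*,\lVert\cdot\rVert_{Y^*},\mathbb{R})\to(X^\sim,\lvert\cdot\rvert,X^\sim)$ is both sequentially $p$-continuous and $p$-bounded. So the first step is simply to invoke that theorem.

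Once this is in hand, the second step is to apply Proposition \ref{p-L-weakly-compact} with the roles of the normed lattice and the vector lattice played by $Y^*$ and $X^\sim$, respectively. Here one needs to observe that since $Y$ is an $AL$-space, $Y^*$ is an $AM$-space with a strong unit and, in particular, a normed lattice, so the hypothesis of Proposition \ref{p-L-weakly-compact} on the domain is met. The target $X^\sim$ is a vector lattice (being the order dual of $X$), so the hypothesis on the codomain is also met. The conclusion of Proposition \ref{p-L-weakly-compact} is precisely that $T^{\sim}$ is $p$-$L$-weakly compact.

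There is no real obstacle here; the entire argument is essentially a one-line composition of Theorem \ref{AL-adjoint} and Proposition \ref{p-L-weakly-compact}. The only minor thing to check is the observation about $Y^*$ being a normed lattice (standard for duals of $AL$-spaces), and that the definitions match: $T^{\sim}$ is indeed a linear operator from an LNS whose codomain is an LNVL of the form $(Y,\lvert\cdot\rvert,Y)$, which is what Proposition \ref{p-L-weakly-compact} requires.
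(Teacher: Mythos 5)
Your proposal is correct and is essentially identical to the paper's own proof: the paper likewise invokes Theorem \ref{AL-adjoint} to get that $T^\sim$ is sequentially $p$-continuous and $p$-bounded, and then applies Proposition \ref{p-L-weakly-compact} to conclude. The extra remarks you make (that $Y^*$ is a normed lattice and $X^\sim$ a vector lattice, so the hypotheses of Proposition \ref{p-L-weakly-compact} are met) are accurate and only make the argument more explicit.
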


\begin{proof}
Theorem \ref{AL-adjoint} implies that $T^\sim$ is sequentially $p$-continuous and $p$-bounded, and so we get, by Proposition \ref{p-L-weakly-compact}, that $T^\sim$ is $p$-$L$-weakly compact.
\end{proof}

It is known that any order continuous operator is order bounded, but this fails for $\sigma$-order continuous operators; see \cite[Exer.10,p.289]{AB}. 
Therefore, we need the order boundedness condition in the following proposition.

\begin{prop}
If $T:X\to Y$ is an order bounded $\sigma$-order continuous operator between vector lattices then $T:(X,\lvert\cdot\rvert,X)\to(Y,\lvert\cdot\rvert,Y)$ is both $p$-$M$-weakly and $p$-$L$-weakly compact.
\end{prop}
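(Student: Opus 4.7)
The plan is to verify the two definitional requirements for $p$-$M$-weakly and $p$-$L$-weakly compactness (namely $p$-boundedness and sequential $p$-continuity of $T$), and then to handle the disjoint-sequence conditions by reducing them to Remark \ref{disjointsequence}. In the lattice-norm setup $(X,\lvert\cdot\rvert,X)\to(Y,\lvert\cdot\rvert,Y)$, $p$-bounded sets are precisely order bounded sets, and sequential $p$-continuity is precisely $\sigma$-order continuity; hence the hypotheses of the proposition translate directly into the two prerequisites.

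First, I would observe that since $T$ is order bounded, it maps order bounded subsets of $X$ to order bounded subsets of $Y$, giving $p$-boundedness. Likewise, since $T$ is $\sigma$-order continuous as an operator between vector lattices, Example (i) of Section 2 gives that $T:(X,\lvert\cdot\rvert,X)\to(Y,\lvert\cdot\rvert,Y)$ is sequentially $p$-continuous.

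Next, to show $p$-$M$-weak compactness, I would take a $p$-bounded disjoint sequence $x_n$ in $(X,\lvert\cdot\rvert,X)$, i.e., an order bounded disjoint sequence in $X$. By Remark \ref{disjointsequence}, $x_n\oc 0$ in $X$, so $x_n\pc 0$ in $(X,\lvert\cdot\rvert,X)$. Sequential $p$-continuity of $T$ then yields $\lvert Tx_n\rvert\oc 0$, which is exactly $m(Tx_n)\oc 0$.

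Finally, for $p$-$L$-weak compactness, I would take a $p$-bounded subset $A$ of $X$, so $A$ is order bounded, and use the order boundedness of $T$ to conclude that $T(A)$, and hence $\mathrm{sol}(T(A))$, is order bounded in $Y$. Any disjoint sequence $y_n$ in $\mathrm{sol}(T(A))$ is therefore an order bounded disjoint sequence, and a second application of Remark \ref{disjointsequence} gives $y_n\oc 0$, i.e., $m(y_n)\oc 0$. No step appears delicate here; the only thing to be careful about is to invoke the order boundedness of $T$ at the right place so that $\mathrm{sol}(T(A))$ is genuinely order bounded before applying the disjoint-sequence remark.
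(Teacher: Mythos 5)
Your proposal is correct and follows essentially the same route as the paper: verify $p$-boundedness and sequential $p$-continuity from order boundedness and $\sigma$-order continuity, then reduce both disjoint-sequence conditions to the fact that an order bounded disjoint sequence is order null (Remark \ref{disjointsequence}). The only cosmetic difference is that for the $M$-weak part the paper applies $\sigma$-order continuity of $T$ directly to $x_n\oc 0$, while you phrase the same step through sequential $p$-continuity; in this lattice-norm setting these are identical.
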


\begin{proof}
Clearly, $T:(X,\lvert\cdot\rvert,X)\to(Y,\lvert\cdot\rvert,Y)$ is both sequentially $p$-continuous and $p$-bounded. 
	
First, we show that $T$ is $p$-$M$-weakly compact. Let $x_n$ be a $p$-bounded disjoint sequence of $X$. Then, by Remark \ref{disjointsequence}, we get $x_n\oc 0$ in $X$ and so $Tx_n\oc 0$ in $Y$. 
Therefore, $T$ is $p$-$M$-weakly compact.
	
Next, we show that $T$ is $p$-$L$-weakly compact. Let $A$ be a $p$-bounded set in $(X,\lvert\cdot\rvert,X)$ then $A$ is order bounded in $X$. 
Thus, $T(A)$ is order bounded and so $sol(T(A))$ is order bounded in $Y$. If $y_n$ is a disjoint sequence in $sol(T(A))$  
then again, by Remark \ref{disjointsequence}, $y_n\oc 0$ or $y_n\pc 0$ in $(Y,\lvert\cdot\rvert, Y)$. Therefore, $T$ is $p$-$L$-weakly compact.
\end{proof}

Next, we show that $p$-$M$-weakly and $p$-$L$-weakly compact operators satisfy the domination property.

\begin{prop}
Let $(X,p,E)$ and $(Y,m,F)$ be LNVLs and let $S,T:X\to Y$ be two linear operators such that $0\leq S\leq T$. 
\begin{enumerate}
\item[(i)] If $T$ is $p$-$M$-weakly compact then $S$ is $p$-$M$-weakly compact.
\item[(ii)] If $T$ is $p$-$L$-weakly compact then $S$ is $p$-$L$-weakly compact.
\end{enumerate}
\end{prop}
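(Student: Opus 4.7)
\medskip

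\noindent\emph{Proof proposal.} The core idea in both parts is the standard sandwich $|Sx|\leq S|x|\leq T|x|$, which holds because $S\geq 0$ and $0\leq S\leq T$, combined with the monotonicity of $m$ on the LNVL $(Y,m,F)$. Before attacking (i) and (ii) I would check that $S$ inherits the two ``background'' hypotheses baked into the definitions of $p$-$M$- and $p$-$L$-weak compactness, namely that $S$ is $p$-bounded and sequentially $p$-continuous. For $p$-boundedness, if $A\subseteq X$ is $p$-bounded then so is $|A|:=\{|a|:a\in A\}$ (monotonicity of $p$), and then $m(Sa)=m(|Sa|)\leq m(S|a|)\leq m(T|a|)$ is dominated uniformly in $a\in A$ by the image under $T$ of the $p$-bounded set $|A|$. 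For sequential $p$-continuity of $S$, if $x_n\pc 0$ in $X$ then $|x_n|\pc 0$ as well; applying sequential $p$-continuity of $T$ yields $m(T|x_n|)\oc 0$, and the same sandwich gives $m(Sx_n)\leq m(T|x_n|)\oc 0$.

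For part (i), let $x_n$ be a $p$-bounded disjoint sequence in $X$. Then $|x_n|$ is again a $p$-bounded disjoint (in fact positive disjoint) sequence in $X$. By $p$-$M$-weak compactness of $T$ applied to $|x_n|$ we get $m(T|x_n|)\oc 0$ in $F$. The sandwich $|Sx_n|\leq S|x_n|\leq T|x_n|$ and the monotonicity of $m$ yield $0\leq m(Sx_n)\leq m(T|x_n|)\oc 0$, hence $m(Sx_n)\oc 0$, which shows $S$ is $p$-$M$-weakly compact.

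For part (ii), let $A$ be a $p$-bounded subset of $X$ and let $y_n$ be a disjoint sequence in $\mathrm{sol}(S(A))$. Pick $a_n\in A$ with $|y_n|\leq |Sa_n|$. Using $|Sa_n|\leq S|a_n|\leq T|a_n|$, we obtain $|y_n|\leq T|a_n|$, so $y_n\in\mathrm{sol}(T(|A|))$, and $|A|$ is $p$-bounded (again by monotonicity of $p$). Since $y_n$ is a disjoint sequence in $\mathrm{sol}(T(|A|))$, $p$-$L$-weak compactness of $T$ forces $m(y_n)\oc 0$, which is exactly what is needed to conclude that $S$ is $p$-$L$-weakly compact.

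There is really no serious obstacle; the only thing that takes care is bookkeeping the modulus: one must pass from the given $x_n$ (respectively $a_n$) to $|x_n|$ (respectively $|a_n|$) in order to invoke the positivity chain $|S\,\cdot\,|\leq S|\,\cdot\,|\leq T|\,\cdot\,|$, and simultaneously observe that disjointness and $p$-boundedness are preserved under taking moduli. Once those preservations are noted, both statements reduce to a single dominated-convergence-type observation applied inside $(F,\leq)$.
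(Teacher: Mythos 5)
Your proof is correct and takes essentially the same route as the paper's: pass to moduli, use the sandwich $|Sx|\leq S|x|\leq T|x|$ together with monotonicity of $m$, and observe that disjointness and $p$-boundedness are preserved under taking moduli (for (ii), this is exactly the paper's chain $sol(S(A))\subseteq sol(S(|A|))\subseteq sol(T(|A|))$ written elementwise). The only difference is that you actually spell out why $S$ inherits $p$-boundedness and sequential $p$-continuity from $T$, which the paper dismisses as ``easily seen.''
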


\begin{proof}
(i) Since $T$ is sequentially $p$-continuous and $p$-bounded then it is easily seen that $S$ is sequentially $p$-continuous and $p$-bounded.  
Let $x_n$ be a $p$-bounded disjoint sequence in $X$. Then $\lvert x_n\rvert$ is also $p$-bounded and disjoint. Since $T$ is $p$-$M$-weakly compact then $m(T\lvert x_n\rvert)\oc 0$ in $F$. 
Now, $0\leq S\lvert x_n\rvert\leq T\lvert x_n\rvert$ for all $n\in \mathbb{N}$ and since the lattice norm is monotone then we get $m(S\lvert x_n\rvert)\oc 0$ in $F$. 
Now, $\lvert Sx_n\rvert\leq S\lvert x_n\rvert$ for all $n\in \mathbb{N}$ and so $m(Sx_n)=m(\lvert Sx_n\rvert)\leq m(S\lvert x_n\rvert)\oc 0$ in $F$.  Thus, $S$ is $p$-$M$-weakly compact.
	
(ii) It is easy to see that  $S$ is sequentially $p$-continuous and $p$-bounded. Let $A$ be a $p$-bounded subset of $X$. 
Put $\lvert A\rvert=\{\lvert a\rvert: a\in A\}$. Clearly, $sol(S(A))\subseteq sol(S(\lvert A\rvert))$ and since $0\leq S\leq T$, we have $sol(S(\lvert A\rvert))\subseteq sol(T(\lvert A\rvert))$. 
Let $y_n$ be a disjoint sequence in $sol(S(A))$ then $y_n$ is in $sol(T(\lvert A\rvert))$ and, since $T$ is $p$-$L$-weakly compact then $m(S\lvert x_n\rvert)\oc 0$ in $F$. Therefore, $S$ is $p$-$L$-weakly compact.
\end{proof}

The following result is a variant of \cite[Thm.4.36]{AB}.

\begin{thm}\label{apprthm}
Let $(X,p,E )$ be a sequentially $p$-complete LNVL such that $(E,\left\|\cdot\right\|_E)$ is a Banach lattice, and let $(Y,m,F)$ be an LNS. 
Assume $T:(X,p,E)\to(Y,m,F)$ to be sequentially $p$-continuous, and let $A$ be a $p$-bounded solid subset of $X$.

If $m(Tx_n)\oc 0$ holds for each disjoint sequence $x_n$ in $A$ then, for each atom $a$ in $F$ and each $\varepsilon >0$, there exists $0\leq u\in I_A$ satisfying 
$$
  f_a\big(m(T(\lvert x\rvert-u)^+)\big)<\varepsilon
$$ 
for all $x\in A$, where $I_A$ denotes the ideal generated by $A$ in $X$.
\end{thm}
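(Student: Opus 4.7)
My proof plan is by contradiction, following the template of the classical Aliprantis--Burkinshaw approximation theorem \cite[Thm.~4.36]{AB}. Suppose the conclusion fails; then there exist an atom $a\in F$ (with associated biorthogonal functional $f_a$) and $\varepsilon>0$ such that for every $0\le u\in I_A$ some $x\in A$ satisfies $f_a\bigl(m(T(|x|-u)^+)\bigr)\ge\varepsilon$. Because $A$ is solid we may replace $x$ by $|x|$ and assume $x\ge 0$ throughout. The nonnegative scalar quantity $q(y):=f_a(m(y))$ on $Y$ is convenient to track: the hypothesis together with order continuity of $f_a$ (it is biorthogonal to an atom) gives $q(Tx_n)\to 0$ for every disjoint sequence $(x_n)$ in $A$, whereas the goal is to exhibit a disjoint sequence on which $q\circ T$ is bounded below.

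The heart of the argument is an inductive construction. Set $u_0=0$; having chosen positive $x_1,\dots,x_n\in A$, put $u_n=x_1+\cdots+x_n\in(I_A)_+$, and apply the failure of the conclusion to $\lambda_{n+1} u_n$ with $\lambda_{n+1}$ a sufficiently large scalar (say $\lambda_{n+1}=4^{n+1}$) to obtain $x_{n+1}\in A_+$ with $q\bigl(T(x_{n+1}-\lambda_{n+1}u_n)^+\bigr)\ge\varepsilon$. Set $y_{n+1}=(x_{n+1}-\lambda_{n+1}u_n)^+$. Since $y_{n+1}\le x_{n+1}$ and $A$ is solid, $y_{n+1}\in A$. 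The large factor $\lambda_{n+1}$ forces $y_{n+1}\wedge x_i$ to be absorbed by a geometrically small quantity for each $i\le n$, so that $(y_n)$ becomes ``almost disjoint'' in a quantitative sense.

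To turn $(y_n)$ into an honest disjoint sequence I would pass to $z_n=\bigl(y_n-\sum_{k<n}y_n\wedge y_k\bigr)^+$ (or an analogous finite truncation), exploiting sequential $p$-completeness of $X$ and the Banach lattice structure of $E$ to make sense of the required sums via Lemma~\ref{mixednormisbanach}. With the $\lambda_n$ growing fast enough, the correction subtracted from $y_n$ is $q$-small, so $q(Tz_n)\ge\varepsilon/2$, while the $z_n$ are pairwise disjoint, positive, and---by solidity of $A$---belong to $A$.

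The final contradiction is immediate: the hypothesis yields $m(Tz_n)\oc 0$ in $F$, and applying the order continuous functional $f_a$ gives $q(Tz_n)=f_a(m(Tz_n))\to 0$, contradicting $q(Tz_n)\ge\varepsilon/2$. The main obstacle is the disjointification step, where the interplay between the choice of the scalars $\lambda_n$, the solidity of $A$, and the availability of infinite sums in $X$ (via sequential $p$-completeness together with $E$ being Banach) must be handled with care; the surrounding steps---reduction to positive elements, order continuity of the biorthogonal functional $f_a$, and the concluding contradiction---are essentially routine.
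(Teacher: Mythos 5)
Your overall strategy --- contradiction, the $4^n$-weighted construction, membership of the truncated elements in $A$ via solidity, sequential $p$-completeness to form infinite sums, and the final application of $f_a$ to the hypothesis --- coincides with the paper's. The genuine gap is the disjointification step, which you correctly identify as the crux but do not resolve, and the specific formula you offer fails: the elements $z_n=\bigl(y_n-\sum_{k<n}y_n\wedge y_k\bigr)^+$ need not be pairwise disjoint. Already in $X=\mathbb{R}$ with $y_1=1$, $y_2=3$ one gets $z_1=1$, $z_2=(3-3\wedge 1)^+=2$, and $z_1\wedge z_2=1\neq 0$. Moreover, even if a disjoint modification were found, you would still need the correction $y_n-z_n$ to be small in the lattice norm $p$ (not merely ``$q$-small''), since the only way to transfer smallness through $T$ is sequential $p$-continuity.

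The paper resolves both points at once with \cite[Lm.4.35]{AB}: after choosing $x_n\in A$ with $f_{a_0}\bigl(m(Tw_n)\bigr)\ge\varepsilon_0$ for $w_n=(\lvert x_{n+1}\rvert-4^n\sum_{i=1}^n\lvert x_i\rvert)^+$, it forms $y=\sum_{n=1}^\infty 2^{-n}\lvert x_n\rvert$ (this is exactly where sequential $p$-completeness and Lemma~\ref{mixednormisbanach} are used) and sets $v_n=(\lvert x_{n+1}\rvert-4^n\sum_{i=1}^n\lvert x_i\rvert-2^{-n}y)^+$. That lemma guarantees the $v_n$ are disjoint; solidity gives $v_n\in A$, so $m(Tv_n)\oc 0$ by hypothesis; and $0\le w_n-v_n\le 2^{-n}y$ gives $p(w_n-v_n)\oc 0$, whence $m(T(w_n-v_n))\oc 0$ by sequential $p$-continuity and therefore $f_{a_0}\bigl(m(Tw_n)\bigr)\to 0$, the desired contradiction. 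To complete your argument you should replace your ad hoc truncation with this (or an equivalent) device; the extra subtracted term $2^{-n}y$ is precisely what makes the sequence genuinely disjoint while keeping the perturbation $p$-controlled.
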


\begin{proof}
Suppose the claim is false. Then there is an atom $a_0\in F$ and $\varepsilon_0 >0$ such that, for each $u\geq 0$ in $I_A$, we have $f_{a_0}\big(m(T(\lvert x\rvert-u)^+)\big)\geq\varepsilon_0$ for some $x\in A$. 
In particular, there exists a sequence $x_n$ in $A$ such that
\begin{equation}\label{negation}
f_{a_0}\big(m(T(\lvert x_{n+1}\rvert-4^n\sum\limits_{i=1}^{n}\lvert x_i\rvert)^+)\big)\geq\varepsilon_0 \ \ \ (\forall n\in \mathbb{N}).
\end{equation}
Now, put $y=\sum\limits_{n=1}^{\infty}2^{-n}\lvert x_n\rvert$. Lemma \ref{mixednormisbanach} implies that $y\in X$.
Also let $w_n=(\lvert x_{n+1}\rvert-4^n\sum\limits_{i=1}^{n}\lvert x_i\rvert)^+$ and $v_n=(\lvert x_{n+1}\rvert-4^n\sum\limits_{i=1}^{n}\lvert x_i\rvert-2^{-n}y)^+$. By \cite[Lm.4.35]{AB}, 
the sequence $v_n$ is disjoint. Also since $A$ is solid and $0\leq v_n<\lvert x_{n+1}\rvert$ holds, we see that $v_n$ in $A$ and so, by the hypothesis, $m(Tx_n)\oc 0$.

On the other hand, $0\leq w_n-v_n\leq 2^{-n}y$ and so $p(w_n-v_n)\leq 2^{-n}p(y)$. Thus, $p(w_n-v_n)\oc 0$ in $F$. Since $T$ is sequentially $p$-continuous then $m\big(T(w_n-v_n)\big)\oc 0$  in $F$. Now, $m(Tw_n)\leq m(T(w_n-v_n))+m(Tv_n)$ implies that $m(Tw_n)\oc 0$ in $F$. In particular, $f_{a_0}\big(m(Tw_n)\big)\to 0$ as $n\to\infty$, which contradicts (\ref{negation}).
\end{proof}

\noindent

In \cite[Thm.5.60]{AB}, the approximation properties were provided for $M$-weakly and $L$-weakly compact operators. 
The following two propositions are similar to \cite[Thm.5.60]{AB} in the case of $p$-$M$-weakly and $p$-$L$-weakly compact operators.

\begin{prop}\label{aprrxpMweakly}
Let $(X,p,E)$ be a sequentially $p$-complete LNVL with a Banach lattice $(E,\left\|\cdot\right\|_E)$, $(Y,m, F)$ be an LNS, $T:(X,p,E)\to(Y,m,F)$ be $p$-$M$-weakly compact, 
and $A$ be a $p$-bounded solid subset of $X$. Then, for each atom $a$ in $F$ and each $\varepsilon >0$, there exists some $u\in X_+$ such that 
$$
  f_a\big(m(T(\lvert x\rvert-u)^+)\big)<\varepsilon
$$ 
holds for all $x\in A$.
\end{prop}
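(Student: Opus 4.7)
The plan is to derive this proposition as a direct corollary of Theorem \ref{apprthm}. All the heavy lifting (the disjoint sequence decomposition $w_n$, $v_n$ in the proof of Theorem \ref{apprthm}, and the use of Lemma \ref{mixednormisbanach} to get $\sum 2^{-n}|x_n|\in X$) is already done there, so I just need to verify that the hypotheses of Theorem \ref{apprthm} are met by $T$ and $A$.

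First I would unpack the assumption that $T:(X,p,E)\to(Y,m,F)$ is $p$-$M$-weakly compact. By definition this means $T$ is $p$-bounded, sequentially $p$-continuous, and satisfies $m(Tx_n)\oc 0$ in $F$ for every $p$-bounded disjoint sequence $x_n$ in $X$. The sequential $p$-continuity is precisely the first structural hypothesis needed in Theorem \ref{apprthm}.

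Next I would check the disjoint-sequence hypothesis of Theorem \ref{apprthm}. Let $x_n$ be any disjoint sequence lying in $A$. Since $A$ is $p$-bounded, the sequence $x_n$ itself is a $p$-bounded disjoint sequence in the LNVL $X$. The $p$-$M$-weak compactness of $T$ then yields $m(Tx_n)\oc 0$ in $F$. This is exactly the condition ``$m(Tx_n)\oc 0$ for each disjoint sequence $x_n$ in $A$'' required by Theorem \ref{apprthm}.

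With both hypotheses verified, and with $(X,p,E)$ sequentially $p$-complete over a Banach lattice $(E,\lVert\cdot\rVert_E)$ and $A$ $p$-bounded and solid as given, Theorem \ref{apprthm} applies to produce, for each atom $a\in F$ and each $\varepsilon>0$, some $0\le u\in I_A\subseteq X$ with
\[
f_a\bigl(m(T(\lvert x\rvert-u)^+)\bigr)<\varepsilon\qquad(\forall x\in A).
\]
Since $u\in I_A\subseteq X_+$, this is the required conclusion, and there is no additional obstacle: the proposition is simply the specialization of Theorem \ref{apprthm} to the hypotheses defining $p$-$M$-weak compactness.
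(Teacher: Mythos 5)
Your proposal is correct and follows essentially the same route as the paper: the paper's own proof is exactly this one-step reduction, observing that $p$-$M$-weak compactness supplies the sequential $p$-continuity and the condition $m(Tx_n)\oc 0$ for disjoint sequences in the $p$-bounded solid set $A$, and then invoking Theorem \ref{apprthm}. Your verification of the hypotheses is accurate, so nothing further is needed.
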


\begin{proof}
Let $A$ be a $p$-bounded solid subset of $X$. Since $T$ is $p$-$M$-weakly compact then $m(Tx_n)\oc 0$ for every disjoint sequence in $A$. 
By Theorem \ref{apprthm}, for any atom $a\in F$ and any $\varepsilon >0$, there exists $u\in X_+$ such that $f_a\big(m(T(\lvert x\rvert-u)^+)\big)<\varepsilon$ for all $x\in A$.
\end{proof}

\begin{prop}
Let $(X,p,E)$ be an LNS and $(Y,m,F)$ be a sequentially $p$-complete LNVL with a Banach lattice $F$. 
Assume $T:(X,p,E)\to(Y,m,F)$ to be $p$-$L$-weakly compact and $A$ to be $p$-bounded in $X$. 
Then, for each atom $a$ in $F$ and each $\varepsilon>0$, there exists some $u\in Y_+$ in the ideal generated by $T(X)$ satisfying  
$$
  f_a\big(m(\lvert Tx\rvert-u)^+)\big)<\varepsilon
$$ 
for all $x\in A$.
\end{prop}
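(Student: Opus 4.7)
The plan is to reduce the statement to Theorem~\ref{apprthm} applied inside $(Y,m,F)$, with the identity operator $\mathrm{id}_Y:(Y,m,F)\to(Y,m,F)$ playing the role of $T$ and the set $A':=sol(T(A))$ playing the role of $A$.

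First I would verify the hypotheses of Theorem~\ref{apprthm} for this choice. By assumption $(Y,m,F)$ is a sequentially $p$-complete LNVL with Banach lattice $F$; the identity $\mathrm{id}_Y$ is trivially sequentially $p$-continuous; and $A'$ is solid by construction. For $p$-boundedness of $A'$, recall that every $p$-$L$-weakly compact operator is by definition $p$-bounded, so $T(A)$ is $p$-bounded in $(Y,m,F)$, and monotonicity of $m$ (coming from the LNVL structure) yields that $sol(T(A))$ is $p$-bounded as well.

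The remaining hypothesis of Theorem~\ref{apprthm} in this setting is: $m(y_n)\oc 0$ for every disjoint sequence $y_n$ contained in $A'=sol(T(A))$. But this is precisely the definition of $p$-$L$-weak compactness of $T$ applied to the $p$-bounded set $A\subseteq X$. Hence Theorem~\ref{apprthm} delivers, for each atom $a\in F$ and each $\varepsilon>0$, an element $0\leq u\in I_{A'}$ with $f_a\bigl(m((|y|-u)^+)\bigr)<\varepsilon$ for all $y\in A'$.

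To conclude I would make two final observations. First, for every $x\in A$ one has $Tx\in T(A)\subseteq A'$, and since $A'$ is solid, $|Tx|\in A'$; specialising $y=|Tx|$ in the inequality above yields precisely $f_a\bigl(m((|Tx|-u)^+)\bigr)<\varepsilon$. Second, $I_{A'}$ is contained in the ideal of $Y$ generated by $T(X)$: every $y\in sol(T(A))$ satisfies $|y|\leq|Tx|$ for some $x\in A\subseteq X$, so lies in the ideal generated by $T(X)$. The only non-mechanical step in all of this is matching the disjoint-sequence hypothesis of Theorem~\ref{apprthm} against the definition of $p$-$L$-weak compactness; once that is observed the rest is routine bookkeeping.
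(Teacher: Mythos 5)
Your proposal is correct and follows essentially the same route as the paper: the authors likewise apply Theorem~\ref{apprthm} to the identity operator on $(Y,m,F)$ with the solid set $sol(T(A))$, noting that the disjoint-sequence hypothesis is exactly the definition of $p$-$L$-weak compactness and that the ideal generated by $sol(T(A))$ sits inside the ideal generated by $T(X)$. Your extra verification of the $p$-boundedness of $sol(T(A))$ via the $p$-boundedness of $T$ and monotonicity of $m$ is a detail the paper leaves implicit, but the argument is the same.
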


\begin{proof}
Let $A$ be a $p$-bounded subset of $X$. Since $T$ is $p$-$L$-weakly compact, $m(y_n)\oc 0$ for any disjoint sequence $y_n$ in $sol(T(A))$. 
Consider the identity operator $I$ on $(Y,m,F)$. By Theorem \ref{apprthm}, for any atom $a\in F$ and each $\varepsilon >0$, 
there exists  $u\in Y_+$ in the ideal generated by $sol(T(A))$ (and so in the ideal generated by $T(X)$)  such that  
$$
  f_a \big(m(|y|-u)^+)\big)<\varepsilon
$$ 
for all $y\in sol(T(A))$. In particular, 
$$
  f_a\big(m(|Tx|-u)^+)\big)<\varepsilon
$$ 
for all $x\in A$.
\end{proof}

The next two results provide relations between $p$-$M$-weakly and $p$-$L$-weakly compact operators, which are known for $M$-weakly and $L$-weakly compact operators; e.g. \cite[Thm.5.67 and Exer.4(a),p:337]{AB}

\begin{thm}
Let $(X,p,E)$ be a sequentially $p$-complete LNVL with a norming Banach lattice $(E,\lVert\cdot\rVert_E)$, $(Y,m,F)$ be an $op$-continuous LNVL with an atomic norming lattice $F$ and $T\in L^\sim(X,Y)$. 
If $T:(X,p,E)\to(Y,m,F)$ is $p$-$M$-weakly compact then $T$ is $p$-$L$-weakly compact.
\end{thm}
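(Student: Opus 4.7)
The plan is to verify the three defining conditions for $p$-$L$-weak compactness. Since $T$ is $p$-$M$-weakly compact, it is already $p$-bounded and sequentially $p$-continuous, so only the disjoint-sequence condition remains: for a $p$-bounded $A\subseteq X$ and a disjoint sequence $y_n$ in $sol(T(A))$, I must show $m(y_n)\oc 0$ in $F$. From $|y_n|\leq |Tx_n|$ for some $x_n\in A$, together with the monotonicity of $m$ and the $p$-boundedness of $T$, the sequence $m(y_n)$ is order bounded in $F$. Because $F$ is atomic, Lemma \ref{atomipointwiseconvergence} combined with the fact that $uo$- and $o$-convergence coincide on order bounded nets reduces the task to showing $f_a(m(y_n))\to 0$ for every atom $a\in F$, where $f_a$ denotes the corresponding biorthogonal functional.

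Fix such an atom $a$ and an $\varepsilon>0$. The solid hull $sol(A)$ is $p$-bounded (by monotonicity of $p$) and solid, so Proposition \ref{aprrxpMweakly} produces $u\in X_+$ satisfying $f_a(m(T(|x|-u)^+))<\varepsilon$ for every $x\in sol(A)$. For each $n$ pick $x_n\in A$ with $|y_n|\leq|Tx_n|$ and truncate by $u$: set $z_n=(x_n\wedge u)\vee(-u)\in[-u,u]$. A direct calculation using the disjointness of $x_n^{+}$ and $x_n^{-}$ yields $x_n-z_n=(x_n^+-u)^+-(x_n^--u)^+$. Since $T\in L^{\sim}(X,Y)$, the image $T[-u,u]$ is order bounded in $Y$, and being symmetric it is contained in some $[-b,b]$ with $b\in Y_+$. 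In particular $|Tz_n|\leq b$, so $|y_n|\leq|Tz_n|+|T(x_n-z_n)|\leq b+|T(x_n-z_n)|$.

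Using the identity $|y_n|=(|y_n|\wedge b)+(|y_n|-b)^+$, set $b_n:=|y_n|\wedge b$. Then $b_n$ is a disjoint sequence order bounded by $b$, so Remark \ref{disjointsequence} gives $b_n\oc 0$ in $Y$; the $op$-continuity of $(Y,m,F)$ then yields $m(b_n)\oc 0$ in $F$, and because $f_a$ is order continuous on the atomic lattice $F$, $f_a(m(b_n))\to 0$. For the remaining part, $(|y_n|-b)^+\leq|T(x_n-z_n)|\leq|T(x_n^+-u)^+|+|T(x_n^--u)^+|$, so monotonicity and subadditivity of $m$, combined with the approximation applied to $x_n^{\pm}\in sol(A)$, yield $f_a(m((|y_n|-b)^+))<2\varepsilon$. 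Adding the two estimates produces $\limsup_n f_a(m(y_n))\leq 2\varepsilon$; since $\varepsilon>0$ was arbitrary, $f_a(m(y_n))\to 0$, as required.

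The main obstacle is carrying out the truncation argument without appealing to a modulus $|T|$, which is unavailable because $Y$ is not assumed Dedekind complete. This is circumvented by splitting $x_n-z_n$ through the positive and negative parts $x_n^{\pm}$ — each of which lies in $sol(A)$ so that the approximation property applies termwise — and by using only the order boundedness of $T$ to control $Tz_n$ via the order interval $[-b,b]$. A secondary subtlety is the reduction to pointwise convergence at atoms, which relies on the fact that an order bounded sequence in the atomic vector lattice $F$ is $o$-null exactly when it is $uo$-null, i.e., when its coordinate at each atom vanishes.
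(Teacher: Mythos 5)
Your proof is correct, and it follows the same overall strategy as the paper's: reduce, via atomicity and order boundedness of $m(y_n)$, to showing $f_a\big(m(y_n)\big)\to 0$ at each atom $a$; invoke Proposition \ref{aprrxpMweakly} on the solid $p$-bounded set $sol(A)$ to control $T(x_n^{\pm}-u)^+$; split $\lvert y_n\rvert$ into a disjoint order-bounded piece (killed by Remark \ref{disjointsequence} and $op$-continuity) plus a piece estimated by $\varepsilon$ at the atom. The one place you genuinely diverge is the splitting mechanism, and it is an improvement. The paper bounds the ``small'' part by $2\lvert T\rvert u$ and then applies the Riesz decomposition property, which presupposes the existence of the modulus $\lvert T\rvert$; under the stated hypotheses $Y$ is not assumed order complete, so $T\in L^\sim(X,Y)$ does not guarantee that $\lvert T\rvert$ exists. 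You avoid this entirely by truncating $x_n$ to $z_n=(x_n\wedge u)\vee(-u)\in[-u,u]$, using only the order boundedness of $T$ to trap $Tz_n$ in some $[-b,b]$, and then decomposing explicitly as $\lvert y_n\rvert=\lvert y_n\rvert\wedge b+(\lvert y_n\rvert-b)^+$; the identity $x_n-z_n=(x_n^+-u)^+-(x_n^--u)^+$ feeds the remainder directly into the approximation estimate. So your argument proves the theorem exactly as stated, whereas the paper's written proof implicitly needs either order completeness of $Y$ or $T$ regular.
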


\begin{proof}
Let $A$ be a $p$-bounded subset of $X$ and let $y_n$ be a disjoint sequence in $sol(T(A))$. 
Then there is a sequence $x_n$ in $A$ such that $\lvert y_n\rvert\leq\lvert Tx_n\rvert$ for all $n\in \mathbb{N}$. Let $a\in F$ be an atom. 
Given $\varepsilon >0$ then, by Proposition \ref{aprrxpMweakly}, there is $u\in X_+$ such that 
$$
  f_a\big(m(T(\lvert x\rvert-u)^+)\big)<\varepsilon
$$ 
holds for all $x\in sol(A)$. In particular, for all $n\in \mathbb{N}$, we have 
$$
  f_a\big(m(T( x_n^+-u)^+)\big)<\varepsilon \ \ \ \text{and} \ \ \ f_a\big(m(T( x_n^--u)^+)\big)<\varepsilon
$$ 
Thus, for each $n\in \mathbb{N}$,
\begin{eqnarray*}
\lvert y_n\rvert &\leq&\lvert Tx_n\rvert\leq\lvert Tx^+_n\rvert+\lvert Tx^-_n\rvert \\ 
&=& \lvert T(x^+_n-u)^++ T(x^+_n\wedge u )\rvert +\rvert T(x^-_n-u)^++ T(x^-_n\wedge u)\rvert \\ 
&\leq&\lvert T(x^+_n-u)^+\rvert+\lvert T(x^+_n\wedge u)\rvert+\lvert T(x^-_n-u)^+\rvert+\lvert T(x^-_n\wedge u)\rvert \\ 
&\leq&\lvert T(x^+_n-u)^+\rvert+\lvert T(x^-_n-u)^+\rvert+\lvert T\rvert(x^+_n\wedge u)+\lvert T\rvert(x^-_n\wedge u) \\ 
&\leq&\lvert T(x^+_n-u)^+\rvert+\lvert T(x^-_n-u)^+\rvert+2\lvert T\rvert u.
\end{eqnarray*}
By Riesz decomposition property, for all $n\in \mathbb{N}$, there exist $u_n,v_n\geq 0$ such that $y_n=u_n+v_n$ and $0\leq u_n\leq\lvert T(x^+_n-u)^+\rvert +\lvert T(x^-_n-u)^+\rvert$, $0\leq v_n\leq 2\lvert T\rvert u$. 
Since $y_n$ is disjoint sequence and $v_n\leq\lvert y_n\rvert$ for all $n\in \mathbb{N}$ then the sequence $v_n$ is disjoint. 
Moreover, it is order bounded. Hence, $v_n\oc o$. Since $(Y,m,F)$ is $op$-continuous then $m(v_n)\oc 0$. In particular, $f_a\big(m(v_n)\big)\to 0$ as $n\to\infty$. So, for given $\varepsilon>0$, there is $n_0\in \mathbb{N}$ such that $f_a (m(v_n))<\varepsilon$ for all $n\geq n_0$.  Thus, for any $n\geq n_0$, we have
\begin{eqnarray*}
f_a\big(m(y_n)\big)&\leq& f_a\big(m(u_n)\big)+f_a\big(m(v_n)\big) \\& \leq&  f_a\big(m(T(x^+_n-u)^+)\big)+f_a\big(m(T(x^-_n-u)^+)\big)+\varepsilon\leq 3\varepsilon. 
\end{eqnarray*}
Hence, $f_a\big(m(y_n)\big)\to 0$ as $n\to\infty$. Since $T$ is $p$-bounded then $m(y_n)$ is order bounded. The atomicity of $F$ implies $m(y_n)\oc 0$ in $F$. Therefore, $T$  is $p$-$L$-weakly compact.
\end{proof}

\begin{prop}
Let $(X,p,E)$ and $(Y,m,F)$ be LNVLs. If $ T:(X,p,E)\to(Y,m,F)$ is a $p$-$L$-weakly compact lattice homomorphism then $T$ is $p$-$M$-weakly compact.
\end{prop}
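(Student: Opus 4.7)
The plan is to unpack the two definitions and check that the conclusion falls out immediately from the lattice homomorphism hypothesis together with the defining property of $p$-$L$-weak compactness.

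First I would note that $T$ is already $p$-bounded and sequentially $p$-continuous, since these two conditions are built into the definition of $p$-$L$-weakly compact. Thus only the disjoint-sequence condition needs to be verified in order to conclude that $T$ is $p$-$M$-weakly compact.

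Now, let $x_n$ be an arbitrary $p$-bounded disjoint sequence in $X$, and set $A:=\{x_n : n\in\mathbb{N}\}$. Then $A$ is a $p$-bounded subset of $X$. Since $T$ is a lattice homomorphism, it preserves disjointness, so the sequence $(Tx_n)$ is disjoint in $Y$. Moreover $Tx_n\in T(A)\subseteq \mathrm{sol}(T(A))$ for every $n$, so $(Tx_n)$ is a disjoint sequence inside $\mathrm{sol}(T(A))$.

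Applying the $p$-$L$-weak compactness of $T$ to the $p$-bounded set $A$ and to this particular disjoint sequence in $\mathrm{sol}(T(A))$ yields $m(Tx_n)\xrightarrow{o}0$ in $F$. As $x_n$ was an arbitrary $p$-bounded disjoint sequence in $X$, this is exactly what is required for $T$ to be $p$-$M$-weakly compact. There is no genuine obstacle here; the only thing to keep straight is that disjointness of $(x_n)$ transfers to disjointness of $(Tx_n)$ precisely because $T$ is a lattice homomorphism, which is why this implication needs that hypothesis and not merely positivity.
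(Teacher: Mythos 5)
Your proof is correct and follows essentially the same route as the paper: take a $p$-bounded disjoint sequence $x_n$, use the lattice homomorphism property to see that $Tx_n$ is disjoint and lies in $sol(T(\{x_n\}))$, and apply the $p$-$L$-weak compactness condition to that set. Your explicit remark that $p$-boundedness and sequential $p$-continuity are already built into the definition is a worthwhile clarification the paper leaves implicit.
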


\begin{proof}
Let $x_n$ be a $p$-bounded disjoint sequence in $X$. Since $T$ is lattice homomorphism then we have that $Tx_n$ is disjoint in $Y$. 
Clearly $Tx_n\in sol\big(\{Tx_n:n\in \mathbb{N}\}\big)$. Since $T$ is a $p$-$L$-weakly compact lattice homomorphism 
then $m\big(T(x_n)\big)\oc 0$ in $F$. Therefore, $T$ is $p$-$M$-weakly compact.
\end{proof}
We end up this section by an investigation of the relation between $p$-$M$-weakly (respectively, $p$-$L$-weakly) compact operators and $M$-weakly (respectively, $L$-weakly) compact operators acting in mixed-normed spaces. 
\begin{prop}
Given an LNVL $(X,p,E)$ with $(E,\lVert\cdot\rVert_E)$, which is an $AM$-space with a strong unit. Let an LNS $(Y,m,F)$ be such that $(F,\lVert\cdot\rVert_F)$ is a $\sigma$-order continuous normed lattice. 
If $T:(X,p,E)\to(Y,m,F)$ is $p$-$M$-weakly compact then $T:(X,p\text{-}\lVert\cdot\rVert_E)\to(Y,m\text{-}\lVert\cdot\rVert_F)$ is $M$-weakly compact.
\end{prop}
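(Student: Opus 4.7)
The plan is to unwind both definitions and use the two hypotheses on $E$ and $F$ to bridge between order-theoretic and norm-theoretic bounds. The $AM$-space with strong unit hypothesis on $E$ will convert norm boundedness in the mixed norm to $p$-boundedness, and the $\sigma$-order continuity of $F$ will convert order convergence of $m(Tx_n)$ back into norm convergence in the mixed norm.

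First I would take an arbitrary disjoint sequence $(x_n)$ in $X$ which is norm bounded with respect to $p\text{-}\lVert\cdot\rVert_E$, so that there is a constant $k$ with $\lVert p(x_n)\rVert_E \leq k$ for all $n$. Since $(E,\lVert\cdot\rVert_E)$ is an $AM$-space with a strong unit, every norm bounded subset of $E$ is order bounded; hence there exists $e \in E_+$ with $p(x_n) \leq e$ for all $n$. This shows $(x_n)$ is a $p$-bounded disjoint sequence in the LNVL $(X,p,E)$.

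Next, since $T:(X,p,E)\to(Y,m,F)$ is $p$-$M$-weakly compact, applying the definition to this $p$-bounded disjoint sequence gives $m(Tx_n) \oc 0$ in $F$. Because $(F,\lVert\cdot\rVert_F)$ is $\sigma$-order continuous, order null sequences are norm null, so $\lVert m(Tx_n)\rVert_F \to 0$, i.e. $m\text{-}\lVert Tx_n\rVert_F \to 0$. Since $(x_n)$ was an arbitrary norm bounded disjoint sequence in $(X,p\text{-}\lVert\cdot\rVert_E)$, this establishes that $T:(X,p\text{-}\lVert\cdot\rVert_E)\to(Y,m\text{-}\lVert\cdot\rVert_F)$ is $M$-weakly compact.

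There is essentially no hard step here; the proof is a two-line translation between conventions once one notices the two correct facts to invoke. The only point that requires any care is verifying that disjointness of $(x_n)$ in the mixed-normed lattice is just disjointness in the underlying vector lattice $X$, so that the $p$-$M$-weakly compact hypothesis on $T$ indeed applies to the sequence produced in the first step.
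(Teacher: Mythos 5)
Your argument is exactly the paper's: use the strong unit of the $AM$-space $E$ to upgrade $\lVert p(x_n)\rVert_E\leq k$ to $p(x_n)\leq e$, apply the $p$-$M$-weak compactness hypothesis to get $m(Tx_n)\oc 0$, and use $\sigma$-order continuity of $F$ to conclude $m\text{-}\lVert Tx_n\rVert_F\to 0$. The one piece you omit is that the paper's definition of $M$-weakly compact requires $T$ to lie in $B\big((X,p\text{-}\lVert\cdot\rVert_E),(Y,m\text{-}\lVert\cdot\rVert_F)\big)$, not merely to kill disjoint norm bounded sequences; the paper secures this as its first line by invoking Proposition \ref{seqpcontimpliesnormcont} (applicable because a $p$-$M$-weakly compact operator is by definition sequentially $p$-continuous and an $AM$-space with a strong unit is in particular a Banach lattice), so you should add that sentence to make the conclusion match the definition.
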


\begin{proof}
By Proposition \ref{seqpcontimpliesnormcont}, it follows that $T:(X,p\text{-}\lVert\cdot\rVert_E)\to(Y,m\text{-}\lVert\cdot\rVert_F)$ is norm continuous. Let $x_n$ be a norm bounded disjoint sequence in $(X,p\text{-}\lVert\cdot\rVert_E)$. 
Then $p\text{-}\lVert x_n\rVert_E\leq M<\infty$ or $\lVert p(x_n)\rVert_E\leq M<\infty$ for all $n\in \mathbb{N}$. Since $(E,\lVert\cdot\rVert_E)$ is an $AM$-space with a strong unit then there is $e\in E_+$ 
such that $p(x_n)\leq e$  for all $n\in \mathbb{N}$. Thus, $x_n$ is a $p$-bounded disjoint sequence in $(X,p,E)$. Since $T:(X,p,E)\to(Y,m,F)$ is $p$-$M$-weakly compact then $m(Tx_n)\oc 0$ in $F$. 
It follows from the $\sigma$-order continuity of $(F,\lVert\cdot\rVert_F)$, that $\lVert m(Tx_n)\rVert_F\to 0$ or $\lim\limits_{n\to\infty}m\text{-}\lVert Tx_n\rVert_F=0$. 
Therefore, $T:(X,p\text{-}\lVert\cdot\rVert_E)\to(Y,m\text{-}\lVert\cdot\rVert_F)$ is $M$-weakly compact.
\end{proof}

\begin{prop}
Suppose $(X,p,E)$ to be an LNVL with a $\sigma$-order continuous normed lattice $(E,\lVert\cdot\rVert_E)$ and $(Y,m,F)$ to be an LNS with an atomic normed lattice $(F,\lVert\cdot\rVert_F)$. 
Assume further that$:$
\begin{itemize}
\item[(i)] $T:(X,p,E)\to(Y,m,F)$ is $p$-bounded; 
\item[(ii)] $T:(X,p\text{-}\lVert\cdot\rVert_E)\to(Y,m\text{-}\lVert\cdot\rVert_F)$ is $M$-weakly compact. 
\end{itemize}
Then $T:(X,p,E)\to(Y,m,F)$ is $p$-$M$-weakly compact.
\end{prop}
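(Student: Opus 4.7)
The plan is to verify the two components in the definition of a $p$-$M$-weakly compact operator: $p$-boundedness is assumed as hypothesis (i), so what remains is to check sequential $p$-continuity of $T$ and then the disjoint-sequence condition $m(Tx_n)\oc 0$ for every $p$-bounded disjoint sequence $(x_n)$ in $X$.

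Sequential $p$-continuity I would obtain by a direct appeal to Theorem \ref{normcontimpliespcont}. Indeed, an $M$-weakly compact operator between normed spaces is by definition norm bounded, hence $T:(X,p\text{-}\lVert\cdot\rVert_E)\to(Y,m\text{-}\lVert\cdot\rVert_F)$ is norm continuous; together with the $p$-boundedness in hypothesis (i), the $\sigma$-order continuity of $(E,\lVert\cdot\rVert_E)$, and the atomicity of $(F,\lVert\cdot\rVert_F)$, Theorem \ref{normcontimpliespcont} delivers sequential $p$-continuity of $T:(X,p,E)\to(Y,m,F)$.

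For the main condition, fix a $p$-bounded disjoint sequence $(x_n)$ in $X$, so $p(x_n)\leq e$ for some $e\in E_+$. Then
$$p\text{-}\lVert x_n\rVert_E=\lVert p(x_n)\rVert_E\leq\lVert e\rVert_E,$$
so $(x_n)$ is norm bounded and disjoint in $(X,p\text{-}\lVert\cdot\rVert_E)$. The $M$-weak compactness of $T$ in the mixed norms now yields
$$\lVert m(Tx_n)\rVert_F=m\text{-}\lVert Tx_n\rVert_F\longrightarrow 0.$$
Moreover, since $T$ is $p$-bounded, the set $\{m(Tx_n):n\in\mathbb{N}\}$ is order bounded in $F$ by some $\tilde e\in F_+$.

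It remains to upgrade norm convergence of $(m(Tx_n))$ to order convergence, and this is where the main (but not new) obstacle lies. I would run the atomicity argument from the last paragraph of the proof of Theorem \ref{normcontimpliespcont}: for each atom $a\in F$ with biorthogonal functional $f_a\in F^{*}$, continuity of $f_a$ together with $\lVert m(Tx_n)\rVert_F\to 0$ forces $f_a(m(Tx_n))\to 0$; combined with the uniform order bound $m(Tx_n)\leq\tilde e$ and the atomicity of $F$, this yields $m(Tx_n)\oc 0$ in $F$. Having checked sequential $p$-continuity, $p$-boundedness, and the disjoint-sequence condition, we conclude that $T:(X,p,E)\to(Y,m,F)$ is $p$-$M$-weakly compact.
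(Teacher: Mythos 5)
Your proposal is correct and follows essentially the same route as the paper's own proof: sequential $p$-continuity via Theorem \ref{normcontimpliespcont}, then passing a $p$-bounded disjoint sequence to the mixed-normed setting to get $\lVert m(Tx_n)\rVert_F\to 0$, and finally upgrading to order convergence using the order bound from $p$-boundedness together with the biorthogonal functionals of atoms. No substantive differences to report.
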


\begin{proof}
The assumptions, together with Theorem \ref{normcontimpliespcont}, imply that $T:(X,p,E)\to(Y,m,F)$ is sequentially $p$-continuous.

Assume $x_n$ to be a $p$-bounded disjoint sequence in $(X,p,E)$. Then $x_n$ is disjoint and norm bounded in $(E,p\text{-}\lVert\cdot\rVert_E)$. 
Since $T:(X,p\text{-}\lVert\cdot\rVert_E)\to(Y,m\text{-}\lVert\cdot\rVert_F)$ is $M$-weakly compact then $\lim\limits_{n\to\infty}m\text{-}\lVert Tx_n\rVert_F=0$ or $\lim\limits_{n\to\infty}\lVert m(Tx_n)\rVert_F$ $=0$. Since $x_n$ is $p$-bounded and $T:(X,p,E)\to(Y,m,F)$ is $p$-bounded then $m(Tx_n)$ is order bounded in $F$. Let $a\in F$ be an atom then  
$$
  \big\lvert f_a\big(m(Tx_n)\big)\big\rvert\leq\lVert f_a\rVert\lVert m(Tx_n)\rVert_F\to 0 \ \ \ \text{as}  \ n\to\infty.
$$ 
Since $F$ is atomic then $m(Tx_n)\oc 0$. Therefore, $T:(X,p,E)\to(Y,m,F)$ is $p$-$M$-weakly compact.
\end{proof}

\begin{prop}
Assume $(X,p,E)$ to be an LNS with an $AM$-space $(E,\lVert\cdot\rVert_E)$ possessing a strong unit, and $(Y,m,F)$ to be an LNVL with a $\sigma$-order continuous normed lattice $(F,\lVert\cdot\rVert_F)$.
If $T:(X,p,E)\to(Y,m,F)$ is $p$-$L$-weakly compact then $T:(X,p\text{-}\lVert\cdot\rVert_E)\to(Y,m\text{-}\lVert\cdot\rVert_F)$ is $L$-weakly compact.
\end{prop}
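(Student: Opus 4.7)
The plan is to unpack the definition of $L$-weak compactness in the mixed-norm setting and reduce it directly to the $p$-$L$-weak compactness hypothesis. Two features of the data do all the work: the $AM$-space-with-strong-unit structure of $E$ lets one trade norm-boundedness for $p$-boundedness at the source, while the $\sigma$-order continuity of $F$ lets one trade $o$-convergence for norm convergence at the target.

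First I would show that the closed unit ball $B_X$ of $(X,p\text{-}\lVert\cdot\rVert_E)$ is $p$-bounded in $(X,p,E)$. For any $x\in B_X$ one has $\lVert p(x)\rVert_E\le 1$, so $\{p(x):x\in B_X\}$ is norm-bounded in $E$; since $(E,\lVert\cdot\rVert_E)$ is an $AM$-space with strong unit, norm-bounded subsets coincide with order-bounded subsets, and there exists $e\in E_+$ with $p(x)\le e$ for all $x\in B_X$. Because $T$ is $p$-bounded (this is part of the definition of $p$-$L$-weak compactness), $T(B_X)$ is then $p$-bounded in $(Y,m,F)$, which forces $\lVert m(Tx)\rVert_F$ to be uniformly bounded on $B_X$; hence $T:(X,p\text{-}\lVert\cdot\rVert_E)\to(Y,m\text{-}\lVert\cdot\rVert_F)$ is norm bounded, so the notion of $L$-weak compactness actually applies.

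Now let $y_n$ be any disjoint sequence in $sol(T(B_X))$. Since $B_X$ is $p$-bounded in $(X,p,E)$, the $p$-$L$-weak compactness of $T$ applied to $A=B_X$ yields $m(y_n)\oc 0$ in $F$. By $\sigma$-order continuity of $(F,\lVert\cdot\rVert_F)$, every $o$-null sequence in $F$ is norm null, so $\lVert m(y_n)\rVert_F\to 0$, i.e.\ $m\text{-}\lVert y_n\rVert_F\to 0$; this is precisely the required condition. There is no real technical obstacle; the only subtlety worth flagging is that the paper's definition of $L$-weak compactness presupposes $T\in B$, which is exactly why the preliminary norm-boundedness observation above has to be recorded.
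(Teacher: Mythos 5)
Your proof is correct and follows essentially the same route as the paper's: the $AM$-space-with-strong-unit hypothesis makes $B_X$ $p$-bounded in $(X,p,E)$, the $p$-$L$-weak compactness applied to $A=B_X$ gives $m(y_n)\oc 0$ for disjoint $y_n$ in $sol(T(B_X))$, and the $\sigma$-order continuity of $F$ converts this to $m\text{-}\lVert y_n\rVert_F\to 0$. The only cosmetic difference is in how the operator is seen to be norm bounded between the mixed-normed spaces: the paper cites Proposition~\ref{seqpcontimpliesnormcont} (using the sequential $p$-continuity built into the definition of $p$-$L$-weak compactness), whereas you derive norm boundedness directly from the $p$-boundedness of $T$; both are legitimate.
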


\begin{proof}
Proposition \ref{seqpcontimpliesnormcont} implies that $T:(X,p\text{-}\lVert\cdot\rVert_E)\to(Y,m\text{-}\lVert\cdot\rVert_F)$ is norm continuous. 
Let $B_X$ be the closed unit ball of  $(X,p\text{-}\lVert\cdot\rVert_E)$. Then $p\text{-}\lVert x\rVert_E\leq 1$ or $\lVert p(x)\rVert_E\leq 1$ for all $x\in B_X$. 
Since $(E,\lVert\cdot\rVert_E)$ is an $AM$-space with a strong unit then there is an element $e\in E_+$ such that $p(x)\leq e$ for each $x\in B_X$. So $B_X$ is $p$-bounded. 
Let $y_n$ be a disjoint sequence in $sol(T(B_X))$. Since $T:(X,p,E)\to(Y,m,F)$ is $p$-$L$-weakly compact then $m(y_n)\oc 0$ in $F$. Since  $(F,\lVert\cdot\rVert_F)$ is $\sigma$-order continuous normed lattice 
then $\lVert m(y_n)\rVert_F\to 0$ or $\lim\limits_{n\to\infty}m\text{-}\lVert y_n\rVert_F = 0$. So $T:(X,p\text{-}\lVert\cdot\rVert_E)\to(Y,m\text{-}\lVert\cdot\rVert_F)$ is $L$-weakly compact.
\end{proof} 

\begin{prop}
Let $(X,p,E)$ be an LNS with a $\sigma$-order continuous normed lattice, $(Y,m,F)$ be an LNVL with an atomic normed lattice $(F,\lVert\cdot\rVert_F)$. Assume that$:$
\begin{enumerate}
\item[(i)] $T:(X,p,E)\to(Y,m,F)$ is $p$-bounded, and
\item[(ii)] $T:(X,p\text{-}\lVert\cdot\rVert_E)\to(Y,m\text{-}\lVert\cdot\rVert_F)$ is $L$-weakly compact.
\end{enumerate}
Then $T:(X,p,E)\to(Y,m,F)$ is $p$-$L$-weakly compact.
\end{prop}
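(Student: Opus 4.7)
The plan is to verify the two clauses defining a $p$-$L$-weakly compact operator, namely that $T$ is $p$-bounded (which is hypothesis~(i)) and sequentially $p$-continuous, together with $m(y_n)\oc 0$ in $F$ for every disjoint sequence $y_n$ in $sol(T(A))$, where $A\subseteq X$ is $p$-bounded. Sequential $p$-continuity will follow from Theorem~\ref{normcontimpliespcont}: its hypotheses are met since $E$ is $\sigma$-order continuous, $F$ is atomic, hypothesis~(i) provides the required $p$-boundedness, and hypothesis~(ii) in particular makes $T$ norm continuous in the mixed norms (every $L$-weakly compact operator is automatically norm bounded).

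For the main claim, I would fix a $p$-bounded $A\subseteq X$ with $p(a)\leq e$ for all $a\in A$, and set $c:=\|e\|_E$. Then every $a\in A$ satisfies $p\text{-}\|a\|_E\leq c$, so $A\subseteq cB_X$, where $B_X$ denotes the closed unit ball of $(X,p\text{-}\|\cdot\|_E)$. Consequently $sol(T(A))\subseteq c\cdot sol(T(B_X))$, so if $y_n$ is disjoint in $sol(T(A))$, then $y_n/c$ is disjoint in $sol(T(B_X))$, and hypothesis~(ii) yields $\|m(y_n)\|_F\to 0$.

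To upgrade this to $m(y_n)\oc 0$ I would first establish order boundedness of $m(y_n)$: since $T$ is $p$-bounded and $A$ is $p$-bounded, there exists $w\in F_+$ with $m(Ta)\leq w$ for all $a\in A$, and by monotonicity of the vector norm $m$ on the LNVL $Y$, any $y\in sol(T(A))$ with $|y|\leq|Ta|$ satisfies $m(y)=m(|y|)\leq m(|Ta|)=m(Ta)\leq w$. With $0\leq m(y_n)\leq w$ in hand, the atomicity of $F$ closes the argument in the same manner as in the $p$-$M$-weakly analogue earlier in this section: for each atom $a\in F$ with biorthogonal functional $f_a$, $|f_a(m(y_n))|\leq\|f_a\|\,\|m(y_n)\|_F\to 0$, and the coordinate $\lambda_n^{(a)}\in[0,\infty)$ with $m(y_n)\wedge a=\lambda_n^{(a)}a$ satisfies $\lambda_n^{(a)}\leq f_a(m(y_n))\to 0$; hence $m(y_n)\wedge a\oc 0$ for every atom $a$, Lemma~\ref{atomipointwiseconvergence} gives $m(y_n)\uoc 0$, and the order bound $m(y_n)\leq w$ promotes this to $m(y_n)\oc 0$.

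The principal obstacle is precisely this last atomicity step, where the translation from norm-null-and-order-bounded to order-null requires the full strength of the assumptions on $F$; the preceding work is little more than a careful transcription of the $L$-weak compactness of $T$ in the mixed norms into the lattice-norm language, directly mirroring the structure of the $p$-$M$-weakly analogue.
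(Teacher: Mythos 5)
Your proposal is correct and follows essentially the same route as the paper's proof: sequential $p$-continuity via Theorem \ref{normcontimpliespcont}, transfer of $p$-boundedness of $A$ to norm boundedness in the mixed norm so that $L$-weak compactness yields $\lVert m(y_n)\rVert_F\to 0$, order boundedness of $m(y_n)$ from the $p$-boundedness of $T$, and finally the atom-by-atom argument with the biorthogonal functionals to upgrade to $m(y_n)\oc 0$. The only differences are cosmetic: you make explicit the rescaling $A\subseteq cB_X$ and the pointwise-plus-order-bounded step via Lemma \ref{atomipointwiseconvergence}, both of which the paper leaves implicit.
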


\begin{proof}
Theorem \ref{normcontimpliespcont} implies that $T:(X,p,E)\to(Y,m,F)$ is sequentially $p$-continuous. Let $A$ be a $p$-bounded set. 
Then there is $e\in E_+$ such that $p(a)\leq e$ for all $a\in A$. Hence, $\lVert p(a)\rVert_E\leq\lVert e\rVert_E$ for all $a\in A$ or $p\text{-}\lVert a\rVert_E\leq\lVert e\rVert_E$ for each $a\in A$. 
Thus, $A$ is norm bounded in $(X,p\text{-}\lVert\cdot\rVert_E)$. Let $y_n$ be a disjoint sequence in $sol(T(A))$. Since $T:(X,p\text{-}\lVert\cdot\rVert_E)\to(Y,m\text{-}\lVert\cdot\rVert_F)$ is $L$-weakly compact 
then $\lim\limits_{n\to\infty}m\text{-}\lVert y_n\rVert_F=0$ or $\lim\limits_{n\to\infty}\lVert m(y_n)\rVert_F=0$.\\
Since $T:(X,p,E)\to(Y,m,F)$ is $p$-bounded and $A$ is $p$-bounded then $T(A)$ is $p$-bounded in $Y$ and so $sol(T(A))$ is $p$-bounded in $Y$. 
Hence, $y_n$ is a $p$-bounded sequence in $(Y,m,F)$;  i.e. $m(y_n)$ is order bounded in $F$. 
Let $a\in F$ be an atom and consider its biorthogonal functional $f_a$. Then 
$$
  \lvert f_a\big(m(y_n)\big)\rvert\leq\lVert f_a\rVert\lVert m(y_n)\rVert_F\to 0\ \ \text{as}\ n\to\infty.
$$  
So, for any atom $a\in F$, $\lim\limits_{n\to\infty}f_a\big(m(y_n)\big)=0$ and, since $m(y_n)$ is order bounded in an atomic vector lattice $F$, $m(y_n)\oc 0$ in $F$. 
Thus, $T$ is $p$-$L$-weakly compact.
\end{proof}

\section{$up$-Continuous and $up$-Compact Operators}

Using the $up$-convergence in LNVLs, we introduce the following notions. 

\begin{definition}
Let $X$, $Y$ be two LNVLs and $T\in L(X,Y)$. Then$:$
\begin{enumerate}
\item[(1)] $T$ is called {\em $up$-continuous} if $x_\alpha\upc 0$ in $X$ implies $Tx_\alpha\upc 0$ in $Y$, 
if the condition holds for sequences then $T$ is called {\em sequentially $up$-continuous}$;$
\item[(2)] $T$ is called {\em $up$-compact} if for any $p$-bounded net $x_\alpha$ in $X$ 
there is a subnet $x_{\alpha_\beta}$ such that $Tx_{\alpha_\beta}\upc y$ in $Y$ for some $y\in Y$$;$ 
\item[(3)] $T$ is called {\em sequentially-$up$-compact} if for any $p$-bounded sequence $x_n$ in $X$ 
there is a subsequence $x_{n_k}$ such that $Tx_{n_k}\upc y$ in $Y$ for some $y\in Y$. 
\end{enumerate}
\end{definition}

\begin{rem} \ \
\begin{enumerate}
\item[(i)] The notion of $up$-continuous operators is motivated by two recent notions, namely$:$ $\sigma$-unbounded order continuous $($$\sigma uo$-continuous$)$ mappings between vector lattices 
$($see \cite[p.23]{EM}$)$, and $un$-continuous functionals on Banach lattices $($see \cite[p.17]{KMT}$)$.
\item[(ii)] If $T$ is $($sequentially$)$ $p$-continuous operator then $T$ is $($sequentially$)$ $up$-continuous.
\item[(iii)] If $T$ is $($sequentially$)$ $p$-compact operator then $T$ is $($sequentially$)$ $up$-compact.
\item[(iv)] Let $(X,\lVert\cdot\rVert_X)$ be a normed space and $(Y,\lVert\cdot\rVert_Y)$ be a normed lattice. An operator $T\in B(X,Y)$ is called {\em $($sequentially$)$ $un$-compact} if 
for every norm bounded net $x_\alpha$ $($respectively, every norm bounded sequence $x_n$$)$, its image has a subnet $($respectively, subsequence$)$, which is $un$-convergent; see \cite[Sec.9,p.28]{KMT}. 
Therefore, $T\in B(X,Y)$ is $($sequentially$)$ $un$-compact iff $T:(X,\lVert\cdot\rVert_X,\mathbb{R})\to(Y,\lVert\cdot\rVert_Y,\mathbb{R})$ 
is $($sequentially$)$ $up$-compact.
\end{enumerate}
\end{rem}

\begin{prop}
Let $(X,E)$, $(Y,F)$ be two LNVLs and $T\in L(X,Y)$. If $T$ is $up$-compact and $p$-semicompact operator then $T$ is $p$-compact.
\end{prop}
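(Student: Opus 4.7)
The plan is to take an arbitrary $p$-bounded net $x_\alpha$ in $X$, apply $up$-compactness to extract a subnet $x_{\alpha_\beta}$ with $Tx_{\alpha_\beta}\upc y$ for some $y\in Y$, and then upgrade this $up$-convergence to $p$-convergence using the $p$-almost order boundedness supplied by $p$-semicompactness. This reduces the whole proposition to a stand-alone lemma about LNVLs: \emph{if a net $z_\beta$ in $(Y,m,F)$ is both $up$-null and $p$-almost order bounded, then it is $p$-null.} Once that is granted, applying it to $z_\beta:=Tx_{\alpha_\beta}-y$ completes the proof, because $\{Tx_{\alpha_\beta}\}$ is $p$-almost order bounded by $p$-semicompactness and shifting by $-y$ preserves this (from $|z-y|\le|z|+|y|$, each dominant $v_w$ can be replaced by $v_w+|y|$).

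To prove the lemma, fix $w\in F_+$ and choose $v_w\in Y_+$ with $m((|z_\beta|-v_w)^+)\le w$ for all $\beta$. The decomposition $|z_\beta|=|z_\beta|\wedge v_w+(|z_\beta|-v_w)^+$ together with monotonicity of $m$ gives
\[
 m(z_\beta)\;\le\;m(|z_\beta|\wedge v_w)\;+\;w.
\]
Since $z_\beta\upc 0$, the term $m(|z_\beta|\wedge v_w)$ is $o$-null in $F$, so there is a net $f_{w,\delta}\downarrow 0$ with $m(|z_\beta|\wedge v_w)\le f_{w,\delta}$ eventually. Thus every $f_{w,\delta}+w$ is an eventual majorant of $m(z_\beta)$. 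Let $G\subseteq F_+$ be the collection of \emph{all} eventual majorants of $m(z_\beta)$. Then $G$ is downward directed (closed under finite $\wedge$ in $F$), and any lower bound $h\ge 0$ of $G$ must satisfy $h\le f_{w,\delta}+w$ for every $w,\delta$. Fixing $w$ and letting $\delta$ vary gives $h\le w$ for every $w\in F_+$, and Archimedeanness of $F$ then forces $h=0$. Hence $G$, viewed as a net along itself in reverse order, satisfies $G\downarrow 0$, which exhibits $m(z_\beta)\oc 0$, i.e.\ $z_\beta\pc 0$.

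The main obstacle is this upgrading lemma. The normed-lattice analogue (almost order bounded plus $un$-null implies norm null) is routine because the ``modulus'' is a single real number; in the LNVL setting the modulus lives in $F$, so one has to manufacture a genuine downward directed family of majorants in $F$ converging to $0$. The trick above is to let both the $p$-almost-order-bound $w$ and the witnessing $o$-null dominator $f_{w,\delta}$ vary simultaneously, then exploit Archimedeanness of $F$ to show that the collection of eventual majorants of $m(z_\beta)$ admits no positive lower bound.
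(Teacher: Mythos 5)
Your argument is correct and follows the same route as the paper's: use $p$-semicompactness to see that the image of the $p$-bounded net is $p$-almost order bounded, extract a $up$-convergent subnet by $up$-compactness, and upgrade $up$-convergence to $p$-convergence via the almost order boundedness. The only difference is that the paper outsources the upgrading step to \cite[Prop.9]{AEEM} --- which is precisely your stand-alone lemma that a $p$-almost order bounded $up$-null net is $p$-null --- whereas you prove it from scratch; your proof of that lemma is sound (the one point worth tightening is that a lower bound $h$ of $G$ need not be positive a priori, but since every element of $G$ is positive, $h^{+}$ is again a lower bound, so your argument applied to $h^{+}$ yields $h\leq 0$ and hence $\inf G=0$ as required).
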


\begin{proof}
Let $x_\alpha$ be a $p$-bounded net in $X$. Then $Tx_\alpha$ is $p$-almost order bounded net in $Y$, as $T$ is $p$-semicompact operator.  
Moreover, since $T$ is $up$-compact then there is a subnet $x_{\alpha_\beta}$ such that $Tx_{\alpha_\beta}\upc y$ for some $y\in Y$. 
It follows by \cite[Prop.9]{AEEM}, that $Tx_{\alpha_\beta}\pc y$. 
Therefore, $T$ is $p$-compact.
\end{proof}

Similar to Proposition \ref{leftandrightmultiplication}, for any $S,T \in L(X)$, where $X$ is an LNVL the following holds:
\begin{enumerate}
\item[(i)] If $S$ is $p$-bounded and  $T$ is $up$-compact then $T\circ S$ is $up$-compact.
\item[(ii)] If $S$ is $up$-continuous and $T$ is $up$-compact then $S\circ T$ is $up$-compact.
\end{enumerate}

Now we investigate composition of a sequentially $up$-compact operator 
with a dominated lattice homomorphism.
\begin{thm}\label{sequentially $up$-compactness}
Let $(X,p,E)$ be an LNVL, $(Y,m,F)$ an LNVL with an order continuous Banach lattice $(F,\lVert\cdot\rVert_F)$, and $(Z,q,G)$ an LNVL with a Banach lattice $(G,\lVert\cdot\rVert_G)$. 
If $T\in L(X,Y)$ is a sequentially $up$-compact operator and $S\in L(Y,Z)$ is a dominated surjective lattice homomorphism then $S\circ T$ is sequentially $up$-compact.
\end{thm}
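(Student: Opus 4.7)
The plan is to show that the subsequence supplied by the sequential $up$-compactness of $T$ already witnesses sequential $up$-compactness of $S\circ T$, with no further refinement required. Concretely, fix a $p$-bounded sequence $x_n$ in $X$. By hypothesis on $T$, pass to a subsequence $x_{n_k}$ and a point $y\in Y$ with $Tx_{n_k}\upc y$ in $Y$. I will show that $STx_{n_k}\upc Sy$ in $Z$, i.e.\ that $q(|STx_{n_k}-Sy|\wedge v)\oc 0$ in $G$ for every $v\in Z_+$.

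Fix $v\in Z_+$. Since $S$ is a surjective lattice homomorphism, pick $u'\in Y$ with $Su'=v$ and set $u:=|u'|\in Y_+$; then $Su=S|u'|=|Su'|=|v|=v$. Using the lattice-homomorphism property of $S$,
$$|STx_{n_k}-Sy|\wedge v \;=\; S|Tx_{n_k}-y|\wedge Su \;=\; S\bigl(|Tx_{n_k}-y|\wedge u\bigr).$$
Let $R:F\to G$ denote a positive dominant of $S$, so $q(Sw)\leq R(m(w))$ for all $w\in Y$. Writing $f_k:=m(|Tx_{n_k}-y|\wedge u)$, this gives the key estimate $q(|STx_{n_k}-Sy|\wedge v)\leq R(f_k)$.

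It remains to prove $R(f_k)\oc 0$ in $G$. From $Tx_{n_k}\upc y$ we have $f_k\oc 0$ in $F$, so there is a net $g_\beta\downarrow 0$ in $F$ with $f_k\leq g_\beta$ eventually for each $\beta$. Order continuity of the Banach lattice $F$ upgrades $g_\beta\downarrow 0$ to $\|g_\beta\|_F\to 0$; since $R$ is positive between Banach lattices it is norm bounded, hence $\|R(g_\beta)\|_G\to 0$. The net $R(g_\beta)$ is decreasing in $G$, and any lower bound $z\leq R(g_\beta)$ satisfies $z^+\leq R(g_\beta)$, whence $\|z^+\|_G=0$ and $z\leq 0$; thus $R(g_\beta)\downarrow 0$ in $G$. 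Combined with $R(f_k)\leq R(g_\beta)$ eventually, this yields $R(f_k)\oc 0$ in $G$, and squeezing finishes the proof. The principal obstacle is precisely this last move: $R$ is not assumed order continuous, so the passage from $f_k\oc 0$ in $F$ to $R(f_k)\oc 0$ in $G$ is not automatic; it is achieved by routing through norm convergence using order continuity of $F$, and then reconstructing order convergence in $G$ from monotonicity plus the Archimedean/norm-squeezing argument for lower bounds.
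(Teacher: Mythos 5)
Your proof is correct, and its overall strategy is the same as the paper's: pull the test vector $v\in Z_+$ back through the surjective lattice homomorphism, use the lattice-homomorphism identity to rewrite $|STx_{n_k}-Sy|\wedge v$ as $S(|Tx_{n_k}-y|\wedge u)$, transfer the estimate through the dominant $R$, and then exploit order continuity of $F$ together with norm boundedness of the positive operator $R$ to push the convergence into $G$. Where you genuinely diverge is in the last step. The paper first replaces $m(|Tx_{n_k}-y|\wedge u)\oc 0$ by a dominating decreasing sequence $f_k\downarrow 0$ (via a cited result of Abramovich--Sirotkin, which uses order completeness of $F$), gets $\|Rf_k\|_G\to 0$, and then invokes Vulikh's theorem to extract a further subsequence $f_{k_j}$ with $Rf_{k_j}\oc 0$; this forces a passage to a sub-subsequence $x_{n_{k_j}}$ that in principle depends on $u$ (harmless only because $Rf_k$ is decreasing, so in fact the whole sequence order-converges, but the proof does not say so). You instead work directly with the dominating net $g_\beta\downarrow 0$ from the definition of $f_k\oc 0$ and prove $R(g_\beta)\downarrow 0$ by the elementary lower-bound argument ($z\leq R(g_\beta)$ forces $z^+\leq R(g_\beta)$, hence $\|z^+\|_G=0$). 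This buys you three things: no appeal to order completeness of $F$, no subsequence extraction (so the subsequence $x_{n_k}$ is uniform in $v$), and no reliance on Vulikh's theorem. Your treatment of surjectivity (replacing a preimage $u'$ by $|u'|$) is also more careful than the paper's unexplained assertion that a positive preimage exists.
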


\begin{proof}
Let $x_n$ be a $p$-bounded sequence in $X$. Since $T$ is sequentially $up$-compact then there is a subsequence $x_{n_k}$ such that $Tx_{n_k}\upc y$ in $Y$ for some $y\in Y$. 
Let $u\in Z_+$. Since $S$ is surjective lattice homomorphism, we have some $v\in Y_+$ such that $Sv=u$. Since $Tx_{n_k}\upc y$ then $m(\lvert Tx_{n_k}-y\rvert\wedge v)\oc 0$ in $F$. 
Clearly, $F$ is order complete and so, by \cite[Prop.1.5]{YG}, there are $f_k\downarrow 0$ and $k_0\in \mathbb{N}$ such that
\begin{eqnarray}\label{dominatingsequence}
m(\lvert Tx_{n_k}-y\rvert\wedge v)\leq f_k \ \ \ \ \ \ (k\geq k_0).
\end{eqnarray}
Note also  $\lVert f_k\rVert_F\downarrow 0$ in $F$, as $(F,\lVert\cdot\rVert_F)$ is an order continuous Banach lattice. Since $S$ is dominated then there is a positive operator $R:F\to G$ such that 
$$
  q\big(S(\lvert Tx_{n_k}-y\rvert\wedge v)\big)\leq R\big(m(\lvert Tx_{n_k}-y\rvert\wedge v)\big).
$$ 
Taking into account that $S$ is a lattice homomorphism and $Sv=u$, we get, by (\ref{dominatingsequence}), that
\begin{eqnarray}\label{Rdominatingsequence}
q(\lvert S\circ Tx_{n_{k}}-Sy\rvert\wedge u)\leq Rf_k\ \ \ \ \ \ (k\geq k_0).
\end{eqnarray}
Since $R$ is positive then by \cite[Thm.4.3]{AB} it is norm continuous. Hence, $\lVert Rf_k\rVert_G\downarrow 0$.  
Also, by \cite[Thm.VII.2.1]{V}, there is a subsequence $f_{k_j}$ of $(f_k)_{k\geq k_0}$ such that $Rf_{k_j}\oc 0$ in $G$, and so $Rf_{k_j}\downarrow 0$ in $G$. So (\ref{Rdominatingsequence}) becomes 
$$
  q(\lvert S\circ Tx_{n_{k_j}}-Sy\rvert\wedge u)\leq Rf_{k_j}\ \ \ \ \ \ (j\in\mathbb{N}).
$$
Since $u\in Z_+$ is arbitrary, $S\circ T(x_{n_{k_j}})\upc Sy$. Therefore, $S\circ T$ is sequentially $up$-compact.
\end{proof}

\begin{rem}
In connection with the proof of Theorem \ref{sequentially $up$-compactness} it should be mentioned that, if the operator $T$ is $up$-compact  
and $S$ is a surjective lattice homomorphism with an order continuous dominant then it can be easily seen that $S\circ T$ is $up$-compact.
\end{rem}

Recall that, for an LNVL $(X,p,E)$, a sublattice $Y$ of $X$ is called {\em $up$-regular} if, for any net $y_\alpha$ in $Y$, 
the convergence $y_\alpha\upc 0$ in $Y$ implies $y_\alpha\upc 0$ in $X$; see \cite[Def.10 and Sec.3.4]{AEEM}.

\begin{cor}\label{rangeisup-regular}
Let $(X,p,E)$ be an LNVL, $(Y,m,F)$ an LNVL with an order continuous Banach lattice $(F,\lVert\cdot\rVert_F)$, 
and $(Z,q,G)$ an LNVL with a Banach lattice $(G,\lVert\cdot\rVert_G)$. If  $T\in L(X,Y)$ is a sequentially $up$-compact operator, 
$S\in L(Y,Z)$ is a dominated lattice homomorphism, and $S(Y)$ is $up$-regular in $Z$ then $S\circ T$ is sequentially $up$-compact.
\end{cor}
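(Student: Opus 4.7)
The plan is to reduce the corollary to Theorem \ref{sequentially $up$-compactness} by restricting the codomain of $S$ to $S(Y)$, and then transferring the $up$-convergence from $S(Y)$ back to $Z$ via the $up$-regularity hypothesis.

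First I would observe that, since $S$ is a lattice homomorphism, $S(Y)$ is a vector sublattice of $Z$. Equipping $S(Y)$ with the restriction $q|_{S(Y)}$ of the lattice norm, the triple $(S(Y),q|_{S(Y)},G)$ becomes an LNVL whose target Banach lattice is the same $(G,\lVert\cdot\rVert_G)$. The corestriction $\tilde S: Y \to S(Y)$ of $S$ is then a surjective lattice homomorphism, and it remains dominated by the same positive operator $R:F\to G$ that dominates $S$ (dominatedness is a property of the inequality $q(Sv)\le R(m(v))$ and is unaffected by restricting the codomain).

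Next I would invoke Theorem \ref{sequentially $up$-compactness} with the triples $(X,p,E)$, $(Y,m,F)$, $(S(Y),q|_{S(Y)},G)$ and the operators $T:X\to Y$ and $\tilde S:Y\to S(Y)$. The hypotheses transfer verbatim: $T$ is sequentially $up$-compact, $\tilde S$ is a dominated surjective lattice homomorphism, $F$ is an order continuous Banach lattice, and $G$ is a Banach lattice. The theorem therefore yields that $\tilde S\circ T:X\to S(Y)$ is sequentially $up$-compact. So for any $p$-bounded sequence $x_n$ in $X$ there exist a subsequence $x_{n_k}$ and some $y\in Y$ such that $\tilde S\circ T x_{n_k} = S\circ T x_{n_k} \upc Sy$ in $S(Y)$.

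Finally I would use that $S(Y)$ is $up$-regular in $Z$: by definition, $up$-null nets in $S(Y)$ remain $up$-null when viewed in $Z$. Applying this to the sequence $S\circ T x_{n_k} - Sy$ gives $S\circ T x_{n_k}\upc Sy$ in $Z$, which is the desired conclusion.

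The only real obstacle is the bookkeeping of the restricted structure — namely, checking that $(S(Y),q|_{S(Y)},G)$ satisfies the LNVL axioms and that $\tilde S$ inherits the dominated lattice-homomorphism properties — but both of these are immediate from the definitions. No new analytic content beyond Theorem \ref{sequentially $up$-compactness} and the definition of $up$-regularity is required.
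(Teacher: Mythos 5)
Your proposal is correct and follows essentially the same route as the paper's proof: view $(S(Y),q,G)$ as an LNVL, apply Theorem \ref{sequentially $up$-compactness} to the corestriction of $S$ onto $S(Y)$, and then use $up$-regularity to transfer the $up$-convergence from $S(Y)$ to $Z$. The extra bookkeeping you supply (that the corestriction remains dominated and surjective) is left implicit in the paper but is exactly the right justification.
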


\begin{proof}
Since $S$ is a lattice homomorphism then $S(Y)$ is a vector sublattice of $Z$. So $(S(Y),q,G)$ is an LNVL. Thus, by Theorem \ref{sequentially $up$-compactness}, 
we have $S\circ T:(X,p,E)\to(S(Y),q,G)$ is sequentially $up$-compact.

Next, we show that $S\circ T:(X,p,E)\to(Z,q,G)$ is sequentially $up$-compact. Let $x_n$ be a $p$-bounded sequence in $X$. 
Then there is a subsequence $x_{n_k}$ such that $S\circ T(x_{n_k})\upc z$ in $S(Y)$ for some $z\in S(Y)$. Since $S(Y)$ is $up$-regular in $Z$, 
we have  $S\circ T(x_{n_k})\upc z$ in $Z$. Therefore, $S\circ T:X\to Z$ is sequentially $up$-compact.
\end{proof}

The next result is similar to \cite[Prop.9.4.]{KMT}.

\begin{cor}\label{idealisup-regular}
Let $(X,p,E)$ be an LNVL, $(Y,m,F)$ an LNVL with an order continuous Banach lattice $(F,\lVert\cdot\rVert_F)$, and $(Z,q,G)$ an LNVL with a Banach lattice $(G,\lVert\cdot\rVert_G)$. 
If $T\in L(X,Y)$ is a sequentially $up$-compact operator, $S\in L(Y,Z)$ is a dominated lattice homomorphism, and $I_{S(Y)}$ $($the ideal generated by $S(Y)$$)$ is $up$-regular in $Z$ 
then $S\circ T$ is sequentially $up$-compact.
\end{cor}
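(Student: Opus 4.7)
The plan is to factor the proof through the intermediate LNVL $(I_{S(Y)}, q|_{I_{S(Y)}}, G)$, which inherits its LNVL structure from $Z$, and thereby reduce to Corollary \ref{rangeisup-regular}. Regarding $S$ as a map $Y \to I_{S(Y)}$ (since $S(Y) \subseteq I_{S(Y)}$), it remains a dominated lattice homomorphism with the same dominant. Since $(G,\lVert\cdot\rVert_G)$ is a Banach lattice and $(F,\lVert\cdot\rVert_F)$ is an order continuous Banach lattice, all hypotheses of Corollary \ref{rangeisup-regular} for the pair $T: X \to Y$ and $S: Y \to I_{S(Y)}$ are in place except for the $up$-regularity of $S(Y)$ in the new codomain $I_{S(Y)}$, which must be verified separately.

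The verification of this $up$-regularity is the crux of the argument. Any positive element $i \in I_{S(Y)}$ satisfies $0 \leq i \leq n|s|$ for some $s \in S(Y)$ and $n \in \mathbb{N}$; because $S(Y)$ is a vector sublattice of $Z$, we have $|s| \in S(Y)$ and hence $n|s| \in S(Y)_+$. So, if $y_\alpha$ is a net in $S(Y)$ with $y_\alpha \upc 0$ in $S(Y)$, then $q(|y_\alpha| \wedge n|s|) \oc 0$ in $G$ by the very definition of $up$-convergence in $S(Y)$. Monotonicity of $q$ together with the squeeze $0 \leq |y_\alpha| \wedge i \leq |y_\alpha| \wedge n|s|$ yields $q(|y_\alpha| \wedge i) \oc 0$ in $G$ for every $i \in (I_{S(Y)})_+$, establishing that $S(Y)$ is $up$-regular in $I_{S(Y)}$.

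Having this, Corollary \ref{rangeisup-regular} delivers that $S \circ T: X \to I_{S(Y)}$ is sequentially $up$-compact. Finally, for any $p$-bounded sequence $x_n$ in $X$, I would extract a subsequence $x_{n_k}$ and $w \in I_{S(Y)}$ with $S(Tx_{n_k}) \upc w$ in $I_{S(Y)}$; the hypothesis that $I_{S(Y)}$ is $up$-regular in $Z$ immediately lifts this to $S(Tx_{n_k}) \upc w$ in $Z$, yielding sequential $up$-compactness of $S \circ T: X \to Z$. The main obstacle is simply recognizing the two-stage factorization through the generated ideal; once one observes that scalar multiples within a sublattice suffice to dominate any element of the ideal it generates, the remaining steps are routine invocations of earlier results.
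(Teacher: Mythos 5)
Your proposal is correct. It reaches the same intermediate conclusion as the paper -- that $S\circ T:(X,p,E)\to(I_{S(Y)},q,G)$ is sequentially $up$-compact, after which the hypothesis that $I_{S(Y)}$ is $up$-regular in $Z$ finishes the job -- but it gets there by a cleaner decomposition. The paper re-runs the quantitative estimate of Theorem \ref{sequentially $up$-compactness}: it picks $0\le u\in I_{S(Y)}$, dominates it by $Sy$ with $y\in Y_+$, and repeats the dominant/order-continuity argument verbatim with $u$ in place of the surjectivity hypothesis. You instead isolate the purely order-theoretic observation that a sublattice is automatically $up$-regular in the ideal it generates (since every $0\le i\in I_{S(Y)}$ satisfies $i\le n\lvert s\rvert\in S(Y)_+$, monotonicity of $q$ and the squeeze $0\le\lvert z_\alpha\rvert\wedge i\le\lvert z_\alpha\rvert\wedge n\lvert s\rvert$ do the rest), which lets you invoke Corollary \ref{rangeisup-regular} as a black box and then chain the two $up$-regularity statements. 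Both arguments hinge on the same fact -- positive elements of $I_{S(Y)}$ are dominated by elements of $S(Y_+)$, using that $S$ is a lattice homomorphism so that $n\lvert s\rvert=S(n\lvert y'\rvert)$ -- but your modularization avoids duplicating the analytic estimate and makes the logical dependence on the earlier corollary explicit; the paper's version is more self-contained at the cost of repetition. All steps in your verification (restriction of the codomain preserves the dominant, monotonicity of $q$ on the LNVL $(Z,q,G)$, and the final lift via $up$-regularity of $I_{S(Y)}$ in $Z$) are sound.
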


\begin{proof}
Let $x_n$ be a $p$-bounded sequence in $X$. Since $T$ sequentially $up$-compact, there exist a subsequence $x_{n_k}$ and $y_0\in Y$ such that $Tx_{n_k}\upc y_0$ in $Y$. 
Let $0\leq u\in I_{S(Y)}$. Then there is $y\in Y_+$ such that $0\leq u\leq Sy$. Therefore, we have for a dominant $R$:
\begin{eqnarray*}
&&q\big(S(\lvert Tx_{n_k}-y_0\rvert\wedge y)\big)\leq R\big(m(\lvert Tx_{n_k}-y_0\rvert\wedge y)\big)
\end{eqnarray*}
and so 
$$ 
  q\big((\lvert STx_{n_k}-Sy_0\rvert\wedge Sy)\big)\leq R\big(m(\lvert Tx_{n_k}-y_0\rvert\wedge y)\big).
$$
It follows from $0\leq u\leq Sy$, that
$$
  q\big((\lvert STx_{n_k}-Sy_0\rvert\wedge u)\big)\leq R\big(m(\lvert Tx_{n_k}-y_0\rvert\wedge u)\big).
$$

Now, the argument given in the proof of Theorem \ref{sequentially $up$-compactness} can be repeated here as well. 
Thus, we have that $S\circ T:(X,p,E)\to(I_{S(Y)},q,G)$ is sequentially $up$-compact. 
Since $I_{S(Y)}$ is $up$-regular in $Z$ then it can be easily seen that $S\circ T:X\to Z$ is sequentially $up$-compact.
\end{proof}

We conclude this section by a result which might be compared with Proposition 9.9 in \cite{KMT}.

\begin{prop}
Let $(X,p,E)$ be an LNS and let $(Y,\lVert\cdot\rVert_Y)$ be a $\sigma$-order continuous normed lattice. 
If $T:(X,p,E)\to(Y,\lvert\cdot\rvert,Y)$ is sequentially $up$-compact and $p$-bounded then $T:(X,p,E)\to(Y,\lVert\cdot\rVert_Y)$ is $GAM$-compact.
\end{prop}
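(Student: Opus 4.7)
My plan is to reduce $GAM$-compactness to relative sequential compactness in the metric space $(Y,\lVert\cdot\rVert_Y)$, then show that every sequence in $T(A)$ admits a norm-convergent subsequence by upgrading sequential $up$-compactness to sequential order convergence via the $p$-boundedness hypothesis.

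Fix a $p$-bounded subset $A\subseteq X$; I must verify that $T(A)$ is relatively compact in $(Y,\lVert\cdot\rVert_Y)$. Since this is a metric space, it suffices to show that every sequence in $T(A)$ has a norm-convergent subsequence. Let $(y_n)\subseteq T(A)$, pick $x_n\in A$ with $y_n=Tx_n$, and apply sequential $up$-compactness of $T$ to extract a subsequence $x_{n_k}$ and some $y\in Y$ with $Tx_{n_k}\upc y$ in $(Y,\lvert\cdot\rvert,Y)$. Unwinding the definition, this means $\lvert Tx_{n_k}-y\rvert\wedge u\oc 0$ in $Y$ for every $u\in Y_+$.

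The key upgrade uses $p$-boundedness. Since $A$ is $p$-bounded in $(X,p,E)$ and $T$ is a $p$-bounded operator from $(X,p,E)$ into $(Y,\lvert\cdot\rvert,Y)$, the image $T(A)$ is $p$-bounded in $(Y,\lvert\cdot\rvert,Y)$, i.e.\ order bounded in $Y$; so there exists $v\in Y_+$ with $\lvert Tx_n\rvert\leq v$ for all $n$. Then
\[
\lvert Tx_{n_k}-y\rvert\ \leq\ \lvert Tx_{n_k}\rvert+\lvert y\rvert\ \leq\ v+\lvert y\rvert.
\]
Choosing the particular element $u:=v+\lvert y\rvert\in Y_+$ in the $up$-convergence condition gives $\lvert Tx_{n_k}-y\rvert\wedge u=\lvert Tx_{n_k}-y\rvert$, hence $\lvert Tx_{n_k}-y\rvert\oc 0$, i.e.\ $Tx_{n_k}\oc y$ in $Y$.

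Finally, since $(Y,\lVert\cdot\rVert_Y)$ is a $\sigma$-order continuous normed lattice, sequential order convergence to $y$ forces $\lVert Tx_{n_k}-y\rVert_Y\to 0$ (this is the same standard fact employed in Proposition \ref{seq.p-compactimpliescompact}). Thus every sequence in $T(A)$ has a norm-convergent subsequence, so $T(A)$ is relatively compact and $T:(X,p,E)\to(Y,\lVert\cdot\rVert_Y)$ is $GAM$-compact. There is no serious obstacle here; the only point requiring care is the passage from $up$-convergence to genuine order convergence, which is precisely where the order boundedness provided by the $p$-boundedness hypothesis is essential.
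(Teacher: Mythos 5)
Your proof is correct and follows essentially the same route as the paper: extract a subsequence with $Tx_{n_k}\upc y$, use $p$-boundedness to get order boundedness of the image, and use $\sigma$-order continuity to pass to norm convergence. The only cosmetic difference is the order of the last two steps (you collapse $up$-convergence to order convergence first and then apply $\sigma$-order continuity, while the paper passes from $up$- to $un$-convergence first and then invokes order boundedness), which amounts to the same two ingredients.
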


\begin{proof}
Let $x_n$ be a $p$-bounded sequence in $X$. Since $T$ is $up$-compact, there exist a subsequence $x_{n_k}$ and some $y\in Y$ such that  
$Tx_{n_k}\upc y$ in $(Y,\lvert\cdot\rvert,Y)$ and, by the  $\sigma$-order continuity of $(Y,\lVert\cdot\rVert_Y)$, we have $Tx_{n_k}\unc y$ in $Y$. 
Moreover, since $T$ is $p$-bounded then $Tx_n$ is $p$-bounded $(Y,\lvert\cdot\rvert,Y)$ or order bounded in $Y$, and so we get $Tx_{n_k}\xrightarrow{\lVert\cdot\rVert_Y} y$. Therefore, $T$ is $GAM$-compact.
\end{proof}

\end{document}